\documentclass[10pt]{amsart}
\usepackage{amssymb,amsmath,amsopn,textcomp, thmtools}
\usepackage[foot]{amsaddr}
\usepackage{aliascnt}
\usepackage{mathrsfs} 
\pagestyle{plain}
\setlength{\oddsidemargin}{1.0cm} \setlength{\evensidemargin}{1.0cm}
\setlength{\textwidth}{14.7cm} \setlength{\abovedisplayskip}{0.0cm}
\setlength{\belowdisplayskip}{0.0cm}
\theoremstyle{plain}
\newtheorem{theorem}{Theorem}[section]
\newtheorem{defi}[theorem]{Definition}

\newtheorem{lemma}[theorem]{Lemma}

\newtheorem{corollary}[theorem]{Corollary}
\newtheorem{proposition}[theorem]{Proposition}
\theoremstyle{definition}
\newtheorem{remark}[theorem]{Remark}
\newtheorem{example}[theorem]{Example}

\usepackage{bbm}
\newcommand{\R}{\mathbb{R}}
\newcommand{\N}{\mathbb{N}}

\usepackage{extarrows}
\usepackage{dsfont}
\usepackage{tikz,subcaption} 

\usepackage{mathabx} 

\allowdisplaybreaks

\begin{document}

\title{Entropy-information inequalities under  curvature-dimension conditions for continuous-time Markov chains}
\date{\today}
\author{Frederic Weber}
\email{frederic.weber@uni-ulm.de}
\thanks{The author is supported by a PhD-scholarship of the ``Studienstiftung des
deutschen Volkes'', Germany.}
\address[Frederic Weber]{Institut f\"ur Angewandte Analysis, Universit\"at Ulm, Helmholtzstra\ss{}e 18, 89081 Ulm, Germany.}


\begin{abstract}
In the setting of reversible continuous-time Markov chains, the $CD_\Upsilon$ condition has been shown recently to be a consistent analogue to the Bakry-\'Emery  condition in the diffusive setting in terms of proving Li-Yau inequalities under a finite dimension term and proving the modified logarithmic Sobolev inequality  under a positive curvature bound. In this article we examine the case where both is given, a finite dimension term and a positive curvature bound. For this purpose we introduce the $CD_\Upsilon(\kappa,F)$ condition, where the dimension term is expressed by a so called $CD$-function $F$. We derive functional inequalities relating the entropy to the Fisher information, which we will call entropy-information inequalities. Further, we deduce applications of entropy-information inequalities such as ultracontractivity bounds, exponential integrability of Lipschitz functions, finite diameter bounds and a modified version of the celebrated Nash inequality.
\end{abstract}

\maketitle

\bigskip
\noindent \textbf{Keywords:} Markov chain,  curvature-dimension inequalities, entropy, Fisher information,
ultracontractive bounds, exponential integrability of Lipschitz functions, diameter bounds, modified Nash inequality
 
 \noindent \textbf{MSC(2020)}: 60J27 (primary), 47D07, 39A12 (secondary).

\section{Introduction}
\subsection{The curvature-dimension condition of Bakry-\'Emery}
The origins of the $\Gamma$-calculus of Bakry and \'Emery date back to the seminal work \cite{BaEm}. Meanwhile, this theory, for which the monograph \cite{BGL} is an excellent source, has been proven itself as a beautiful link between probability theory, geometry and analysis.

For motivational purposes we briefly survey the setting  of the Bakry-\'Emery theory in the sequel. Denoting by $L$ the infinitesimal generator of a Markov semigroup, the carr\'e-du-champ operator $\Gamma$ and the iterated carr\'e-du-champ operator $\Gamma_2$ are defined as
\begin{equation*}
\begin{split}
\Gamma(f,g)&= \frac{1}{2}\big( L(fg) -f \,Lg - g \,L f\big),\\
\Gamma_2(f,g)&= \frac{1}{2}\big( L \Gamma(f,g)-\Gamma(f,Lg)-\Gamma(g,Lf)\big)
\end{split}
\end{equation*}
for $f$ and $g$ lying in a suitable algebra of real-valued functions. One defines $\Gamma(f):=\Gamma(f,f)$ and $\Gamma_2(f):=\Gamma_2(f,f)$. Given a fixed invariant and reversible measure $\mu$ for the semigroup generated by $L$, the operator $L$ is said to satisfy the curvature-dimension condition $CD(\kappa,n)$ for $\kappa\in \R$ and $n\in [1,\infty]$, if 
\begin{equation}\label{eq:classicCDcondition}
\Gamma_2(f)\geq \kappa \Gamma(f) + \frac{1}{n}\big( L f\big)^2, \, \mu \mbox{-a.e.},
\end{equation}
holds in a sufficiently rich class of functions. 

The classical theory is based on the key assumption that the chain rule
\begin{equation}\label{eq:classicaldiffusionproperty}
L H(f) = H'(f) L f + H''(f) \Gamma(f)
\end{equation} 
is satisfied for every $H \in C^2(\R)$ and $f$ lying in a suitable class of functions.  Typical examples  for operators fitting to the abstract framework of \cite{BGL}, in particular satisfying \eqref{eq:classicaldiffusionproperty}, are given by second-order differential operators with smooth coefficients and without zero-th order term. For instance, consider the Laplace-Beltrami operator $L=\Delta_g$ on a Riemannian manifold $(M,g)$ with invariant and reversible measure given by the canonical Riemannian measure $\mu_g$. In this case one can show by means of the Bochner-Lichnerowicz formula that $CD(\kappa,n)$ is equivalent to $\mathrm{Ric}_g(x) \geq \kappa g(x)$ for almost every $x \in M$ and $\mathrm{dim}\, M \leq n$, where $\mathrm{Ric}_g$ denotes the Ricci-curvature tensor and $\mathrm{dim}\, M$ the topological dimension of the manifold $M$. In this sense, calling \eqref{eq:classicCDcondition} a curvature-dimension condition is well motivated.

The curvature-dimension inequality \eqref{eq:classicCDcondition} serves as a powerful tool to establish various functional inequalities. In fact, if $CD(\kappa,\infty)$ holds with $\kappa>0$, then the spectral gap inequality and the logarithmic Sobolev inequality are both satisfied with constant $\kappa$ (see \cite[Chapter 4 and 5]{BGL}). In the case that $CD(0,n)$ holds for $n<\infty$, one can deduce the Li-Yau inequality, which in turn leads to the parabolic Harnack inequality, cf. \cite[Chapter 6]{BGL}. Our main interest lies in the strongest case of having both, a positive curvature and a finite dimension term. Assuming that  $CD(\kappa,n)$ holds with $\kappa>0$ and $n<\infty$, one obtains  Sobolev inequalities or, equivalently, the logarithmic entropy-energy inequality, which reads as
\begin{equation}\label{eq:classicallogEE}
\mathrm{Ent}_\mu(f^2)\leq \frac{n}{2} \log\Big(1+ \frac{4}{\kappa n} \mathcal{E}(f)\Big),
\end{equation} 
for $f$ being in a sufficiently large class of functions with $\int_X f^2 \mathrm{d}\mu =1$ (where $\mathrm{Ent}_\mu$ denotes the Boltzmann entropy and $\mathcal{E}$ the Dirichlet energy on $L^2(\mu)$ associated with $L$ and $\mu$). The functional inequality \eqref{eq:classicallogEE} is an important instance of what is called  an entropy-energy inequality, that is
\begin{equation}\label{eq:EntropieEnergie}
\mathrm{Ent}_\mu(f^2) \leq \Phi \big( \mathcal{E}(f)\big),
\end{equation}
where $\Phi:(0,\infty)\to \R$ is a strictly increasing and concave $C^1$-function. We refer to \cite[Chapter 7]{BGL}, where applications of entropy-energy inequalities, such as ultracontractivity or diameter bounds, have been discussed.

\subsection{Existing approaches for finding substitutes for curvature and dimension in the discrete setting}
The issue of finding suitable substitutes of Ricci curvature lower bounds in the discrete setting has been a very vibrant topic of research in the last decade and a half, see e.g. the recent book \cite{ModCur}.
 
Based on the powerful approach of optimal transport, for which we refer to the seminal works \cite{LoVi,Stu1,Stu2,Vil}, Erbar and Maas successfully developed the theory of entropic Ricci curvature in the context of finite Markov chains in \cite{ErMa} and \cite{Maa}. Another important notion of discrete curvature that relies on ideas from optimal transport  is due to Ollivier (see \cite{Oll}). The latter curvature notion has been studied in a variety of articles concerning the case that the underlying state space is given by  a locally finite graph, see e.g. \cite{JoLi,MuWo}. 

With regard to the Bakry-\'Emery approach, it is apparently still possible to define the operators $\Gamma$ and $\Gamma_2$ in the discrete setting, where $L$ now denotes the generator of a Markov chain. However, even though some positive results such as eigenvalue estimates in \cite{LiYa} or diameter bounds in \cite{LMP} can be deduced, the Bakry-\'Emery condition is not as applicable as in the continuous setting, in particular with regard to Li-Yau inequalities and (modified) logarithmic Sobolev inequalities. This is caused especially by  the major difficulty that the diffusion property \eqref{eq:classicaldiffusionproperty} does not hold in the discrete setting. There are several modified versions of curvature-dimension inequalities in the discrete setting which are based on the approach of identifying certain discrete substitutes for the chain rule, e.g. in the context of Li-Yau inequalities we refer to \cite{Harvard,DKZ,Mn1}. In particular, in \cite{DKZ} the identity
\begin{equation}\label{eq:FFI}
L(\log f)=\frac{L f}{f} - \Psi_\Upsilon(\log f)
\end{equation}
has been used as the appropriate replacement for the case of $H=\log$ in \eqref{eq:classicaldiffusionproperty}. Here the operator $\Psi_\Upsilon$ is defined as in \eqref{eq:PsiH} below, with $H=\Upsilon$, where  $\Upsilon:\R\to \R$ is given by $\Upsilon(r)= e^r-r-1$, $r\in \R$. We will comment on regularity assumptions for $f$ ensuring the validity of \eqref{eq:FFI} in the next subsection. One of the key ideas of \cite{DKZ} is to make use of so called $CD$-functions in order to express the dimension term in their CD condition. This in fact leads to significantly improved estimates regarding the corresponding Li-Yau inequalities, which are even sharp in some instances. We will follow the approach of using $CD$-functions in this article as well.

Regarding positive curvature bounds, based on the identity \eqref{eq:FFI}, it has been shown very recently in \cite{WZ} that the $CD_\Upsilon(\kappa,\infty)$ condition serves as a consistent analogue to the classical curvature-dimension condition with regard to the strategy of proving entropy decay of an exponential rate using the entropy method. The resulting functional inequality, the modified logarithmic Sobolev inequality, holds with constant $\kappa>0$ provided that $CD_\Upsilon(\kappa,\infty)$ is satisfied (see \cite[Corollary 3.5]{WZ}). In the sense of the relation between curvature-dimension inequalities and related functional inequalities, the modified logarithmic Sobolev inequality with regard to the $CD_\Upsilon(\kappa,\infty)$ condition in the discrete setting serves as the appropriate counterpart to the logarithmic Sobolev inequality with regard to the $CD(\kappa,\infty)$ condition in the diffusive setting.

Moreover, we refer to the discussion in \cite[Remark 2.9]{WZ} that shows that the $CD_\Upsilon$ condition with finite dimension term  is strongly related to the articles \cite{Mn1} and \cite{DKZ} (see also Remark \ref{CDdiscussionremark} below) and in particular implies Li-Yau type inequalities. In this sense, the $CD_\Upsilon$ condition serves as a suitable analogue to the Bakry-\'Emery condition with regard to both, positive curvature in terms of the entropy method and finite dimension in terms of Li-Yau inequalities. The main motivation of this paper is to  identify the appropriate discrete counterpart to the logarithmic entropy-energy inequality \eqref{eq:classicallogEE} with regard to the $CD(\kappa,n)$ condition in the diffusive setting, or in other words, to investigate the case where both is satisfied, a positive curvature bound and a finite dimension term.
\subsection{Setting and main results}
We consider a time-homogeneous, continuous-time Markov chain $\big( Z_t\big)_{t \geq 0}$ defined on a probability space $\big(\Omega, \mathcal{F},\mathbb{P}\big)$ and  with  (finite or infinite) countable state space $X$. The generator $L$ of the Markov chain  is defined on a suitable class of functions $f:X \to \R$  by
\begin{equation}\label{eq:generator}
L f(x) = \sum_{y \in X} k(x,y)f(y)= \sum_{y \in X} k(x,y) \big( f(y)-f(x)\big).
\end{equation}
Here, we assume $\sum_{y \in X}k(x,y) = 0$, where $k(x,y)\geq 0$ denotes the transition rate for jumping from $x$ to $y$ if $x\neq y$. We remark that $L$ determines naturally a graph structure with vertex set $X$ and edge weights  given by $k(x,y)$ for $x,y \in X$, $x\neq y$, to which we will refer as the underlying graph to $L$. If $k(x,y) \in \{0,1\}$ for any $x,y \in X$ with $x \neq y$, then the underlying graph to $L$ is an unweighted graph.

 We denote the associated (sub-)Markov semigroup on the space of bounded functions by $\big(P_t\big)_{t\geq 0}$, which is given by
\begin{equation}\label{SubMarkov}
P_t f(x) = \mathbb{E}(f(Z_t)|Z_0=x\big).
\end{equation}

Further, we suppose that the Markov chain is irreducible and that a unique invariant measure $\mu$ exists such that the detailed balance condition
\begin{equation}\label{eq:dbcondition}
\mu(\{x\})k(x,y)=\mu(\{y\})k(y,x)
\end{equation}
is valid for any $x,y \in X$. Let $\pi:X\to (0,\infty)$ denote the density for $\mu$ with respect to the counting measure on $X$, i.e. $\mathrm{d}\mu = \pi \mathrm{d}\#$. It is a basic consequence of irreducibility and reversibility that $\pi(x)>0$ for any $x \in X$.

It is well known that the Markov chain is  positive recurrent if and only if $\mu$ is finite (and hence can be assumed to be a probability measure) and the Markov chain is non-explosive (see e.g. \cite{Nor}). In particular, provided that the Markov chain is positive recurrent, stochastic completeness (that is $P_t \mathds{1}=\mathds{1}$) and ergodicity (by which we mean what is sometimes called ordinary ergodicity, see e.g. \cite{And}) hold true. In the recent work \cite{WZ}, which is strongly related to this article, positive recurrence has been an important assumption. If one allows for the Markov chain being explosive, then the semigroup given by \eqref{SubMarkov} is only submarkovian, which ensures still that $P_t f$ is bounded provided that $f$ is bounded. For more details on the general theory of continuous-time Markov chains we refer the reader to \cite{And} and \cite{Nor}. 

We denote by $\R^X$ the space of real-valued functions on $X$ and by $\ell^p(\mu)$, $1\leq p < \infty$, the elements of $\R^X$ that are $p$-summable with respect to $\mu$. Further, $\ell^\infty(X)$ denotes the space of bounded real-valued functions on $X$. Throughout this article the mapping $\Vert \cdot \Vert_p : \ell^p(\mu)\to [0,\infty)$, $1\leq p< \infty$, denotes the $\ell^p(\mu)$-norm.

Moreover, we denote by $\mathcal{P}(X)$ the set of probability densities with respect to $\mu$, by $\mathcal{P}_*(X)$ the set of  elements in $\mathcal{P}(X)$ that are strictly positive at any $x \in X$ and $P_*^+(X):= \mathcal{P}_*(X) \cap \ell^{\infty,+}(X)$, where
\begin{equation*}
\ell^{\infty,+}(X)=\{f \in \ell^{\infty}(X): \exists c>0 \text{ such that } f(x) \geq c, \forall x \in X\}.
\end{equation*}

We assume throughout this paper that at any $x \in X$
\begin{equation}\label{eq:M1assumption}
M_1(x):=\sum_{y \in X \setminus \{x\}} k(x,y) <\infty
\end{equation}
and
\begin{equation}\label{eq:M2assumption}
M_2(x):= \sum_{y \in X \setminus \{x\}}k(x,y)\sum_{z \in X \setminus\{y\}}k(y,z) < \infty.
\end{equation}
Further, we define 
$
M_{1,\inf}:= \inf_{x \in X} M_1(x)\in [0,\infty)
$
and 
$
M_{1,\sup}:= \sup_{x \in X} M_1(x) \in (0,\infty] 
$ and introduce
\begin{equation*}
N(x):= \sum_{y \in X \setminus \{x \}} k(x,y) k(y,x) \leq M_2(x)<\infty.
\end{equation*}

We recall from \cite{WZ} the definition of the operators $\Psi_H$ and $\Psi_{2,H}$, where $H:\R \to \R$ is a continuous mapping. We define
\begin{equation}\label{eq:PsiH}
\Psi_H(f)(x) = \sum_{y \in X} k(x,y) H(f(y)-f(x)),\, x \in X,
\end{equation}
for any $f \in \ell^\infty(X)$ and  
\begin{equation}\label{eq:BHDefinition}
B_{H}(f,g)(x)= \sum_{y \in X} k(x,y) H(f(y)-f(x))(g(y)-g(x)), \, x \in X,
\end{equation}
for suitable functions $f$ and $g$. In particular, the conditions \eqref{eq:M1assumption} and \eqref{eq:M2assumption} ensure that we can choose in \eqref{eq:BHDefinition} $g=L f$ for $f \in \ell^\infty(X)$. This guarantees that for $f\in \ell^\infty(X)$ and $x \in X$ the operator 
\begin{equation*}
\Psi_{2,H}(f)(x)= \frac{1}{2}\big( L \Psi_H(f)(x) - B_{H'}(f,Lf)(x)\big)
\end{equation*}
is well defined. In the case of $H(r)=\frac{1}{2}r^2$, $\Psi_H(f)$ coincides with $\Gamma(f)$ and $\Psi_{2,H}(f)$ with $\Gamma_2(f)$. For our purposes, the mapping $\Upsilon(r)=e^r-r-1$, $r\in \R$, will play a key role. Indeed, the choice of $H(r)=\Upsilon(r)$, which is motivated by the identity \eqref{eq:FFI}, leads to the operators $\Psi_\Upsilon(f)$ and $\Psi_{2,\Upsilon}(f)$, which are the central objects of investigation in the recent article \cite{WZ}. Let us remark that in our setting the identity \eqref{eq:FFI} holds true for any $f \in \R^X$ such that $f,\log f \in \ell^1(k(x,\cdot))$ for any $x \in X$ (cf. \cite[Lemma 2.2]{WZ}), which is for instance the case when $f\in \ell^{\infty,+}(X)$.

We recall a representation formula for the operator $\Psi_{2,\Upsilon}$, which has been used frequently in order to study a large class of examples in \cite{WZ}, and reads as
\begin{equation}\label{eq:Psi2Formel}
\begin{split}
2\Psi_{2,\Upsilon}(f)(x)&=\sum_{y\in X\setminus\{x\}} k(x,y) \sum_{z\in X} k(y,z)\Big(\Upsilon\big(f(z)-f(y)\big)-
\Upsilon'\big(f(y)-f(x)\big)\big(f(z)-f(y)\big)\Big)\\
& \quad\; +\sum_{y\in X\setminus\{x\}} k(x,y) \Upsilon'\big(f(y)-f(x)\big)\sum_{z\in X} k(x,z)\big(f(z)-f(x)\big)\\
& \quad\; - \sum_{y\in X\setminus\{x\}} k(x,y) \sum_{z\in X} k(x,z)\Upsilon\big(f(z)-f(x)\big).
\end{split}
\end{equation}

The detailed balance condition \eqref{eq:dbcondition} ensures that the generator of the Dirichlet form given by
\begin{equation}\label{Dirichletform}
\mathcal{E}(f,g) = \frac{1}{2}\sum_{x \in X}\sum_{y \in X} k(x,y) \big(f(y)-f(x)\big)\big(g(y)-g(x)\big) \pi(x)
\end{equation}
for $f,g$ being suitable functions, coincides with $L$ given by \eqref{eq:generator} on bounded functions which are contained in the domain of the form generator, see \cite{KeLe}. We will also denote the $\ell^2(\mu)$ operator by $L$ in the sequel. Further, as the corresponding $\ell^2(\mu)$-semigroup generated by $L$ is an extension of the semigroup given by \eqref{SubMarkov} restricted to $\ell^\infty(X) \cap \ell^2(\mu)$, we will also use the notation $(P_t)_{t \geq 0}$ for the corresponding $\ell^2(\mu)$-semigroup. 

An eminent role will be played by the entropy, being given as
\begin{equation*}
\mathrm{Ent}_\mu(f)= \int_X f \log f \mathrm{d}\mu - \int_X f \mathrm{d}\mu \,\log \int_X f \mathrm{d}\mu,
\end{equation*}
and the Fisher information $\mathcal{I}(f)=\mathcal{E}(f,\log f)$.  We refer to the beginning of Section \ref{sec:EIinequalities} for more details on some elementary properties of the entropy resp. the Fisher information and on corresponding admissible functions.

We say that $L$ satisfies $CD_\Upsilon(\kappa,F)$ if
\begin{equation*}
\Psi_{2,\Upsilon}(f) \geq \kappa \Psi_\Upsilon(f) + F_0\big( - L f\big)
\end{equation*}
holds on $X$ for any $f \in \ell^\infty(X)$, where $\kappa \in \R$ and $F_0:\R\to[0,\infty)$ is the trivial extension of a $CD$-function $F:[0,\infty) \to [0,\infty)$ (see Definition \ref{def:CDfunction} below), i.e. $F_0(r)=0$ if $r<0$. Note that the notation of the condition $CD_\Upsilon(\kappa,\infty)$, which states that $\Psi_{2,\Upsilon}(f)\geq \kappa \Psi_\Upsilon(f)$ holds on $X$ for any $f\in \ell^\infty(X)$, is a bit missleading since it really means that the dimension term vanishes. This terminology is clearly motivated from the case  of the quadratic $CD$-functions $F(r)=\frac{r^2}{n}$, $n \in [1,\infty)$, to which we will refer as the $CD_\Upsilon(\kappa,n)$ condition (motivated by the classical Bakry-\'Emery notation). Note that $CD_\Upsilon(\kappa,n)$ has already been mentioned  in slightly different form in \cite[Remark 2.9]{WZ} and also in \cite{Mn1} in a rather implicit form (cf. \cite[Section 9]{WZ}). 

We now describe our main results. Assuming that the $CD$-function is convex, continuously differentiable and such that the mapping $r\mapsto \frac{F(r)}{r^{1+\delta}}$  is increasing on $(0,\infty)$ for some $\delta>0$, we will be able to show in Theorem \ref{maininequalityunderCD} that $CD_\Upsilon(\kappa,F)$  (with $\kappa>0$) implies the bound
\begin{equation}\label{eq:introductmaininequality}
\mathrm{Ent}_\mu(f) \leq \int_0^\infty G \Big( \frac{\kappa}{e^{2\delta\kappa t} \big( 1+ \frac{\kappa \mathcal{I}(f)}{F(\mathcal{I}(f))}\big) - 1}\Big)\mathrm{d}t
\end{equation}
for any $f \in P_*(X)$ with $\mathrm{Ent}_\mu(f)<\infty$ and $\mathcal{I}(f)\in(0,\infty)$, where $G:(0,\infty)\to (0,\infty)$ denotes the inverse function of the mapping $r \mapsto \frac{F(r)}{r}$, $r>0$. In particular, in the case of $CD_\Upsilon(\kappa,n)$ with $\kappa>0$ and $n<\infty$, \eqref{eq:introductmaininequality} reads as
\begin{equation}\label{eq:intruductloginequality}
\mathrm{Ent}_\mu(f) \leq \frac{n}{2}\log \Big( 1+ \frac{\mathcal{I}(f)}{\kappa n}\Big),
\end{equation}
see Corollary \ref{logEIunderCD(kappa,n)}. Consequently, \eqref{eq:intruductloginequality} with regard to $CD_\Upsilon(\kappa,n)$ is the natural discrete analogue to \eqref{eq:classicallogEE} with regard to $CD(\kappa,n)$ in the diffusive setting. Note that in the diffusive setting the chain rule implies $\mathcal{I}(f^2)=4\mathcal{E}(f)$ for suitable functions, which yields that in the classical situation the inequalities \eqref{eq:classicallogEE} and \eqref{eq:intruductloginequality} coincide. In particular, \eqref{eq:intruductloginequality} is an important example of what will be called an entropy-information inequality, i.e. a functional inequality of the form $\mathrm{Ent}_\mu(f)\leq \Phi(\mathcal{I}(f))$ for a strictly increasing and concave $C^1\big((0,\infty)\big)$-function $\Phi$, to which we will refer as the growth function.

As the modified logarithmic Sobolev inequality differs from the logarithmic Sobolev inequality in the discrete setting, hypercontractivity of the semigroup, which is equivalent to the latter also in the discrete setting (see \cite{DSC96}), does not characterize the modified logarithmic Sobolev inequality. In \cite{BoTe}, Bobkov and Tetali established a hypercontractivity formulation for $e^{P_t f}$ being suitable for the modified logarithmic Sobolev inequality. In this spirit, we show in Theorem \ref{ultracontractivityresult} that certain ultracontractivity bounds for $e^{P_t f}$ hold under corresponding entropy-information inequalities. In particular, in case of the $CD_\Upsilon(\kappa,n)$ condition with $\kappa>0$ and $n<\infty$, we will be able to derive that
\begin{equation*}
\Vert e^{P_t f} \Vert_\infty \leq \Big( 1+ \frac{1}{2\kappa t}\Big)^\frac{n}{2} \Vert e^f \Vert_1 
\end{equation*}
holds for any $f\in \ell^\infty(X)$ and any $t>0$. 

In Theorem \ref{expintegrabilityforLipschitz} we prove that $\mu$-integrable $1$-Lipschitz functions, by which we mean that $\Vert \Gamma(f)\Vert_\infty \leq 1$, are exponentially integrable. Moreover, provided that the growth function satisfies $\int_0^\infty \frac{\Phi(s^2)}{s^2}\mathrm{d}s<\infty$, we show
that
\begin{equation*}
\Vert f - \int_X f \mathrm{d}\mu \Vert_\infty \leq \int_0^\infty \frac{\Phi(s^2)}{s^2}\mathrm{d}s
\end{equation*}
holds for any $1$-Lipschitz function $f$. This in turn implies
a finite diameter bound, which  in the special case of $CD_\Upsilon(\kappa,n)$ with $\kappa>0$ and $n<\infty$ reads as
\begin{equation}\label{eq:introductCDdiametrbound}
\mathrm{diam}_\varrho \leq \pi \sqrt{\frac{n}{\kappa}},
\end{equation}
see Corollary \ref{Diameterbound}. Interestingly, \eqref{eq:introductCDdiametrbound} coincides  with the diameter bound that has been obtained by Liu, M\"unch and Peyerimhoff in \cite{LMP} for the weaker $CD(\kappa,n)$ condition but under assumptions on the underlying graph to $L$ which can be expected to be more restrictive, see Remark \ref{rem:LMPcomparision}.

Finally, we also show that a new modified version of the celebrated Nash inequality holds under an entropy-information inequality with logarithmic growth function. In the particular case of the $CD_\Upsilon(\kappa,n)$ condition with $\kappa>0$ and $n<\infty$, this modified Nash inequality says that
\begin{equation*}
\Vert f \Vert_2^{n+2} \leq \Big( \Vert f \Vert_2^2 + \frac{\mathcal{I}(f^2)}{\kappa n}\Big)^{\frac{n}{2}} \Vert f \Vert_1^2
\end{equation*}
holds for any non-vanishing $f \in \ell^2(\mu)$, see Theorem \ref{modNash} and Corollary \ref{modNashunderCD}.

The article is organized as follows. In Section \ref{sec:CDcondition} we introduce the curvature-dimension condition $CD_\Upsilon(\kappa,F)$ and discuss several examples. Next, we define the notion of entropy-information inequalities and investigate their relation to the $CD_\Upsilon(\kappa,F)$ condition in the case of power-type $CD$-functions. In the remaining part of the paper, we discuss applications of entropy-information inequalities and hence also of the $CD_\Upsilon(\kappa,F)$ condition. We derive ultracontractive bounds for $e^{P_t f}$ in Section \ref{sec:ultracontractive}, exponential integrability of Lipschitz functions and diameter bounds in  Section \ref{sec:diameter} and, finally, a modified version of the  Nash inequality in Section \ref{sec:Nash}.

\section{The $CD_\Upsilon$ condition with finite dimension and some examples}\label{sec:CDcondition}
In this section we generalize the $CD_\Upsilon(\kappa,\infty)$ condition from the very recent work \cite{WZ} by adding a general dimension term involving a $CD$-function. We first recall the notion of a $CD$-function that originates from the work of \cite{DKZ} and also has been mentioned in \cite[Remark 2.9]{WZ}.
\begin{defi}\label{def:CDfunction}
A continuous function $F:[0,\infty) \to [0,\infty)$ is called $CD$-function, if $F(0)=0$, $r\mapsto \frac{F(r)}{r}$ is strictly increasing on $(0,\infty)$ and $\frac{1}{F}$ is integrable at $\infty$. 
\end{defi}
For a given $CD$-function $F$, we will call the function $F_0:\R\to [0,\infty)$ given by $F_0(r)=F(r)$ if $r\geq 0$ and $F_0(r)=0$ otherwise the trivial extension of $F$.
\begin{remark}\label{remarkCDfunctions}
If a function $F:[0,\infty)\to[0,\infty)$ with $F(0)=0$ is strictly convex on $(0,\infty)$, the mapping $r\mapsto \frac{F(r)}{r}$ is strictly increasing on $(0,\infty)$, cf. \cite[Remark 3.3]{DKZ}. However, it can not be deduced in general that $F$ is a $CD$-function as, for instance, the function $r \mapsto \Upsilon(-r)$, $r \in [0,\infty)$, serves as a counterexample. 

If we impose instead that $F(0)=0$ and $r \mapsto \frac{F(r)}{r^{1+\delta}}$ is increasing on $(0,\infty)$ for some $\delta>0$, as it will be done in Theorem \ref{maininequalityunderCD} (cf. Remark \ref{rem:CDfunctionassumption}), then it follows that $F$ is a $CD$-function. Indeed, it is obvious that $r\mapsto \frac{F(r)}{r}$ is strictly increasing on $(0,\infty)$ and further we have
\begin{equation*}
\int_c^\infty \frac{1}{F(r)}\mathrm{d}r \leq \frac{c^{1+\delta}}{F(c)}\int_c^\infty r^{-(1+\delta)}\mathrm{d}r <\infty,
\end{equation*}
where $c>0$. 
\end{remark}
\begin{defi}\label{definitionCDcondition}
We say that the Markov generator $L$ satisfies $CD_\Upsilon(\kappa,F)$ at $x\in X$ for $\kappa \in \R$ and a $CD$-function $F: [0,\infty) \to [0,\infty)$ with trivial extension $F_0$ if
\begin{equation}\label{eq:definitionCD}
\Psi_{2,\Upsilon}(f)(x) \geq \kappa \Psi_\Upsilon(f)(x) + F_0 \big( - L f (x)\big)
\end{equation}
holds for all $f\in \ell^\infty(X)$. If $L$ satisfies $CD_\Upsilon(\kappa,F)$ at any $x \in X$, then we say that $L$ satisfies $CD_\Upsilon(\kappa,F)$.\\
In the special case of $F(r)=\frac{1}{n}r^2$, $r \geq 0$, for some $n \in [1,\infty)$, we say that $L$ satisfies $CD_\Upsilon(\kappa,n)$.
\end{defi}
According to the ambiguity in the notation, we emphasize that throughout this article a capital letter $F$ in the condition $CD_\Upsilon(\kappa,F)$  always refers to a $CD$-function, while a small letter $n$ in the condition $CD_\Upsilon(\kappa,n)$ refers to the constant of a quadratic $CD$-function.

\begin{remark}\label{CDdiscussionremark}(i) It must be pointed out that the condition $CD_\Upsilon(\kappa,n)$ has been introduced in \cite[Remark 2.9]{WZ} in seemingly stricter form, as we only require the dimension term in Definition \ref{definitionCDcondition} for functions $f \in \ell^\infty(X)$ with $-L f(x)>0$. Further, the latter condition is the only difference of $CD_\Upsilon(\kappa,n)$ and the condition $CD\psi(n,\kappa)$ for the specific choice of $\psi=\log$, which has been introduced in the case of finite and unweighted graphs in \cite{Mn1}. We refer to \cite[Section 9]{WZ} for a detailed account on the relation of the operators appearing in \eqref{eq:definitionCD} and those in \cite{Mn1}. The condition $-L f(x)>0$ also appears in the formulation of other curvature-dimension conditions, such as for instance in the case of the exponential curvature-dimension condition of \cite{Harvard} and the $CD(F;0)$ condition in \cite{DKZ}, where $F$ denotes a $CD$-function. We refer to \cite[Remark 2.9(iii)]{WZ}, which shows that the condition $CD_\Upsilon(0,F)$ suffices to deduce Li-Yau inequalities using the results of \cite{DKZ}. 

(ii) Importantly, the $CD_\Upsilon(\kappa,n)$ condition (or more generally $CD_\Upsilon(\kappa,F)$ with $F(r) \sim \frac{1}{n}r^2$ as $r \to 0^+$) implies the Bakry-\'Emery condition $CD(\kappa,n)$. This fact relies on the identities  
\begin{equation}\label{eq:scalingofPsis}
\lim\limits_{\lambda \to 0}\frac{\Psi_{2,\Upsilon}(\lambda f)}{\lambda^2}=\Gamma_2(f),\, \lim\limits_{\lambda \to 0}\frac{\Psi_{\Upsilon}(\lambda f)}{\lambda^2}= \Gamma(f),
\end{equation}
holding true for any $f \in \ell^\infty(X)$, which has been shown in the proof of \cite[Proposition 2.11]{WZ}. As it has been pointed out in \cite[Remark 2.13]{WZ}, the procedure extends easily to a quadratic dimension term. More accurately, in the formulation of Definition \ref{definitionCDcondition}, it first implies $CD(\kappa,n)$ only for $f\in \ell^\infty(X)$ with $-L f (x)>0$, but then extends to any $f\in \ell^\infty(X)$ by linearity of $L$, bilinearity of $\Gamma$ and the definition of $\Gamma_2$. The fact that the Bakry-\'Emery condition is necessary for $CD_\Upsilon$ with a quadratic $CD$-function also motivates to study the former condition. In particular, we refer to \cite{SWZ1}, where $CD(0,n)$ has been studied for a large class of operators with long range jumps and state space $\mathbb{Z}$.

(iii) The property \eqref{eq:scalingofPsis} has an important consequence for $CD$-functions that behave like a power-type function near the origin. Indeed, if  $CD_\Upsilon(\kappa,F)$ holds with $\kappa \in \R$ and $F(r)\sim r^{1+\delta}$ as $r\to 0^+$ for some $\delta>0$, then we infer from \eqref{eq:scalingofPsis} that for any $f\in \ell^\infty(X)$ with $-L f(x)>0$, $x \in X$, we have at $x$
\begin{align*}
\Gamma_2(f)&=\lim\limits_{\lambda \to 0^+}\frac{\Psi_{2,\Upsilon}(\lambda f)}{\lambda^2}\\ 
&\geq \lim\limits_{\lambda \to 0^+} \frac{\kappa \Psi_\Upsilon(\lambda f) + F(-\lambda L f)}{\lambda^2}\\
&= \kappa \Gamma(f) + (-L f)^{1+\delta}\lim\limits_{\lambda \to 0^+} \frac{F(-\lambda L f)}{(-\lambda L f)^{1+\delta}} \lambda^{1+\delta-2}.
\end{align*}
Consequently, $\delta \geq 1$ must hold, or in other words the best behavior of a $CD$-function near the origin one can hope for is quadratic.
\end{remark}
Several concrete examples have been considered in \cite[Section 5]{WZ} to study the $CD_\Upsilon(\kappa,\infty)$ condition. It has turned out that the functions
\begin{equation}\label{eq:nucdfunctions}
\nu_{c,d}(r)=c\Upsilon'(r)r + \Upsilon(-r)-d \Upsilon(r), \, r \in \R,
\end{equation} 
with constants $c,d \in \R$, are of eminent importance. We refer the reader to the Appendix of \cite{WZ}, where basic properties of these functions have been collected.

As a warm-up, we begin with the basic example of the two-point space, for which we provide another property of the functions given by \eqref{eq:nucdfunctions} in the Appendix below.
\begin{example}\label{ex:2point}
We consider the two-point space $X=\{0,1\}$ with $k(0,1)=a$ and $k(1,0)=b$, where $a,b>0$. 
Here, the invariant and reversible probability measure $\mu$ is given by $\mathrm{d}\mu=\pi \mathrm{d}\#$ with $\pi(0)=\frac{b}{a+b}$ and $\pi(1)=\frac{a}{a+b}$.
We write $\tilde{x}=1-x$ for $x\in X$ and $t=f(\tilde{x})-f(x)$. From \cite{WZ} we know that $CD_\Upsilon(\kappa,\infty)$ holds true for some $\kappa>0$. In order to show that the $CD_\Upsilon$ condition is fulfilled with a positive curvature constant and a finite dimension term it hence suffices to show $CD_\Upsilon(0,F)$, where $F$ is a $CD$-function. Thus it remains to compare $\Psi_{2,\Upsilon}(f)(x)$ with $F(-L f(x)) = F(-k(x,\tilde{x})t)$, for $F$ being specified below. We have
\begin{align*}
2\Psi_{2,\Upsilon}&(f)(x) = L\Psi_\Upsilon(f)(x)-B_{\Upsilon'}(f,Lf)(x)\\
& = k(x,\tilde{x})\Big(\big(\Psi_\Upsilon(f)(\tilde{x})-\Psi_\Upsilon(f)(x)  \big)
- \Upsilon'\big(f(\tilde{x})-f(x)  \big)\big(Lf(\tilde{x})-Lf(x)\big)\Big)\\
& = k(x,\tilde{x}) \,k(\tilde{x},x)\big(\Upsilon(-t)+\Upsilon'(t)t\big)+k(x,\tilde{x})^2\big(\Upsilon'(t)t-\Upsilon(t) \big)\\
& = k(x,\tilde{x})k(\tilde{x},x) \nu_{1+\frac{k(x,\tilde{x})}{k(\tilde{x},x)},\frac{k(x,\tilde{x})}{k(\tilde{x},x)}}(t).
\end{align*}
Note that for $c,d\in \R$ and $\eta>0$ we have that $\nu_{c+\eta,d+\eta}(r)\geq \nu_{c,d}(r)$ at any $r\in \R$, since $\Upsilon'(r)r \geq \Upsilon(r)$, $r \in \R$, holds by convexity.
Hence, we can estimate
\begin{equation*}
\Psi_{2,\Upsilon}(f)(x) \geq \frac{ab}{2}\nu_{1+\lambda,\lambda}(t),
\end{equation*}
where $\lambda:= \min\{\frac{a}{b},\frac{b}{a}\}$. Note that $\nu_{1+\lambda,\lambda}$ is strictly convex by Lemma \ref{nulambdaconvex}. Then, we infer from Remark \ref{remarkCDfunctions} and the asymptotic behavior of $\nu_{1+\lambda,\lambda}$ that the mapping $F:[0,\infty)\to[0,\infty)$ defined as
\begin{equation*}
F(r)= \frac{ab}{2}\,\nu_{1+\lambda,\lambda}\big(-\frac{r}{\max\{a,b\}}\big), \, r \geq 0,
\end{equation*}
is a $CD$-function and we deduce that $CD_\Upsilon(0,F)$ holds true.
\end{example}
We continue with the following basic observation.
\begin{proposition}\label{gammacriterion}
Let $x \in X$, $\kappa \in \R$, $\gamma: \R \to [0,\infty)$ be convex on $\R$ with $\left.\gamma\right|_{[0,\infty)}$ being a $CD$-function and $\alpha:X \to (0,\infty)$ such that 
\begin{equation}\label{eq:sufficientfordimensionalCD}
\Psi_{2,\Upsilon}(f)(x) \geq \kappa \Psi_\Upsilon(f)(x) + \alpha(x) \sum_{y \in X} k(x,y) \gamma(f(x)-f(y))
\end{equation}
holds for any $f \in \ell^\infty(X)$. Then $L$ satisfies $CD_\Upsilon(\kappa,F)$ at $x$ with $F(r)= \alpha(x) M_1(x) \gamma\big(\frac{r}{M_1(x)}\big)$, $r\geq 0$. If $0<M_{1,\inf}\leq M_{1,\sup}<\infty$, $\alpha_*:=\inf_{x \in X} \alpha(x)>0$ and  \eqref{eq:sufficientfordimensionalCD} holds for any $f \in \ell^\infty(X)$ and all $x \in X$, then $L$ satisfies $CD_\Upsilon(\kappa,F)$ with $F(r)= \alpha_* M_{1,\inf}\gamma\big(\frac{r}{M_{1,\sup}}\big)$, $r\geq 0$.
\end{proposition}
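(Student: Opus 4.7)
The plan is to deduce the bound $\Psi_{2,\Upsilon}(f)(x)\geq \kappa \Psi_\Upsilon(f)(x) + F_0(-Lf(x))$ from the hypothesis \eqref{eq:sufficientfordimensionalCD} by applying Jensen's inequality to $\gamma$. Observe that if $M_1(x)>0$, then the weights $k(x,y)/M_1(x)$ (for $y\neq x$) form a probability distribution on $X\setminus\{x\}$. Since $\gamma$ is convex on $\R$ and since $\gamma(0)=0$ (as $\left.\gamma\right|_{[0,\infty)}$ is a $CD$-function), the convention $k(x,x)=-M_1(x)$ is harmless and Jensen yields
\begin{equation*}
\gamma\Bigl(\tfrac{-Lf(x)}{M_1(x)}\Bigr)=\gamma\Bigl(\sum_{y\neq x}\tfrac{k(x,y)}{M_1(x)}\bigl(f(x)-f(y)\bigr)\Bigr)\leq \tfrac{1}{M_1(x)}\sum_{y\in X}k(x,y)\gamma\bigl(f(x)-f(y)\bigr).
\end{equation*}
(If $M_1(x)=0$, the whole claim degenerates to $0\geq 0$ and is trivial.) Combining this with \eqref{eq:sufficientfordimensionalCD} gives
\begin{equation*}
\Psi_{2,\Upsilon}(f)(x)\geq \kappa\Psi_\Upsilon(f)(x)+\alpha(x)M_1(x)\,\gamma\Bigl(\tfrac{-Lf(x)}{M_1(x)}\Bigr).
\end{equation*}
When $-Lf(x)\geq 0$ this coincides with $F_0(-Lf(x))$ for $F(r)=\alpha(x)M_1(x)\gamma(r/M_1(x))$; when $-Lf(x)<0$ the added term is still nonnegative (since $\gamma\geq 0$), so it dominates $F_0(-Lf(x))=0$.

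Next, I would verify that $F(r)=\alpha(x)M_1(x)\gamma(r/M_1(x))$ is indeed a $CD$-function in the sense of Definition \ref{def:CDfunction}. All three properties transfer from $\left.\gamma\right|_{[0,\infty)}$ under the positive affine substitution $s=r/M_1(x)$: $F(0)=0$, the map $r\mapsto F(r)/r=\alpha(x)\gamma(s)/s$ is strictly increasing, and the change of variables $r\mapsto s=r/M_1(x)$ shows integrability of $1/F$ at infinity.

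For the global statement, the task is to replace the $x$-dependent function by a single $CD$-function valid on all of $X$. Here I would use the monotonicity of $\gamma$ on $[0,\infty)$, which follows from the fact that $\gamma(r)/r$ is strictly increasing (combined with $\gamma\geq 0$ and $\gamma(0)=0$). For $-Lf(x)\geq 0$ we have $-Lf(x)/M_1(x)\geq -Lf(x)/M_{1,\sup}\geq 0$ since $M_1(x)\leq M_{1,\sup}$, so $\gamma(-Lf(x)/M_1(x))\geq \gamma(-Lf(x)/M_{1,\sup})$, and also $\alpha(x)M_1(x)\geq \alpha_*M_{1,\inf}$. Multiplying these bounds gives
\begin{equation*}
\alpha(x)M_1(x)\gamma\Bigl(\tfrac{-Lf(x)}{M_1(x)}\Bigr)\geq \alpha_*M_{1,\inf}\,\gamma\Bigl(\tfrac{-Lf(x)}{M_{1,\sup}}\Bigr)=F_0(-Lf(x)),
\end{equation*}
and the case $-Lf(x)<0$ is again trivial. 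The $CD$-function property of the new $F$ is verified exactly as before.

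There is no serious obstacle here; the content is the application of Jensen's inequality and the elementary monotonicity argument. The only subtlety worth flagging is keeping track of the sign of $-Lf(x)$ so that the trivial extension $F_0$ in the definition of $CD_\Upsilon(\kappa,F)$ is respected, and ensuring that the degenerate case $M_1(x)=0$ (if it occurs) is treated consistently.
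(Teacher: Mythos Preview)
Your proposal is correct and follows essentially the same approach as the paper: Jensen's inequality applied to the convex function $\gamma$ with the probability weights $k(x,y)/M_1(x)$ yields the first claim, and monotonicity of $\gamma$ on $[0,\infty)$ gives the global statement. You have simply spelled out more of the details (the sign case $-Lf(x)<0$, the verification that $F$ is a $CD$-function, the degenerate case $M_1(x)=0$) than the paper does.
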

\begin{proof}
For fixed $x \in X$, we observe by Jensen's inequality
\begin{equation*}
\sum_{y \in X \setminus\{x\}}k(x,y) \gamma(f(x)-f(y)) \geq M_1(x) \gamma\Big(-\frac{L f(x)}{M_1(x)}\Big),
\end{equation*}
from which the first claim follows by \eqref{eq:sufficientfordimensionalCD}. Clearly, $\gamma$ is increasing on $[0,\infty)$, which implies the second claim. 
\end{proof}
Note that the tempting naive approach to deduce a $CD_\Upsilon$ condition with non-negative curvature bound and finite dimension term from $CD_\Upsilon(\kappa,\infty)$ (with $\kappa>0$) alone by using Proposition \ref{gammacriterion} does not work. Indeed, the mapping $r\mapsto \Upsilon(-r)$ can not play the role of the function $\gamma$ from Proposition \ref{gammacriterion} since it only grows linearly at $\infty$ and is hence not a $CD$-function. This is a difference to the Bakry-\'Emery condition in our setting, where the analogous result to Proposition \ref{gammacriterion} yields at least that $CD(\kappa,\infty)$(with $\kappa>0$) at $x\in X$ implies $CD(\lambda \kappa,(1-\lambda)n)$ at $x\in X$, where  $\lambda \in [0,1]$ and $n<\infty$ depends on $\kappa$ and $x$. For our purposes however, we need a refined analysis.

In light of the subsequent sections, power-type $CD$-functions are of particular importance. We note that for any $\delta\geq 1$ there exists some optimal $c_\delta>0$ such that the estimate
\begin{equation}\label{eq:Palmesqaureestimate}
\Upsilon(r)+\Upsilon(-r)\geq c_\delta |r|^{1+\delta}
\end{equation}
holds true for any $r \in \R$, which follows from the asymptotic and monotonic  behavior of the mapping $r \mapsto \Upsilon(r)+\Upsilon(-r)$, $r\in \R$, and the fact that $\Upsilon(r)+\Upsilon(-r)\sim r^2$ as $r \to 0$. For instance, it can be easily checked that the optimal constant for $\delta=1$ in \eqref{eq:Palmesqaureestimate} is given by $c_1=1$.

We illustrate the practical use of \eqref{eq:Palmesqaureestimate} in the following example.
\begin{example}\label{ex:keql}
Let $X$ be an arbitrary countable set with at least two elements and $l:X\to (0,\infty)$ being integrable on $X$ with respect to the counting measure. Further, we set $k(x,y)=l(y)$ for all $x,y \in X$, $x\neq y$. Then $\mu$ given by $\mathrm{d}\mu=\pi\mathrm{d}\#$ with $\pi(x)=l(x)$, $x \in X$, is an invariant and reversible measure. In \cite[Example 5.2]{WZ} it has been shown that $L$ satisfies $CD_\Upsilon(\sqrt{2|l|_1l_*},\infty)$, where $l_*=\inf_{x \in X} l(x)$ and $|l|_1$ denotes the $\ell^1$-norm with respect to the counting measure on $X$. Clearly, the integrability of $l$ implies that $l_*=0$ if  $X$ is infinite. It has also been shown that $CD_\Upsilon(0,\infty)$ is best possible (concerning the curvature term) in the infinite state space case. Therefore, we will only consider the case of $X$ being finite in the sequel, i.e.  $l_*>0$ holds.

In \cite[Example 5.2]{WZ}, the following representation formula has been established for $f \in \R^X$ at $x\in X$ from \eqref{eq:Psi2Formel}
\begin{equation*}
\begin{split}
2 \Psi_{2,\Upsilon}(f)(x) &= \sum_{y\in X\setminus\{x\}} l(y) \Big( |l|_1 \Upsilon'(f(y)-f(x)) (f(y)-f(x)) + l(x) \Upsilon(f(x)-f(y)) \\
& \qquad\qquad \quad - \big(|l|_1 - l(x)\big)\Upsilon(f(y)-f(x)) \Big) + \sum_{\substack{y,z \in X \\ y,z \neq x}}l(y)l(z) \Upsilon(f(z)-f(y)).
\end{split}
\end{equation*}
By positivity of the mapping $r \mapsto \Upsilon(r)$ and \eqref{eq:Palmesqaureestimate}, we proceed as follows
\begin{align*}
2\Psi_{2,\Upsilon}(f)(x) &\geq  2\kappa \Psi_\Upsilon(f)(x) \\
&\quad+ \sum_{y\in X\setminus\{x\}} l(y) \Big( |l|_1 \Upsilon'(f(y)-f(x)) (f(y)-f(x)) + l(x) \Upsilon(f(x)-f(y)) \\
& \qquad\qquad \quad - \big(|l|_1+2\kappa - l(x)\big)\Upsilon(f(y)-f(x)) \Big)\\
&\geq 2\kappa \Psi_\Upsilon(f)(x) + 2 \alpha c_\delta \sum_{y \in X \setminus \{x\}} l(y) \big|f(x)-f(y)\big|^{1+\delta} \\
&\quad + \sum_{y \in X\setminus \{x\}}l(y) \Big( |l|_1 \Upsilon'(f(y)-f(x)) (f(y)-f(x)) + (l(x)-2\alpha) \Upsilon(f(x)-f(y)) \\
&\qquad\qquad\quad - (|l|_1+2\kappa -(l(x)-2\alpha)) \Upsilon(f(y)-f(x))\Big)\\
&= 2\kappa \Psi_\Upsilon(f)(x) + 2\alpha  c_\delta \sum_{y \in X\setminus \{x\}} l(y) \big|f(x)-f(y)\big|^{1+\delta} \\
&\quad + (l(x)-2\alpha)\sum_{y \in X \setminus \{x\}} l(y) \nu_{\frac{|l|_1}{l(x)-2\alpha},\frac{|l|_1+2\kappa}{l(x)-2\alpha}-1}(f(y)-f(x)),
\end{align*}
for $\kappa>0$, $\delta\geq 1$ and $\alpha>0$ such that $l(x)>2\alpha$. Now, we aim to apply Proposition \ref{gammacriterion} with $\gamma(r)=|r|^{1+\delta}$. Therefore it is desirable that the last summand of the latter term is non-negative. For this purpose we  employ \cite[Lemma A.4(ii)]{WZ}, which yields that if $|l|_1 \geq 2(l(x)-2\alpha)$ it suffices that
\begin{equation*}
\frac{2\kappa}{l(x)-2\alpha}-1 \leq 2^{\frac{3}{2}}\sqrt{\frac{|l|_1}{l(x)-2\alpha}}-1,
\end{equation*}
which is equivalent to $\kappa\leq \sqrt{2|l|_1(l(x)-2\alpha)}$. In case that $|l|_1<2 (l(x)-2\alpha)$ it suffices by \cite[Lemma A.4(ii)]{WZ} that
\begin{equation*}
\frac{2\kappa}{l(x)-2\alpha}-1 \leq \frac{2|l|_1}{l(x)-2\alpha}-1,
\end{equation*}
which is equivalent to $\kappa\leq |l|_1$. Clearly, $|l|_1\geq \sqrt{2 |l|_1(l(x)-2\alpha)}$ for at least one $x \in X$. Consequently, we obtain by Proposition \ref{gammacriterion} that $L$ satisfies $CD_\Upsilon\big(\sqrt{2|l|_1(l_*-2\alpha)},F\big)$  with  $CD$-function $F(r)= \frac{\alpha c_\delta}{|l|_1^\delta} r^{1+\delta}$, $r\geq 0$, for any $\alpha \in \big(0,\frac{l_*}{2}\big)$ and any $\delta\geq 1$.
\end{example}
\begin{example}\label{ex:Kn}
We choose $l(x)=1$ in the setting of Example \ref{ex:keql} for any $x\in X$ with given finite state space $X$ consisting of $n$ elements, $n\geq 2$. Then the underlying graph to $L$ is given by the complete graph $K_n$ and $L$ satisfies $CD_\Upsilon(\sqrt{2n(1-2\alpha)},\frac{n}{\alpha})$ for any $\alpha\in(0,\frac{1}{2})$ by Example \ref{ex:keql} in the case of $\delta=1$ (recall that we have $c_1=1$ in \eqref{eq:Palmesqaureestimate}). It is natural to ask whether there also exists a dimension bound which is uniform in $n$ while having non-negative curvature as it is the case for the Bakry-\'Emery condition (see e.g. \cite{JoLi}). Interestingly, in Example \ref{ex:nouniformdimensionforKn} we are able to give a negative answer to this question.
\end{example}

The procedure described in Example \ref{ex:keql} can be seen as a guidance for other examples where the mapping $\nu_{c,d}$ plays a similar role,  e.g. for weighted 4-cycles, finite birth-death processes and weighted stars as discussed in \cite{WZ}.

The case of (R)-Ricci-flat graphs will be discussed seperatly below.  For the reader's convenience we recall the definition of (R)-Ricci flat graphs, which originates from the work of \cite{CKKLP}.
\begin{defi}\label{def:RRicci}
Let $G=(V,E)$ be an unweighted $d$-regular graph. We call $G$ (R)-Ricci-flat at $x\in V$ if there exist maps $\eta_i: B_1(x) \to V$ (where $B_1(x)$ denotes the closed ball with radius $1$ and center $x$ with respect to the combinatorical graph distance) for $1 \leq i \leq d$ satisfying the following properties:
\begin{itemize}
\item[(i)] $\eta_i(u)\in B_1(u) \setminus \{u\} $ for any $u \in B_1(x)$ and $i \in \{1,...,d\}$,
\item[(ii)] $\eta_i(u) \neq \eta_j(u)$, whenever $i \neq j$,
\item[(iii)] $\bigcup_j \eta_j(\eta_i(x)) = \bigcup_j \eta_i(\eta_j(x))$ for any $i \in \{1,...,d\}$,
\item[(iv)] $\eta_i(\eta_i(x))=x$ for any $i \in \{1,...,d\}$.
\end{itemize} 
We call $G$ (R)-Ricci-flat if $G$ is (R)-Ricci-flat at each $x \in V$.
\end{defi}
(R)-Ricci-flat graphs constitute a subclass of Ricci-flat graphs with Bakry-\'Emery condition $CD(2,\infty)$, see \cite{CKKLP}. Important examples are given by complete bipartite graphs and, since (R)-Ricci-flat graphs are invariant under tensorization, by the hypercube, cf. \cite{CKKLP}. In \cite[Example 5.12]{WZ} it has been shown that a Markov generator with underlying graph being (R)-Ricci-flat  even satisfies $CD_\Upsilon(2,\infty)$.

\begin{example}\label{ex:RRicci}
Let the transition rates be given such that the underlying graph to $L$ is  a ($d$-regular) $(R)$-Ricci-flat graph with vertex set $X$. Further, let $x \in X$ be chosen arbitrary and let $\big( \eta_i \big)_{i=1,...,d}$ denote the corresponding mappings from Definition \ref{def:RRicci}.   In \cite[Example 5.12]{WZ} it has been shown that for $f \in \R^X$ the estimate 
\begin{equation*}
2\Psi_{2,\Upsilon}(f)(x) \geq 4 \Psi_\Upsilon (f)(x) + \sum_{i=1}^d \nu_{2,5}(f(\eta_i(x))-f(x))
\end{equation*}
holds true. Now, one readily checks (see also the proof of  \cite[Lemma A.3]{WZ}) that $\nu_{2,5}$ is strictly convex on $\R$. We infer from Proposition \ref{gammacriterion} that $L$ satisfies $CD_\Upsilon(2,F)$ with $F(r)=\frac{d}{2}\nu_{2,5}(-\frac{r}{d})$, $r\geq 0$, which is a $CD$-function due to the  asymptotic behavior at $\infty$ and by Remark \ref{remarkCDfunctions}. Interestingly, since the curvature constant in $CD_\Upsilon(2,\infty)$ is optimal in general, which follows, for instance, from the case where the underlying graph to $L$ is given by the hypercube (cf. \cite{WZ}), we do not need a trade off from the curvature constant in order to achieve the $CD_\Upsilon(2,F)$ condition (which is, for instance, in contrast to the procedure described in Example \ref{ex:keql}). Further, note that $\nu_{2,5}$ behaves only quartic near $0$. In fact,  there does not exist in general a $CD$-function $\hat{F}$ behaving quadratically near zero such that $L$ satisfies $CD_\Upsilon(2,\hat{F})$, by combining Remark \ref{CDdiscussionremark}(ii) with the fact that  the hypercube does not satisfy $CD(2,m)$ for  some $m<\infty$ (see \cite{CLP}).
\end{example}

\begin{example}\label{ex:birthdeath}
Here we consider a birth-death process  with infinite state space $X= \N_0$. We use the notation originating from \cite{CDP}, which has also been used in \cite{WZ}, and introduce the functions $a,b: X \to [0,\infty)$ with $a(x)=k(x,x+1)$, $b(x)=k(x,x-1)$, $b(0)=0$, $b(x)>0$ otherwise, and $a(x)>0$ for any $x\in X$. Moreover, we set  $k(x,y)=0$ whenever $|x-y| >1$. The detailed balance condition now reads as
\begin{equation}\label{eq:dbforbd}
a(x)\pi(x)= b(x+1)\pi(x+1)
\end{equation}
for any $x \in X$. Note that the measure $\mu$ given by $\mathrm{d}\mu=\pi \mathrm{d}\#$ is a finite measure if and only if
\begin{equation*}
\sum\limits_{x=1}^\infty \frac{a(x-1)\cdot \cdot \cdot a(0)}{b(x)\cdot \cdot \cdot b(1)} < \infty.
\end{equation*}
We assume monotonicity of the rates in the sense that $a(x) \leq a(x+1)$ and $b(x+1) \geq b(x)$ for any $x \in X$ and moreover that
\begin{equation}\label{eq:Caputoassumption}
a(x)-a(x+1) + b(x+1)-b(x) \geq \kappa
\end{equation}
holds for any $x \in X$ and some $\kappa>0$. Those assumptions led to modified logarithmic Sobolev inequalities in \cite{CDP}, and in \cite{WZ} it has been shown that they imply $CD(\frac{\kappa}{2},\infty)$. Apparently they also entail that $a(x)\leq a(0)$ for any $x \in X$ and  $b(x) \to \infty$ as $x \to \infty$.

We specify for $x\in X$ a function $f_x\in \R^X$ such that   $f_x(x+2)= f_x(x+1) +t$ and $f_x(x-2)=f_x(x-1)+s$, where we define $t= f_x(x+1)-f_x(x)$ and $s=f_x(x-1)-f_x(x)$ (this is called  minimizing $\Psi_{2,\Upsilon}(f)$ over the second neighborhood throughout \cite{WZ}) and  set $t=0$. Then we observe from \eqref{eq:Psi2Formel} (see also the  representation formula for $\Psi_{2,\Upsilon}(f)(x)$ that has been established in \cite[Example 5.13]{WZ}) 
\begin{equation*}
2\Psi_{2,\Upsilon}(f_x)(x)=b(x) \Big( \Upsilon(s) \big(b(x-1) - b(x) -a(x)\big) + \Upsilon(-s) a(x-1) + \Upsilon'(s)s \big(a(x-1)+ b(x) - b(x-1)\big) \Big).
\end{equation*}
Assuming that  $2\Psi_{2,\Upsilon}(f_x)(x)$ is greater than or equal to $\frac{1}{n} b(x)^2 s^2$ (which equals $\frac{1}{n} (-L f_x (x) )^2$) for some $n \in [1,\infty)$ implies that 
\begin{align*}
0&\leq b(x) \big( \Upsilon'(s) s - \Upsilon(s) - \frac{1}{n} s^2 \big) + b(x-1) \big( \Upsilon(s) - \Upsilon'(s)s \big)\\&\quad + a(x-1) \big( \Upsilon(-s)+\Upsilon'(s)s\big) - a(x)\Upsilon(s)\\
&\leq a(0) \big( \Upsilon(-s) + \Upsilon'(s)s \big)+ b(x) \big( \Upsilon'(s) s - \Upsilon(s) - \frac{1}{n} s^2 \big)
\end{align*}
Choosing $s<0$ such that $\Upsilon'(s)s - \Upsilon(s) - \frac{1}{n} s^2 <0$ and sending $x \to \infty$ yields a contradiction.

Interestingly, in \cite{WZ} it has been shown that under an  assumption which is stronger than \eqref{eq:Caputoassumption} the $CD_\Upsilon(\kappa_0,\infty)$ condition holds with some positive constant $\kappa_0>0$. This shows that it is possible to have positive curvature bounds while having no finite dimension bound regarding the $CD_\Upsilon(\kappa,n)$ condition for some $\kappa>0$. Furthermore, note that we will show by means of Corollary \ref{finitespaceforhigherpowers} below that a $CD_\Upsilon$ condition with positive curvature bound and  a non-quadratic power-type $CD$-function does not hold either.
\end{example}
Next, we give a quite simple negative criterion for the existence of a dimension term with regard to the quadratic $CD$-function in the infinite state space case.
\begin{proposition}\label{prop:negativecriterionfordimension}
If there exists a sequence $(x_m)_{m \in \N}\subset X$ such that
\begin{equation}\label{necconditionwithNandM1}
\frac{N(x_m)}{(M_1(x_m))^2}\to 0
\end{equation}
as $m \to \infty$, then  there does not exist some $n<\infty$ such that $CD_\Upsilon(0,n)$ holds true.
\end{proposition}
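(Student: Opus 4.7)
The plan is to argue by contradiction. Suppose that $CD_\Upsilon(0,n)$ holds for some $n<\infty$. I would test this inequality against the elementary bounded functions $f_m:=s\mathbf{1}_{\{x_m\}}$, where $s>0$ is a parameter whose magnitude will be chosen afterwards in terms of $n$. For this choice we immediately have $Lf_m(x_m)=-sM_1(x_m)$, so $-Lf_m(x_m)=sM_1(x_m)>0$ and hence $F_0(-Lf_m(x_m))=\tfrac{1}{n}s^2M_1(x_m)^2$.

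The heart of the argument is to evaluate $\Psi_{2,\Upsilon}(f_m)(x_m)$ by means of the representation formula \eqref{eq:Psi2Formel}. Since $f_m$ vanishes outside $\{x_m\}$, the differences $f_m(z)-f_m(y)$ with $y,z\neq x_m$ all vanish, so most summands collapse. Only the term with $z=x_m$ survives in the inner sum of the first line of \eqref{eq:Psi2Formel}, producing precisely the factor $\sum_{y\neq x_m}k(x_m,y)k(y,x_m)=N(x_m)$, while the remaining two lines depend only on $M_1(x_m)$. A routine simplification (using $\Upsilon(s)=e^s-s-1$, $\Upsilon(-s)=e^{-s}+s-1$ and $\Upsilon'(-s)=e^{-s}-1$) then yields
\begin{equation*}
2\Psi_{2,\Upsilon}(f_m)(x_m)=N(x_m)\bigl[e^s-1-se^{-s}\bigr]+M_1(x_m)^2\bigl[1-e^{-s}(1+s)\bigr].
\end{equation*}

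Inserting this into the assumed inequality $\Psi_{2,\Upsilon}(f_m)(x_m)\geq \tfrac{1}{n}s^2M_1(x_m)^2$ and dividing by $M_1(x_m)^2$ (which is positive by irreducibility) gives
\begin{equation*}
\frac{N(x_m)}{M_1(x_m)^2}\bigl[e^s-1-se^{-s}\bigr]\;\geq\;\frac{2s^2}{n}-\bigl[1-e^{-s}(1+s)\bigr].
\end{equation*}
Now the key observation is that the bracket $1-e^{-s}(1+s)$ is bounded above by $1$ for every $s>0$, while $\frac{2s^2}{n}$ diverges as $s\to\infty$. Hence, picking any fixed $s>\sqrt{n/2}$ makes the right-hand side a strictly positive constant, whereas the left-hand side tends to $0$ as $m\to\infty$ by hypothesis \eqref{necconditionwithNandM1}. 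This contradiction finishes the argument.

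The only slightly delicate step is the bookkeeping in the computation of $\Psi_{2,\Upsilon}(f_m)(x_m)$ from \eqref{eq:Psi2Formel}; however, no genuine obstacle is expected, since the indicator-like shape of $f_m$ localises almost every contribution either at the vertex $x_m$ or on the edges connecting $x_m$ to its neighbours, which is exactly what allows the two geometric quantities $M_1(x_m)$ and $N(x_m)$ to emerge naturally from the formula.
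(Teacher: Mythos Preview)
Your argument is correct and coincides with the paper's proof. Your test function $s\mathbf{1}_{\{x_m\}}$ with $s>0$ differs from the paper's choice $t\mathbf{1}_{X\setminus\{x_m\}}$ with $t<0$ only by an additive constant (and the reparametrisation $s=-t$), which is irrelevant since $L$, $\Psi_\Upsilon$ and $\Psi_{2,\Upsilon}$ depend solely on increments; your explicit formula for $2\Psi_{2,\Upsilon}(f_m)(x_m)$ matches the paper's expression $N(x_m)\bigl(\Upsilon'(t)t+\Upsilon(-t)\bigr)+M_1(x_m)^2\bigl(\Upsilon'(t)t-\Upsilon(t)\bigr)$ upon this substitution, and the concluding contradiction is obtained in the same way.
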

\begin{proof}
We consider $\big(f_m\big)_{m \in \N} \subset \ell^\infty(X)$ given by  $f_m(y) = t$ for any $y\in X \setminus \{x_m\}$ and $f_m(x_m)=0$, $m \in \N$, with $t <0$ being specified below. We read from \eqref{eq:Psi2Formel} that
\begin{align*}
2 \Psi_{2,\Upsilon}(f_m)(x_m) = N(x_m) \big( \Upsilon'(t)t + \Upsilon(-t)\big)+ (M_1(x_m))^2 \big(\Upsilon'(t)t - \Upsilon(t)\big)
\end{align*} 
holds. Moreover, we have $(-L f_m(x_m))^2= t^2 (M_1(x_m))^2$. Thus, $2\Psi_{2,\Upsilon}(f_m)(x_m) \geq \frac{1}{n} (-L f_m (x_m))^2$, with $1\leq n<\infty$, is equivalent to 
\begin{equation}\label{NxnM1squareestimate}
\frac{N(x_m)}{(M_1(x_m))^2}  \big( \Upsilon'(t)t + \Upsilon(-t)\big) + \Upsilon'(t)t - \Upsilon(t) - \frac{1}{n} t^2 \geq 0.
\end{equation}
Choosing $t<0$ such that $\Upsilon'(t)t-\Upsilon(t) - \frac{1}{n} t^2 < 0$ and sending $m\to \infty$, yields by \eqref{necconditionwithNandM1} a contradiction to \eqref{NxnM1squareestimate}.
\end{proof}
Clearly, Proposition \ref{prop:negativecriterionfordimension} also yields a necessary condition for families of Markov generators satisfying a uniform $CD_\Upsilon(0,n)$ condition. For the sake of clarity we will state this in the following corollary in the case that the underlying graphs to the corresponding Markov generators are unweighted, in which case the mappings  $N$ and $M_1$ are equal. The following corollary follows from the same arguments as in the proof of Proposition \ref{prop:negativecriterionfordimension}. Despite its simplicity, these findings lead to a remarkable difference between the $CD_\Upsilon(0,n)$ and the $CD(0,n)$ condition, as it will be demonstrated by Example  \ref{ex:nouniformdimensionforKn}.
\begin{corollary}\label{graphfamilycorollary}
Let $I$ be an arbitrary index set and $(L_i)_{i \in I}$ a family of Markov generators whose respective underlying graphs are unweighted and with corresponding state space $(X_i)_{i\in I}$. Assume that there exist sequences $(i_m)_{m \in \N}\subset I$ and $(x_m)_{m \in \N} \subset X$, where $X= \bigcup_{i \in I} X_i$, such that $M_1^{(i_m)}(x_m)\to \infty$ as $m \to \infty$. Here the upper index denotes that the function $M_1$ corresponds to the respective Markov generator. Then there exists no $n<\infty$ such that $L_i$ satisfies $CD_\Upsilon(0,n)$ for all $i \in I$.

In particular, if the underlying graph to a Markov generator $L$ is given by an unweighted graph with unbounded vertex degree, then there exists no $n<\infty$ such that $CD_\Upsilon(0,n)$ holds true.
\end{corollary}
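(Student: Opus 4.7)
My plan is to reduce the statement to (essentially) the argument of Proposition \ref{prop:negativecriterionfordimension} by exploiting that, in the unweighted setting, the quantity $N$ coincides with $M_1$. Since an unweighted graph has $k(x,y)\in\{0,1\}$ and reversibility forces $k(y,x)=k(x,y)$, one immediately obtains
\begin{equation*}
N(x)=\sum_{y\in X\setminus\{x\}}k(x,y)k(y,x)=\sum_{y\in X\setminus\{x\}}k(x,y)=M_1(x)
\end{equation*}
for every $L_i$ in the family. Hence along the prescribed sequence, the relevant dimensionless ratio reads $\tfrac{N(x_m)}{M_1^{(i_m)}(x_m)^2}=\tfrac{1}{M_1^{(i_m)}(x_m)}\to 0$.

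To derive the contradiction, assume for the sake of contradiction that there exists some $n<\infty$ so that every $L_i$ satisfies $CD_\Upsilon(0,n)$. I would then mimic the proof of Proposition \ref{prop:negativecriterionfordimension}: for each $m\in\N$, consider the test function $f_m\in\ell^\infty(X_{i_m})$ defined by $f_m(x_m)=0$ and $f_m(y)=t$ for $y\neq x_m$, with $t<0$ to be chosen momentarily. Plugging into the representation formula \eqref{eq:Psi2Formel} at $x_m$ and using $N(x_m)=M_1^{(i_m)}(x_m)$, the $CD_\Upsilon(0,n)$ inequality becomes
\begin{equation*}
\frac{1}{M_1^{(i_m)}(x_m)}\bigl(\Upsilon'(t)t+\Upsilon(-t)\bigr)+\Upsilon'(t)t-\Upsilon(t)-\frac{t^2}{n}\geq 0,
\end{equation*}
which must hold for every $m\in\N$.

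Now fix $t<0$ with $|t|$ sufficiently large so that $\Upsilon'(t)t-\Upsilon(t)-\tfrac{t^2}{n}<0$; this is possible because as $t\to-\infty$ the combination $\Upsilon'(t)t-\Upsilon(t)=(t-1)e^t+1$ tends to $1$, while $\tfrac{t^2}{n}\to\infty$. With $t$ fixed, the factor $\Upsilon'(t)t+\Upsilon(-t)$ is a finite constant, so the first summand tends to $0$ as $m\to\infty$ by hypothesis, contradicting the displayed inequality. The particular case of a single generator with unbounded vertex degree follows by taking $I=\{\ast\}$ and choosing $(x_m)$ along which $M_1(x_m)=\deg(x_m)\to\infty$.

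The argument is essentially algebraic once the identity $N=M_1$ is in place, and I do not anticipate any serious obstacle; the only delicate point is checking that the choice of $t$ can indeed make $\Upsilon'(t)t-\Upsilon(t)-t^2/n$ strictly negative, which is easily verified from the asymptotic behavior of $\Upsilon$ at $-\infty$.
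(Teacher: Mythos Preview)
Your proof is correct and follows essentially the same approach as the paper, which simply refers back to the argument of Proposition~\ref{prop:negativecriterionfordimension} together with the observation (stated in the text preceding the corollary) that $N=M_1$ in the unweighted case. Your additional justification that detailed balance and $\pi>0$ force $k(x,y)=k(y,x)$ when $k\in\{0,1\}$, as well as the explicit asymptotic check $\Upsilon'(t)t-\Upsilon(t)=(t-1)e^t+1\to 1$ as $t\to-\infty$, fill in details the paper leaves implicit.
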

\begin{example}\label{ex:nouniformdimensionforKn}
We consider the index set $I=\{n \in \N: n\geq 2\}$ (in the sense of Corollary \ref{graphfamilycorollary}) and  the Markov generator $L_n$ whose underlying graph is given by the complete graph $K_n$ for any $n \in I$. It is known on the one hand that $CD(0,4)$ holds for any $L_n$ (see  \cite[Proposition 3]{JoLi}), i.e. a dimension-term exists under non-negative curvature with respect to the Bakry-\'Emery CD-condition  that is uniform with regard to $n$. On the other hand, due to Corollary \ref{graphfamilycorollary} there does not exist a (uniform) $d<\infty$ such that $CD_\Upsilon(0,d)$ holds for any $L_n$, $n \in I$. 
\end{example}

\section{Entropy-Information inequalities}\label{sec:EIinequalities}
From now on we assume that the unique and reversible invariant measure $\mu$ is a probability measure. 

We consider the entropy
\begin{equation}\label{eq:Entropy}
\mathrm{Ent}_\mu(f)= \int_X f \log f \mathrm{d}\mu - \int_X f \mathrm{d}\mu\, \log \int_X f \mathrm{d}\mu
\end{equation}
for any positive function $f \in \ell^1(\mu)$. It is well known that $\mathrm{Ent}_\mu(f)\geq 0$. Note that we also allow for the value of $\mathrm{Ent}_\mu(f)=\infty$. 

The Fisher information is given by
\begin{equation}\label{eq:FisherInfo}
\mathcal{I}(f) = \frac{1}{2}\sum_{x,y \in X}k(x,y)\big(f(y)-f(x)\big)\big(\log f(y) - \log f(x)\big)\pi(x).
\end{equation}
If we assume that $M_1\in \ell^1(\mu)$ then $f\in \ell^{\infty,+}(X)$ ensures that $\mathcal{I}(f)<\infty$. Further, in the latter case we have the representation  
\begin{equation*}
\mathcal{I}(f)= \int_X f \Psi_\Upsilon(\log f) \mathrm{d}\mu,
\end{equation*}
see \cite[Section 3]{WZ}, where the formula has been established for $f\in \ell^{\infty,+}(X)$ being a probability densitiy with respect to $\mu$, although the proof extends verbatim to the  general case. Since we sum up in the right-hand side of \eqref{eq:FisherInfo} over non-negative entries, we can extend the functional $\mathcal{I}$ to positive functions $f:X\to(0,\infty)$, where we allow for the value of $\mathcal{I}(f)=\infty$. 

Note that the assumption of the Markov chain being irreducible implies that $\mathcal{I}(f)$=0 if and only if $f$ is constant and positive. 

Besides that, one readily verifies that the well known scaling behavior
\begin{equation}\label{eq:scalingEntro}
\mathrm{Ent}_\mu(cf)=c\, \mathrm{Ent}_\mu(f)
\end{equation}
and 
\begin{equation}\label{eq:scalingFisher}
\mathcal{I}(cf) = c\, \mathcal{I}(f)
\end{equation}
holds respectively for any constant $c>0$.

Our main object of investigation in the remaining part of this article will be the following family of functional inequalities.
\begin{defi}\label{definitionEIPhi}
We say that $L$ satisfies an entropy-information inequality $EI(\Phi)$ with respect to a strictly increasing and concave $C^1$-function $\Phi:(0,\infty)\to (0,\infty)$, which we refer to as the growth function, if for every $f \in P_*(X)$ with $\mathrm{Ent}_\mu(f)<\infty$ and $\mathcal{I}(f)<\infty$
\begin{equation}\label{EntropieInformationsInequality}
\mathrm{Ent}_\mu(f)\leq \Phi(\mathcal{I}(f))
\end{equation}
holds, where we set $\Phi(0):=\lim\limits_{r \to 0^+}\Phi(r)$.
\end{defi}
A well known example for an entropy-information inequality is the modified logarithmic Sobolev inequality 
\begin{equation}\label{eq:mLSI}
\mathrm{Ent}_\mu(f) \leq \frac{1}{2\kappa}\mathcal{I}(f)
\end{equation}
with constant $\kappa>0$. See \cite{BoTe} for an extensive account on modified logarithmic Sobolev inequalities in the discrete setting of Markov chains. Further, the functional inequality \eqref{eq:mLSI} was subject of investigation in \cite{CDP}, \cite{ErFa}, \cite{ErMa} and \cite{FaMa}, as well as in \cite{WZ} where it has been shown that $CD_\Upsilon(\kappa,\infty)$ (with $\kappa>0$) together with positive recurrence and the integrability conditions $M_1\in \ell^2(\mu)$ and $M_2\in \ell^1(\mu)$ imply \eqref{eq:mLSI} with constant $\kappa$.

\begin{remark}(i) The diffusive counterpart to Definition \ref{definitionEIPhi}, so called entropy-energy inequalities, are defined for growth functions mapping to $\R$ instead of $(0,\infty)$, see \cite{BGL}. This generality in the context of \cite{BGL} allows to include the quite important special case of the Euclidean logarithmic Sobolev inequality (cf. \cite[Proposition 6.2.5]{BGL}). However, assuming that $\Phi$ is non-negative is not a restriction in our setting where we have supposed that $\mu$ is a probability measure. Indeed, applying \eqref{EntropieInformationsInequality} to $f=\mathds{1}$ shows that $\lim\limits_{r \to 0^+} \Phi(r)<0$ is impossible to hold.

(ii) It will turn out to be quite useful to write \eqref{EntropieInformationsInequality} in an equivalent linearized form. More precisely, as $\Phi$ is concave we deduce from $\Phi(s) \leq \Phi(r)+ \Phi'(r)(s-r)$, $s,r \in (0,\infty)$, that \eqref{EntropieInformationsInequality} implies
\begin{equation}\label{eq:EEIlinearizedform}
\mathrm{Ent}_\mu(f) \leq \Phi'(r)\, \mathcal{I}(f)+ \Theta(r)
\end{equation}
for any $r \in (0,\infty)$, where $\Theta(r)=\Phi(r)-\Phi'(r)r$. Conversely, specifying $r=\mathcal{I}(f)$, \eqref{eq:EEIlinearizedform} implies \eqref{EntropieInformationsInequality}. Note that concavity also implies that $\Theta(r)\geq 0$ for any $r>0$ since $\lim\limits_{r \to 0^+}\Phi(r) \geq 0$. In particular, this implies that \eqref{eq:EEIlinearizedform} also holds for the case of $\mathcal{I}(f)=0$, or equivalently for $f=\mathds{1}$.

(iii) Let $f\in \ell^1(\mu)$ be positive with $\mathrm{Ent}_\mu(f)<\infty$ and $\mathcal{I}(f)<\infty$. The entropy-information inequality in the form \eqref{eq:EEIlinearizedform} extends to $f$ by 
\begin{equation}\label{eq:EIPhinonnormalized}
\mathrm{Ent}_\mu(f)\leq \Phi'(r) \mathcal{I}(f) + \Theta(r) \int_X f d\mu
\end{equation}
for any $r \in (0,\infty)$. This is a consequence of the scaling behavior  \eqref{eq:scalingEntro} and \eqref{eq:scalingFisher}, after having applied \eqref{eq:EEIlinearizedform} to $\frac{f}{\Vert f \Vert_1}$.
\end{remark}

Now, we come to the main theorem of this section, that links the previous section to the notion of entropy-information inequalities.
\begin{theorem}\label{maininequalityunderCD}
Let $M_1 \in \ell^2(\mu)$, $M_2 \in \ell^1(\mu)$ and the Markov chain generated by $L$ be positive recurrent. Further, let $L$ satisfy $CD_\Upsilon(\kappa,F)$ with $\kappa>0$ and a convex $CD$-function $F: [0,\infty) \to [0,\infty)$ such that $\left.F\right|_{(0,\infty)}\in C^1\big((0,\infty)\big)$ and $\frac{F'(r)r}{F(r)}\geq 1+\delta$ holds for any $r > 0$ and some $\delta>0$. Let $G:(0,\infty) \to (0,\infty)$ denote the inverse function of $r \mapsto \frac{F(r)}{r}$, $r>0$.  Then 
\begin{equation}\label{eq:mainresultinequality}
\mathrm{Ent}_\mu(f) \leq \int_0^\infty G \Big( \frac{\kappa}{e^{2\delta\kappa t} \big( 1+ \frac{\kappa \mathcal{I}(f)}{F(\mathcal{I}(f))}\big) - 1}\Big)\mathrm{d}t
\end{equation}
holds for any $f \in P_*(X)$ with $\mathrm{Ent}_\mu(f)<\infty$ and $\mathcal{I}(f)\in (0,\infty)$.
\end{theorem}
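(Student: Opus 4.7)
The plan is to run the classical semigroup-based entropy method, upgrading the linear exponential decay of Fisher information under $CD_\Upsilon(\kappa,\infty)$ established in \cite{WZ} to a nonlinear Gronwall analysis driven by the $CD$-function $F$. Fix $f$ as in the statement, set $u_t:=P_tf$ and $h(t):=\mathcal{I}(u_t)$. The assumptions $M_1\in\ell^2(\mu)$, $M_2\in\ell^1(\mu)$ together with positive recurrence allow one to justify (as in \cite{WZ}) the entropy-dissipation identity $\tfrac{d}{dt}\mathrm{Ent}_\mu(u_t)=-h(t)$, and ergodicity forces $\mathrm{Ent}_\mu(u_t)\to 0$ as $t\to\infty$. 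Integrating yields
\[
\mathrm{Ent}_\mu(f)=\int_0^\infty h(t)\,dt,
\]
so the task reduces to producing a pointwise upper bound on $h(t)$.

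The central step is to establish the differential inequality $h'(t)\le -2\kappa\,h(t)-2F(h(t))$. Differentiation of $h$ along the semigroup, justified by the integrability hypotheses as in the proof of the modified logarithmic Sobolev inequality in \cite{WZ}, produces the second-order identity $-\tfrac{1}{2}h'(t)=\int_X u_t\,\Psi_{2,\Upsilon}(\log u_t)\,d\mu$ (after approximation of $u_t$ by elements of $\ell^{\infty,+}(X)$ if necessary). Applying $CD_\Upsilon(\kappa,F)$ pointwise to $\log u_t$ and integrating against $u_t\,d\mu$ gives
\[
-\tfrac{1}{2}h'(t)\ge \kappa\,\mathcal{I}(u_t)+\int_X u_t\,F_0\bigl(-L\log u_t\bigr)\,d\mu.
\]
To lower-bound the last integral by $F(\mathcal{I}(u_t))$, combine \eqref{eq:FFI} with the invariance of $\mu$ to obtain $\int_X u_t(-L\log u_t)\,d\mu=\mathcal{I}(u_t)$; then restrict to $A:=\{-L\log u_t\ge 0\}$, set $a:=\int_A u_t\,d\mu\in(0,1]$ and apply Jensen's inequality to the probability measure $\tfrac{1}{a}\mathds{1}_A u_t\,d\mu$, using convexity and monotonicity of $F$ on $[0,\infty)$, to get $\int_X u_t\,F_0(-L\log u_t)\,d\mu\ge aF(\mathcal{I}(u_t)/a)$. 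The hypothesis that $r\mapsto F(r)/r^{1+\delta}$ is non-decreasing absorbs the factor $1/a$ via $aF(\mathcal{I}(u_t)/a)\ge a^{-\delta}F(\mathcal{I}(u_t))\ge F(\mathcal{I}(u_t))$, which completes the differential inequality.

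What remains is an elementary ODE manipulation. Setting $\psi(t):=F(h(t))/h(t)$, the $C^1$-regularity of $F$ on $(0,\infty)$, the assumption $F'(r)r/F(r)\ge 1+\delta$ and $h'(t)\le 0$ combine to give
\[
\psi'(t)\le -2\delta\,\psi(t)\bigl(\kappa+\psi(t)\bigr).
\]
Verifying that $t\mapsto e^{2\delta\kappa t}\psi(t)/(\kappa+\psi(t))$ is non-increasing (a direct computation using the above) yields
\[
\psi(t)\le \frac{\kappa}{e^{2\delta\kappa t}(1+\kappa/\psi(0))-1},\qquad \psi(0)=\frac{F(\mathcal{I}(f))}{\mathcal{I}(f)}.
\]
Since $G$ is the inverse of the strictly increasing map $r\mapsto F(r)/r$, this produces the pointwise bound $h(t)\le G\bigl(\kappa/[e^{2\delta\kappa t}(1+\kappa\mathcal{I}(f)/F(\mathcal{I}(f)))-1]\bigr)$, and integrating in $t$ yields \eqref{eq:mainresultinequality}.

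The main obstacle I anticipate is the Jensen-type estimate for $\int_X u_t F_0(-L\log u_t)\,d\mu$: because $F_0$ is not convex on $\R$ one cannot apply Jensen globally, and the splitting on $A$ leaves behind a factor $a^{-\delta}\ge 1$ whose cancellation crucially invokes the strict power-type growth condition on $F$. A secondary, more technical point is the rigorous justification of the second-order identity for $h'$ and the exchange of differentiation with the infinite sum defining $\mathcal{I}$, which exploits the full strength of $M_1\in\ell^2(\mu)$, $M_2\in\ell^1(\mu)$ and positive recurrence, and will likely demand approximating $f$ by elements of $P_*^+(X)$ before passing to the limit.
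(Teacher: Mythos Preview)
Your argument is correct and follows the same overall strategy as the paper: reduce to bounded positive densities, use the de Bruijn identity $\tfrac{d}{dt}\mathrm{Ent}_\mu(P_tf)=-\mathcal{I}(P_tf)$ together with the second-order formula $-\tfrac{1}{2}h'(t)=\int_X u_t\Psi_{2,\Upsilon}(\log u_t)\,d\mu$, apply $CD_\Upsilon(\kappa,F)$, and run a Gronwall analysis on the resulting differential inequality $h'\le -2\kappa h-2F(h)$. Your ODE manipulation via $\psi=F(h)/h$ is equivalent to the paper's, which instead verifies directly that $t\mapsto e^{-2\delta\kappa t}\bigl(1+\kappa h(t)/F(h(t))\bigr)$ is non-decreasing; these are reciprocals of one another.

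The one point worth correcting is your treatment of the Jensen step, which you flag as the ``main obstacle''. In fact $F_0$ \emph{is} convex on all of $\R$: since $F$ is convex on $[0,\infty)$ with $F(0)=0$ and $F\ge 0$, the right-hand slope at $0$ is non-negative, which matches the left-hand slope $0$ of the trivial extension. The paper exploits this and applies Jensen's inequality globally with respect to the probability measure $u_t\,d\mu$, obtaining in one line
\[
\int_X u_t\,F_0(-L\log u_t)\,d\mu\ \ge\ F_0\Bigl(\int_X u_t(-L\log u_t)\,d\mu\Bigr)=F\bigl(\mathcal{I}(u_t)\bigr).
\]
Your detour through the set $A=\{-L\log u_t\ge 0\}$ and the estimate $aF(\mathcal{I}/a)\ge a^{-\delta}F(\mathcal{I})\ge F(\mathcal{I})$ is valid but unnecessary; it also makes the power-type growth hypothesis appear twice in the proof, whereas in the paper it is invoked only in the ODE step.
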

\begin{proof}
It suffices to deduce the claim for $f \in P_*^+(X)$. The full statement follows then from the same standard truncation argument as presented in \cite[Lemma 3.2]{WZ} and the dominated convergence theorem for approximating the right-hand side of \eqref{eq:mainresultinequality}.
For $f \in P_*^+(X)$ we set $\Lambda(t)= \mathrm{Ent}_\mu(P_t f)$, $t \geq 0$. It is well known that 
\begin{equation}\label{eq:firstDerivative}
\Lambda'(t) = -\mathcal{I}(P_t f)
\end{equation}
is valid provided that $M_1 \in \ell^1(\mu)$. Further, we infer from \cite[Theorem 3.4]{WZ} that
\begin{equation}\label{eq:secondDerivative}
\Lambda''(t) = 2 \int_X P_t f \Psi_{2,\Upsilon}(\log P_t f)\mathrm{d}\mu
\end{equation}
holds true given the assumptions $M_1 \in \ell^2(\mu)$ and $M_2 \in \ell^1(\mu)$. We apply $CD_\Upsilon(\kappa,F)$ to deduce
\begin{align*}
\Lambda''(t) &\geq -2\kappa \Lambda'(t) + 2 \int_X P_t f \, F_0(- L (\log P_t f)) \mathrm{d}\mu \\
&\geq -2\kappa \Lambda'(t) +2 F_0 \Big(- \int_X P_t f L(\log P_t f) \mathrm{d}\mu\Big),
\end{align*}
where the latter follows from convexity of the trivial extension $F_0$ (which follows from convexity of $F$), the fact that $P_t f$ is a probability density with respect to $\mu$, which follows from $\mu$ being invariant for $(P_t)_{t \geq 0}$, and Jensen's inequality.
Now, by the  identity \eqref{eq:FFI} (cf. \cite[Lemma 2.2]{WZ}) and $\mu$ being invariant, we have
\begin{align*}
-\int_X P_t f L(\log P_t f) \mathrm{d}\mu =  \int_X P_t f\, \Psi_\Upsilon(\log P_t f) \mathrm{d}\mu = \mathcal{I}(P_t f) 
\end{align*}
and hence we end up with the differential inequality
\begin{equation}\label{eq:diffineqformEE}
\Lambda''(t) \geq - 2\kappa \Lambda'(t) + 2 F(-\Lambda'(t)).
\end{equation}
Note that in fact $\Lambda'(t)<0$ holds, since we have $f=\mathds{1}$ otherwise.
Further, we observe that
\begin{align*}
\frac{d}{dt} \Big[e^{-2\delta\kappa t} \Big( 1-\kappa &\frac{\Lambda'(t)}{F(-\Lambda'(t))}\Big)\Big] = e^{-2\delta \kappa t} \Big( \frac{2\delta\kappa^2 \Lambda'(t)}{F(-\Lambda'(t))} - 2\delta\kappa - \kappa\Lambda''(t)\frac{F(-\Lambda'(t))+ F'(-\Lambda'(t))\Lambda'(t)}{F(-\Lambda'(t))^2}\Big)\\
&= \frac{\kappa e^{-2\delta\kappa t}}{F(-\Lambda'(t))} \Big(  2 \delta\kappa \Lambda'(t) - 2 \delta F(-\Lambda'(t)) - \Lambda''(t) \Big( 1+ \frac{F'(-\Lambda'(t))\Lambda'(t)}{F(-\Lambda'(t))}\Big)\Big)\\
&\geq \frac{\delta\kappa e^{-2\delta\kappa t}}{F(-\Lambda'(t))} \Big( 2 \kappa \Lambda'(t) - 2 F(-\Lambda'(t)) + \Lambda''(t)\Big),
\end{align*}
where we have applied the condition $\frac{-\Lambda'(t)F'(-\Lambda'(t))}{F(-\Lambda'(t))}\geq 1+\delta$ and $\Lambda''(t)\geq 0$ in the last step. Hence, \eqref{eq:diffineqformEE} yields that the  mapping $t \mapsto e^{-2\delta\kappa t} \big( 1-\kappa \frac{\Lambda'(t)}{F(-\Lambda'(t))}\big)$ is increasing. In particular, this implies that
\begin{align*}
e^{-2\delta\kappa t} \Big(1- \kappa \frac{\Lambda'(t)}{F(-\Lambda'(t))}\Big) \geq 1 - \kappa \frac{\Lambda'(0)}{F(-\Lambda'(0))},
\end{align*}
which can be rearranged to
\begin{equation*}
\frac{F(-\Lambda'(t))}{-\Lambda'(t)} \leq \frac{\kappa}{e^{2\delta\kappa t} \big(1-\kappa \frac{\Lambda'(0)}{F(-\Lambda'(0))}\big) - 1}.
\end{equation*}
Then, using $\mathcal{I}(f) = - \Lambda'(0)$, we obtain
\begin{equation*}
-\Lambda'(t) \leq G \Big( \frac{\kappa}{e^{2\delta\kappa t} \big( 1+ \frac{\kappa\mathcal{I}(f)}{F(\mathcal{I}(f)}\big) - 1}\Big).
\end{equation*}
Consequently, we conclude
\begin{equation}\label{eq:WithTmaininequality}
\Lambda(0)-\Lambda(T) = - \int_0^T \Lambda'(t) \mathrm{d}t \leq \int_0^T G \Big( \frac{\kappa}{e^{2\delta\kappa t} \big( 1+ \frac{\mathcal{I}(f)}{F(\mathcal{I}(f)}\big) - 1}\Big)\mathrm{d}t.
\end{equation}
The claim follows by sending $T\to \infty$. Indeed, $\Lambda(T)\to 0$ as $T \to \infty$ follows from the dominated convergence theorem, the Markov chain being ergodic and $(P_t)_{t \geq 0}$ being a Markov semigroup.
\end{proof}
\begin{remark}\label{rem:CDfunctionassumption}(i)
The crucial assumption that
\begin{equation}\label{eq:CDfunctionassumption2}
F'(r)\geq \frac{(1+\delta)F(r)}{r}, 
\end{equation}
holds for some $\delta>0$ and any $r>0$
implies by Gronwall's inequality that we have for  fixed $a>0$
\begin{equation*}
\frac{F(r)}{r^{1+\delta}} \geq \frac{F(a)}{a^{1+\delta}},
\end{equation*}
for any $r > a>0$, i.e. the mapping $r \mapsto \frac{F(r)}{r^{1+\delta}}$, $r>0$, is increasing. Conversely, differentiating $r \mapsto \frac{F(r)}{r^{1+\delta}}$, $r>0$, the property \eqref{eq:CDfunctionassumption2} follows provided that $r\mapsto \frac{F(r)}{r^{1+\delta}}$, $r>0$, is increasing. Hence, both properties are equivalent. In particular, we observe that \eqref{eq:CDfunctionassumption2} ensures that $F$ grows at least like $r^{1+\delta}$ as $r\to\infty$. On the other hand, recall that we have seen in Remark \ref{CDdiscussionremark}(iii) that $F$ can not behave better than quadratic at $0$ provided that $CD_\Upsilon(\kappa,F)$ holds for some $\kappa \in \R$.

(ii) Note that the mapping $G:(0,\infty)\to(0,\infty)$ is in fact well defined, since assuming that $r\mapsto \frac{F(r)}{r^{1+\delta}}$ is increasing on $(0,\infty)$ for some $\delta>0$ implies that $\frac{F(r)}{r}\to 0$ as $r\to 0$ and $\frac{F(r)}{r}\to \infty$ as $r\to \infty$.

(iii) We interpret the integral on the right-hand side of \eqref{eq:mainresultinequality} as $\infty$ in the case that the integral is divergent. In fact, this situation appears even under the assumptions of Theorem \ref{maininequalityunderCD} as the  $CD$-function $F(r)= r^me^{-\frac{1}{r^m}}$, $r\geq 0$, for some $m>1$ shows. Indeed, one readily verifies that \eqref{eq:CDfunctionassumption2} with $\delta = m-1$ and convexity of $F$ respectively hold true. The problem results from the behavior of $F$ in the origin. More precisely,  $\frac{F(r)}{r}$ converges faster to $0$ than $e^{-\frac{1}{r^m}}$ as $r \to 0^+$, which yields that $G(r)$ tends slower to $0$ than $(\log \frac{1}{r})^{-\frac{1}{m}}$ as $r \to 0^+$. Consequently the integrand in the right-hand side of \eqref{eq:mainresultinequality} dominates a behavior of $t^{-\frac{1}{m}}$ as $t\to \infty$, which yields that the integral does not converge. 
\end{remark}
For general $CD$-functions the integral on the right-hand side of \eqref{eq:mainresultinequality}  can not be calculated explicitly and, moreover, it is not clear whether the mapping $r \mapsto \int_0^\infty G \big(\frac{\kappa}{e^{2\kappa t}(1+\frac{\kappa r}{F(r)})-1}\big) \mathrm{d}t$ is concave. We will focus in the sequel on the situation where the $CD$-function is given by some power-type function, in which case the mapping $G:(0,\infty)\to(0,\infty)$ of Theorem \ref{maininequalityunderCD} can be given explicitly. In fact, the following result shows in particular  that the functional inequality \eqref{eq:mainresultinequality} is compatible with Definition \ref{definitionEIPhi} for power-type $CD$-functions.
\begin{proposition}\label{prop:EIforpowertype}
Let $M_1 \in \ell^2(\mu)$, $M_2 \in \ell^1(\mu)$ and the Markov chain generated by $L$ be positive recurrent. Further, let $L$ satisfy $CD_\Upsilon(\kappa,F)$ with $\kappa>0$ and $F(r)=\frac{r^{\delta+1}}{n}$, $r\geq 0$, for some $\delta\geq 1$ and $n \in (0,\infty)$. Then $L$ satisfies $EI(\Phi)$ with the growth function 
\begin{equation}\label{eq:powertypegrowthfunction}
\Phi(r) = \frac{\sqrt[\delta]{\kappa n}}{2\kappa}\int_{\frac{\sqrt[\delta]{\kappa n}}{r}}^\infty \frac{v^{\delta-2}}{v^\delta + 1}\mathrm{d}v, \, r>0.
\end{equation}
Moreover, the growth function $\Phi$ satisfies the following assertions:
\begin{itemize}
\item[(i)] $\Phi$ is bounded if and only if $\delta>1$,
\item[(ii)] $\int_0^\infty \frac{\Phi(s^2)}{s^2}\mathrm{d}s <\infty$.
\end{itemize}
\end{proposition}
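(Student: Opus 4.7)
The plan is to apply Theorem \ref{maininequalityunderCD} to the power-type $CD$-function $F(r)=r^{\delta+1}/n$ and then explicitly evaluate the integral appearing on the right-hand side of \eqref{eq:mainresultinequality}.

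First I would verify the hypotheses of Theorem \ref{maininequalityunderCD}. Convexity of $F$ on $[0,\infty)$ follows from $\delta\ge 1$, $F$ is clearly $C^1$ on $(0,\infty)$, and $F'(r)r/F(r)=\delta+1$, so the structural condition holds with parameter $\delta$. The mapping $r\mapsto F(r)/r=r^\delta/n$ has inverse $G(s)=(ns)^{1/\delta}$. Setting $I:=\mathcal{I}(f)$ one has $\kappa I/F(I)=\kappa n/I^\delta$, so \eqref{eq:mainresultinequality} takes the form
\begin{equation*}
\mathrm{Ent}_\mu(f)\le (n\kappa)^{1/\delta}\int_0^\infty \bigl(e^{2\delta\kappa t}(1+\kappa n/I^\delta)-1\bigr)^{-1/\delta}\,\mathrm{d}t.
\end{equation*}

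The core step is to bring this integral into the form \eqref{eq:powertypegrowthfunction}. I would first substitute $u=e^{2\kappa t}$, which gives $\mathrm{d}t=\mathrm{d}u/(2\kappa u)$ with bounds running from $1$ to $\infty$, and then perform the substitution $v^\delta=u^\delta(1+\kappa n/I^\delta)-1$. A direct calculation shows $\mathrm{d}u=v^{\delta-1}\bigl(u^{\delta-1}(1+\kappa n/I^\delta)\bigr)^{-1}\mathrm{d}v$, the new lower bound is $v=(\kappa n)^{1/\delta}/I$, and using $u^\delta(1+\kappa n/I^\delta)=v^\delta+1$ essentially all factors cancel and leave the integrand $v^{\delta-2}/(v^\delta+1)$. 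Multiplying by the outer constants then yields $\Phi(I)$ exactly in the form \eqref{eq:powertypegrowthfunction}.

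To verify that $\Phi$ is an admissible growth function, I would differentiate using the fundamental theorem of calculus and the chain rule, which after simplification yields
\begin{equation*}
\Phi'(r)=\frac{n\,(\kappa n)^{(\delta-2)/\delta}}{2(\kappa n+r^\delta)}>0,
\end{equation*}
so $\Phi\in C^1((0,\infty))$ is strictly increasing. Strict concavity follows immediately since $\Phi''(r)=-\delta r^{\delta-1}n(\kappa n)^{(\delta-2)/\delta}/(2(\kappa n+r^\delta)^2)<0$. Positivity of $\Phi$ is immediate from positivity of the integrand.

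For (i), as $r\to\infty$ the lower limit $(\kappa n)^{1/\delta}/r$ tends to $0$ while $v^{\delta-2}/(v^\delta+1)\sim v^{\delta-2}$ near $0$, so integrability at $0$ (hence boundedness of $\Phi$) holds if and only if $\delta>1$. For (ii), using the decomposition $v^{\delta-2}/(v^\delta+1)=v^{-2}-v^{-2}/(v^\delta+1)$ one extracts the leading behavior $\Phi(r)\sim r/(2\kappa)$ as $r\to 0^+$, and hence $\Phi(s^2)/s^2\to 1/(2\kappa)$ as $s\to 0^+$, giving integrability near $0$. Near $\infty$, for $\delta>1$ boundedness of $\Phi$ already yields $\Phi(s^2)/s^2\lesssim s^{-2}$, while for $\delta=1$ a partial-fractions computation gives the explicit formula $\Phi(s^2)=\tfrac{n}{2}\log(1+s^2/(\kappa n))$, so $\Phi(s^2)/s^2\sim n\log(s)/s^2$, which is still integrable at infinity. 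The main technical obstacle is arranging the two-step substitution in the proof of the main formula so that the factors $(1+\kappa n/I^\delta)$ and $(v^\delta+1)$ collapse precisely, but once the exponents are tracked carefully the remaining verifications are routine asymptotic analysis.
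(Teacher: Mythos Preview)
Your approach is essentially the same as the paper's: apply Theorem \ref{maininequalityunderCD}, compute $G$ explicitly, reduce the time integral to the $v$-integral by substitution, and then read off concavity and the asymptotics for (i) and (ii). One minor slip: your formula for $\Phi'(r)$ carries a spurious factor $(\kappa n)^{(\delta-2)/\delta}$; the correct expression is $\Phi'(r)=\dfrac{n}{2(\kappa n+r^\delta)}$ (the exponents $2/\delta$ and $(\delta-2)/\delta$ combine to $1$), though this does not affect positivity or concavity. For part (ii) near $0$ the paper uses the cruder bound $v^{\delta-2}/(v^\delta+1)\le v^{-2}$ directly inside the double integral rather than your asymptotic $\Phi(r)\sim r/(2\kappa)$, but both arguments are equivalent in spirit.
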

\begin{proof}
Clearly, $F$ is convex and $F'(r)r=(1+\delta)F(r)$ holds for any $r>0$. The mapping  $G:(0,\infty)\to(0,\infty)$ from Theorem \ref{maininequalityunderCD} is given by $G(r)=\sqrt[\delta]{n r}$, $r>0$. Now, it follows from elementary substitution that
\begin{equation*}
\int_0^\infty \sqrt[\delta]{\frac{\kappa n}{e^{2\delta\kappa t}(1+\frac{\kappa n}{r^\delta})-1}}\mathrm{d}t =\frac{\sqrt[\delta]{\kappa n}}{2\delta\kappa}\int_{\frac{\kappa n}{r^{\delta}}}^\infty \frac{1}{\sqrt[\delta]{u}(u+1)}\mathrm{d}u = \frac{\sqrt[\delta]{\kappa n}}{2\kappa} \int_{\frac{\sqrt[\delta]{\kappa n}}{r}}^\infty \frac{v^{\delta-2}}{v^\delta + 1}\mathrm{d}v= \Phi(r).
\end{equation*}
By means of Definition \ref{definitionEIPhi} and Theorem \ref{maininequalityunderCD} it suffices to prove that $\Phi$ is concave in order to deduce that $L$ satisfies $EI(\Phi)$ (note that there is nothing to show for the case of $\mathcal{I}(f)=0$). To that aim, we differentiate $\Phi$ and observe
\begin{align}
\Phi'(r) &= \frac{n}{2(\kappa n + r^\delta)},\label{eq:derivativepowertypegrowth}\\
\Phi''(r) &= - \frac{n\delta r^{\delta-1}}{2(\kappa n + r^\delta)^2},\nonumber
\end{align}
which implies concavity of $\Phi$. 

The growth function $\Phi$ is bounded if and only if the integral in the right-hand side of \eqref{eq:powertypegrowthfunction} converges as $r\to \infty$. The latter property holds true if and only if  the integral  $
\int_{\frac{\sqrt[\delta]{\kappa n}}{r}}^1 v^{\delta-2} \mathrm{d}v $ converges as $r\to \infty$, which happens to be true if and only if $\delta>1$. 

Regarding the remaining assertion, we note that there is nothing to show for the behavior at $\infty$ by boundedness of $\Phi$ in case of $\delta>1$ and by the explicit formula for the growth function in the special case of $\delta=1$, which will be deduced in Corollary \ref{logEIunderCD(kappa,n)} below. As to the behavior at $0$, we observe for $\varepsilon>0$ that
\begin{align*}
\int_0^\varepsilon\frac{1}{s^2}\int_{\frac{\sqrt[\delta]{\kappa n}}{s^2}}^\infty \frac{v^{\delta-2}}{v^\delta + 1}\mathrm{d}v\,\mathrm{d}s \leq \int_0^\varepsilon \frac{1}{s^2}\int_{\frac{\sqrt[\delta]{\kappa n}}{s^2}}^\infty \frac{1}{v^2}\mathrm{d}v\,\mathrm{d}s = \frac{\varepsilon}{\sqrt[\delta]{\kappa n}}.
\end{align*}
\end{proof}
\begin{remark}
Let us emphasize an analogy between the relation of the curvature-dimension conditions $CD_\Upsilon(\kappa,F)$ (with power-type $CD$-function $F$) and $CD_\Upsilon(\kappa,\infty)$, and the resulting functional inequalities, $EI(\Phi)$ with $\Phi$ given by \eqref{eq:powertypegrowthfunction} and the modified logarithmic Sobolev inequality \eqref{eq:mLSI} with constant $\kappa>0$. Clearly, $CD_\Upsilon(\kappa,F)$ implies $CD_\Upsilon(\kappa,\infty)$. On the other hand, concavity of $\Phi$ (where $\Phi$ is given by \eqref{eq:powertypegrowthfunction}) implies that
$
\Phi(r)\leq \Phi(s)+\Phi'(s)(r-s), \, r,s\in (0,\infty).
$
Using identity \eqref{eq:derivativepowertypegrowth}, this estimate yields 
\begin{equation*}
\Phi(r)\leq \frac{r}{2\kappa}
\end{equation*} 
when sending $s\to 0$. Hence, $EI(\Phi)$ is stronger than the modified logarithmic Sobolev inequality \eqref{eq:mLSI} with constant $\kappa>0$, where the latter is a consequence of $CD_\Upsilon(\kappa,\infty)$ (cf. \cite{WZ}).
\end{remark}
In the case of the $CD_\Upsilon(\kappa,n)$ condition, Theorem \ref{maininequalityunderCD} yields the following important entropy-information inequality.
\begin{corollary}\label{logEIunderCD(kappa,n)}
Let $M_1 \in \ell^2(\mu)$, $M_2 \in \ell^1(\mu)$ and the Markov chain generated by $L$ be positive recurrent. Further, let $L$ satisfy $CD_\Upsilon(\kappa,n)$ with $\kappa>0$ and $n<\infty$. Then  $EI(\Phi)$ holds with $\Phi(r)= \frac{n}{2}\log \Big( 1 + \frac{r}{\kappa n}\Big)$, $r>0$, i.e.
\begin{equation}\label{eq:logarithmicEI}
\mathrm{Ent}_\mu(f) \leq \frac{n}{2}\log \Big( 1+ \frac{\mathcal{I}(f)}{\kappa n}\Big)
\end{equation}
holds for any $f \in P_*(X)$ with $\mathrm{Ent}_\mu(f)<\infty$.
\end{corollary}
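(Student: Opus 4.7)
The plan is to deduce this corollary as the explicit $\delta=1$ instance of Proposition \ref{prop:EIforpowertype}. Since the condition $CD_\Upsilon(\kappa,n)$ is by Definition \ref{definitionCDcondition} nothing but $CD_\Upsilon(\kappa,F)$ with $F(r)=\frac{r^2}{n}$, and this $F$ is convex, $C^1$ on $(0,\infty)$ with $F'(r)r/F(r)=2$ (so $\delta=1$ works), Proposition \ref{prop:EIforpowertype} applies verbatim. Thus the whole task reduces to computing the growth function $\Phi$ given by \eqref{eq:powertypegrowthfunction} in closed form when $\delta=1$ and checking that it matches $\frac{n}{2}\log\!\left(1+\frac{r}{\kappa n}\right)$.

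Setting $\delta=1$ in \eqref{eq:powertypegrowthfunction} gives
\begin{equation*}
\Phi(r)=\frac{\kappa n}{2\kappa}\int_{\kappa n/r}^{\infty}\frac{v^{-1}}{v+1}\,\mathrm{d}v=\frac{n}{2}\int_{\kappa n/r}^{\infty}\frac{1}{v(v+1)}\,\mathrm{d}v.
\end{equation*}
The integrand splits as $\frac{1}{v(v+1)}=\frac{1}{v}-\frac{1}{v+1}$, so an antiderivative is $\log\frac{v}{v+1}$. Evaluating between $\kappa n/r$ and $\infty$, and using $\log\frac{v}{v+1}\to 0$ at $\infty$, yields exactly $\log\!\left(1+\frac{r}{\kappa n}\right)$, and thus $\Phi(r)=\frac{n}{2}\log\!\left(1+\frac{r}{\kappa n}\right)$.

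Alternatively, and perhaps more transparently, one can bypass Proposition \ref{prop:EIforpowertype} and apply Theorem \ref{maininequalityunderCD} directly: with $F(r)=r^2/n$ the inverse of $r\mapsto F(r)/r$ is $G(s)=ns$, and the substitution $u=e^{2\kappa t}\bigl(1+\tfrac{\kappa n}{\mathcal{I}(f)}\bigr)$ turns the right-hand side of \eqref{eq:mainresultinequality} into $\frac{n}{2}\int_{1+\kappa n/\mathcal{I}(f)}^{\infty}\frac{du}{u(u-1)}$, which integrates to the claimed logarithm by the same partial-fraction identity. Either route produces the desired $EI(\Phi)$ inequality; no further obstacle arises, since concavity, monotonicity and positivity of this $\Phi$ are immediate from its explicit form. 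The only subtlety worth flagging is the convention $\Phi(0)=\lim_{r\to 0^+}\Phi(r)=0$, which is consistent with $\mathcal{I}(f)=0$ occurring precisely when $f=\mathds{1}$, in which case \eqref{eq:logarithmicEI} is trivial.
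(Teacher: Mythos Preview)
Your proposal is correct and follows essentially the same approach as the paper: the paper's proof consists of the single sentence ``Choosing $\delta=1$ in \eqref{eq:powertypegrowthfunction}, the claim follows from elementary calculations,'' and you have spelled out precisely those elementary calculations (the partial-fraction integration) together with a harmless alternative computation directly from Theorem~\ref{maininequalityunderCD}.
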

\begin{proof}
Choosing $\delta=1$ in \eqref{eq:powertypegrowthfunction}, the claim follows from elementary calculations.
\end{proof}
As we already highlighted in the introduction, we emphasize that \eqref{eq:logarithmicEI} serves as a natural discrete analogue to the logarithmic entropy-energy inequality \eqref{eq:classicallogEE}, which plays an important role in the diffusive setting of \cite{BGL} (in which case it holds true provided that $CD(\kappa,n)$ is valid).

In the following example we consider one of the most important instances of a birth-death process from Example \ref{ex:birthdeath}.
\begin{example}\label{ex:poisson}
As a special case of a birth-death process from Example \ref{ex:birthdeath} (with the notation taken from there), we consider the Poisson case which is given by the choice $a(x)=\lambda$, where $\lambda>0$ is called the intensity rate, and $b(x)=x$, both for any $x \in \N_0$. The invariant and reversible measure is given by the denisity $\pi_\lambda(x)= \frac{\lambda^x}{x!}e^{-\lambda} $, $x \in \N_0$. In \cite[Example 5.13]{WZ} it has been shown that there does not exists some $\kappa>0$ such that $L$ satisfies $CD_\Upsilon(\kappa,\infty)$. However, it is known that the Poisson case of the birth-death prosess satisfies the modified logarithmic Sobolev inequality $EI(\Phi)$ with $\Phi(r)=r$, see e.g. \cite{CDP}. Here we show, that this is the best possible entropy-information inequality for the Poisson case in the sense that if $\Phi$ grows slower than linear as $r\to \infty$, then $L$ fails to satisfy $EI(\Phi)$.  

To that aim we repeat an argument that has been used in \cite{CDP} to show sharpness of the corresponding modified logarithmic Sobolev inequality. Indeed, we consider $f_k(x) = \frac{e^{kx}}{e^{\lambda(e^k - 1)}}$, which can be readily checked to be an element of $P_*^+(X)$ for any $k \in \N$. We have
\begin{align*}
\mathrm{Ent}_\mu(f_k)&=\frac{1}{e^{\lambda e^k }}\sum_{x \in \N_0} \frac{e^{kx} \lambda^x (kx - \lambda(e^k - 1))}{x!}\\
&= \frac{1}{e^{\lambda e^k}}\Big( k \sum_{x \in \N}\frac{(e^k \lambda)^x}{(x-1)!} - e^{\lambda e^k} \lambda (e^k - 1) \Big)\\
&= \lambda \big( k e^k - e^k + 1\big).
\end{align*}
Further, it can be easily checked that the detailed balance condition \eqref{eq:dbforbd} yields that
\begin{align*}
\mathcal{E}(f_k,\log f_k)&= \sum_{x \in \N_0} a(x) \big(f_k(x+1)-f_k(x)\big)\big(\log (f_k (x+1))-\log (f_k (x))\big) \pi(x)\\
&=\frac{\lambda}{e^{\lambda e^k}} \sum_{x \in \N_0} \big( e^{k(x+1)} - e^{kx} \big)\big( k(x+1) - kx \big) \frac{\lambda^x}{x!} \\
&= \lambda k  (e^k - 1).
\end{align*}
From this, we can see that
\begin{equation*}
\frac{\mathrm{Ent}_\mu(f_k)}{\mathcal{I}(f_k)} \to 1, \, \text{ as } k \to \infty. 
\end{equation*}
Consequently, if $\Phi$  grows slower than linear at $\infty$, $EI(\Phi)$ fails for the Poisson case of a birth-death process. In particular, the Poisson case of a birth-death process  not only does not satisfy the $CD_\Upsilon(\kappa,n)$ condition (cf. Example \ref{ex:birthdeath}), but also fails on the level of the corresponding entropy-information inequality. 
\end{example}
In a somewhat similar fashion, the next result shows quite remarkable consequences of boundedness of the growth function and of the mapping $M_1$, respectively.
\begin{theorem}\label{statespacetheorem}
Let $L$ satisfy $EI(\Phi)$, then the following assertions hold true.
\begin{itemize}
\item[(i)] If $\Phi$ is bounded, then the state space $X$ is finite and the estimate 
\begin{equation}\label{eq:Entvsl1bound}
\mathrm{Ent}_\mu(f) \leq \lim\limits_{r \to \infty}\Phi(r) \Vert f \Vert_1
\end{equation}
holds true for any positive $f \in \R^X$.
\item[(ii)] If $M_{1,\sup}<\infty$ and the state space $X$ is infinite, then $\Phi$ grows linearly as $r \to \infty$.
\end{itemize}
\end{theorem}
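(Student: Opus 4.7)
The plan for both parts is to apply $EI(\Phi)$ to explicit test densities that approximate an indicator of a single point (in (i)) or a two-level plateau (in (ii)). The conceptual key is that for such test functions the Fisher information is concentrated on the boundary of a small set, so under the hypotheses of (i) or (ii) it can be controlled by the measure of that set.

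For (i), set $L:=\lim_{r\to\infty}\Phi(r)$. Since $\Phi'$ is non-negative, non-increasing and $\Phi$ is bounded, one has $\Phi'(r)\to 0$; moreover concavity at $2r$ gives $\Phi(r)\le \Phi(2r)-r\Phi'(2r)$, so $r\Phi'(2r)\le\Phi(2r)-\Phi(r)\to 0$, which is equivalent to $r\Phi'(r)\to 0$, hence $\Theta(r)=\Phi(r)-r\Phi'(r)\to L$. Passing $r\to\infty$ in the linearised form \eqref{eq:EIPhinonnormalized} yields $\mathrm{Ent}_\mu(f)\le L\,\Vert f\Vert_1$ for every strictly positive $f\in\ell^1(\mu)$ with $\mathcal{I}(f)<\infty$. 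Testing this with $f_\epsilon:=\mathds{1}_{\{x_0\}}+\epsilon\,\mathds{1}_X$, a short computation using detailed balance gives $\mathcal{I}(f_\epsilon)=M_1(x_0)\pi(x_0)\bigl(\log(1+\epsilon)-\log\epsilon\bigr)<\infty$, while as $\epsilon\downarrow 0$ one has $\mathrm{Ent}_\mu(f_\epsilon)\to-\pi(x_0)\log\pi(x_0)$ and $\Vert f_\epsilon\Vert_1\to\pi(x_0)$. In the limit this gives $\pi(x_0)\ge e^{-L}$ for every $x_0\in X$, and $\mu(X)=1$ then forces $\vert X\vert\le e^L<\infty$. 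The inequality $\mathrm{Ent}_\mu(f)\le L\,\Vert f\Vert_1$ for arbitrary positive $f\in\R^X$ follows on the (now finite) $X$ by the scaling \eqref{eq:scalingEntro} applied to $f/\Vert f\Vert_1\in\mathcal{P}(X)$, combined with a regularisation $f+\delta\,\mathds{1}_X$ and dominated convergence to handle possible zeros.

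For (ii), the contrapositive of (i) gives that $\Phi$ is unbounded. Since $X$ is infinite and $\mu$ a probability measure, there exists $(x_n)\subset X$ with $t_n:=\pi(x_n)\to 0$. The plan is to consider the plateau densities
\[
f_n := \frac{1+t_n}{2t_n}\,\mathds{1}_{\{x_n\}}+\frac{1}{2}\,\mathds{1}_{X\setminus\{x_n\}}\in\mathcal{P}_*(X).
\]
A direct computation (noting $\int f_n\,\mathrm{d}\mu=1$) yields
\[
\mathrm{Ent}_\mu(f_n)=\frac{1+t_n}{2}\log\frac{1+t_n}{2t_n}-\frac{1-t_n}{2}\log 2=\frac{1}{2}|\log t_n|+O(1),
\]
while the Fisher information is supported only on edges incident to $x_n$, and detailed balance yields
\[
\mathcal{I}(f_n)=\frac{M_1(x_n)}{2}\bigl(\log(1+t_n)-\log t_n\bigr)\le \frac{M_{1,\sup}}{2}\,|\log t_n|\,\bigl(1+o(1)\bigr).
\]
Hence $\mathrm{Ent}_\mu(f_n)/\mathcal{I}(f_n)\to 1/M_{1,\sup}$. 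Since $\mathrm{Ent}_\mu(f_n)\to\infty$ and $\Phi$ is finite on $(0,\infty)$, $EI(\Phi)$ forces $\mathcal{I}(f_n)\to\infty$. Concavity of $\Phi$ together with $\Phi\ge 0$ implies that $r\mapsto\Phi(r)/r$ is non-increasing on $(0,\infty)$, so its limit at $+\infty$ exists, and
\[
\lim_{r\to\infty}\frac{\Phi(r)}{r}=\lim_n\frac{\Phi(\mathcal{I}(f_n))}{\mathcal{I}(f_n)}\ge\lim_n\frac{\mathrm{Ent}_\mu(f_n)}{\mathcal{I}(f_n)}=\frac{1}{M_{1,\sup}}>0,
\]
which is the claimed linear growth. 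The decisive step is the boundary bound $\sum_{y\ne x_n}k(x_n,y)\pi(x_n)\le M_{1,\sup}\,\pi(x_n)$, which prevents $\mathcal{I}(f_n)$ from blowing up faster than $\mathrm{Ent}_\mu(f_n)$; this is also the main technical obstacle, since for a less carefully chosen approximation such as the $f_\epsilon$ of part (i) the Fisher information grows like $|\log\epsilon|$ while the entropy stays bounded, from which only unboundedness but not linear growth of $\Phi$ can be deduced.
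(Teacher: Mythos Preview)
Your proof is correct and follows essentially the same strategy as the paper: both use two-level test functions concentrated at a single point $x$ (the paper's $f_x$ with parameter $\varepsilon$ is, for $\varepsilon=\tfrac12$, exactly your $f_n$ in part (ii)). Two minor remarks. In (i) you reach $\mathrm{Ent}_\mu(f)\le L\Vert f\Vert_1$ via the linearised form together with the lemma $r\Phi'(r)\to 0$; the paper does this more directly from $\Phi(\mathcal I(f))\le L$ and scaling, so your detour through $\Theta(r)\to L$ is unnecessary (though it does yield the explicit cardinality bound $|X|\le e^{L}$, which the paper does not state). In (ii) your claim $\mathrm{Ent}_\mu(f_n)/\mathcal I(f_n)\to 1/M_{1,\sup}$ is slightly overstated, since $M_1(x_n)$ need not tend to $M_{1,\sup}$; what you actually show (and what suffices) is $\liminf_n \mathrm{Ent}_\mu(f_n)/\mathcal I(f_n)\ge 1/M_{1,\sup}$.
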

\begin{proof}
We consider for $x \in X$ and some $\varepsilon \in (0,1)$ the function
\begin{equation*}
f_x(y) = \left\{\begin{array}{ll}
\varepsilon ,& y \neq x \\
\frac{1-\varepsilon(1-\pi(x))}{\pi(x)} ,& y=x
\end{array}\right. .
\end{equation*}
One readily verifies that $f_x \in P_*(X)$ for any $x\in X$. We have
\begin{align}\label{eq:Entfx}
\mathrm{Ent}_\mu(f_x) = \int_X f_x \log f_x\, \mathrm{d}\mu = \big(1-\varepsilon(1-\pi(x))\big) \log \frac{1-\varepsilon(1-\pi(x))}{\pi(x)} + \varepsilon \log \varepsilon \,(1-\pi(x)),
\end{align}
for any $x\in X$.  Moreover, we observe that
\begin{align*}
2 \mathcal{I}(f_x) &= \sum_{z,y \in  X} k(z,y) \big( f_x(y)-f_x(z)\big) \big( \log f_x(y) - \log f_x(z) \big) \pi(z) \\
&= \sum_{y \in X} k(x,y) \big( f_x(y)-f_x(x)\big) \big( \log f_x(y) - \log f_x(x) \big) \pi(x) \\
&\quad+\sum_{z \in X} k(z,x) \big( f_x(x)-f_x(z)\big) \big( \log f_x(x) - \log f_x(z) \big) \pi(z)\\
&= 2 \pi(x) \sum_{y \in X} k(x,y) \big( f_x(y)-f_x(x)\big) \big( \log f_x(y) - \log f_x(x) \big),
\end{align*}
where we have applied the detailed balance condition in the last step. Hence, we conclude for any $x \in X$ that
\begin{equation}\label{eq:Fisherfx}
\begin{split}
\mathcal{I}(f_x)&= \pi(x) M_1(x) \Big( \varepsilon - \frac{1-\varepsilon(1-\pi(x))}{\pi(x)}\Big) \log  \frac{\varepsilon \pi(x)}{1-\varepsilon(1-\pi(x))}\\ 
&= M_1(x) (1-\varepsilon) \log  \frac{1-\varepsilon(1-\pi(x))}{\varepsilon \pi(x)}.
\end{split}
\end{equation}
After this preliminary work we now show the first assertion. The estimate \eqref{eq:Entvsl1bound} follows from the definition of $EI(\Phi)$ for any $f \in \mathcal{P}_*(X)$ with $\mathrm{Ent}_\mu(f)<\infty$ and $\mathcal{I}(f)<\infty$. We then extend \eqref{eq:Entvsl1bound} to the more general case of positive $f\in \ell^1(\mu)$ with $\mathrm{Ent}_\mu(f)<\infty$ and $\mathcal{I}(f)<\infty$  by applying \eqref{eq:Entvsl1bound} to $\frac{f}{\Vert f \Vert_1}$. It remains to show that $X$ is finite. Assuming for contradiction that $X$ is infinite, we find a sequence $(x_m)_{m \in \N} \subset X$ such that $\pi(x_m)\to 0$ as $m \to \infty$, since $\mu$ is assumed to be a probability measure. We infer from \eqref{eq:Entfx} that $\mathrm{Ent}_\mu(f_{x_m})\to \infty$ as $m \to \infty$, which contradicts what has been shown before.

Let us now turn to the second assertion. Choosing a sequence $(x_m)_{m \in \N} \subset X$ as above and assuming w.l.o.g. that $\pi(x_m)<1$ for any $m \in \N$, we read from \eqref{eq:Entfx} that
\begin{align*}
\mathrm{Ent}_\mu(f_{x_m}) \geq  \log \frac{1}{\pi(x_m)} + \log(1-\varepsilon) + \varepsilon \log \varepsilon
\end{align*}
and from \eqref{eq:Fisherfx} that
\begin{align*}
\mathcal{I}(f_{x_m}) &\leq M_{1,\sup} (1-\varepsilon) \big( \log  \frac{1}{\pi(x_m)} - \log  \frac{\varepsilon}{1-\varepsilon(1-\pi(x_m))}\big) \\ 
&\leq M_{1,\sup} (1-\varepsilon) \big( \log \frac{1}{\pi(x_m)} -\log \varepsilon \big).
\end{align*}
Thus, we have
\begin{equation*}
\frac{\mathrm{Ent}_\mu(f_{x_m})}{\mathcal{I}(f_{x_m})} \geq \frac{ \log \frac{1}{\pi(x_m)} + \log(1-\varepsilon) + \varepsilon \log \varepsilon}{ M_{1,\sup} (1-\varepsilon) \big( \log \frac{1}{\pi(x_m)} -\log \varepsilon \big)}.
\end{equation*}
The right hand side of the latter estimate converges to $\frac{1}{(1-\varepsilon)M_{1,\sup}}$ as $m\to \infty$. This yields that there exists a constant $C>0$ and some $M \in \N$ such that
\begin{equation*}
\frac{\mathrm{Ent}_\mu(f_{x_m})}{\mathcal{I}(f_{x_m})} \geq C >0
\end{equation*}
for any $m \geq M$. Consequently, it follows from $EI(\Phi)$ that 
\begin{equation*}
C\, \mathcal{I}(f_{x_m}) \leq \Phi(\mathcal{I}(f_{x_m}))
\end{equation*}
for all $m \geq M$. Since $\mathcal{I}(f_{x_m})\to \infty$ as $m \to \infty$ and $\Phi$ is concave, we conclude that $\Phi(r)$ must grow linearly as $r \to \infty$.
\end{proof}

While the corresponding growth function in the case of the $CD_\Upsilon(\kappa,n)$ condition (with $\kappa>0$ and $n<\infty$) grows logarithmically at $\infty$, we have seen in Proposition \ref{prop:EIforpowertype} that for power type $CD$-functions of higher order the respective growth function is bounded. Combining Proposition \ref{prop:EIforpowertype} with Theorem \ref{statespacetheorem} leads to the following interesting observation.
\begin{corollary}\label{finitespaceforhigherpowers}
Let $M_1 \in \ell^2(\mu)$, $M_2 \in \ell^1(\mu)$ and the Markov chain generated by $L$ be positive recurrent. Further, let $L$ satisfy $CD_\Upsilon(\kappa,F)$ with  $\kappa>0$ and $F(r)=\frac{r^{1+\delta}}{n}$, $r \geq 0$, for some $\delta\geq 1$ and $n \in (0,\infty)$. Then the state space $X$ is finite if and only if either $\delta>1$ or $\delta=1$ and $M_{1,\sup}<\infty$.
\end{corollary}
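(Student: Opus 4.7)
The natural plan is to route everything through the entropy--information inequality provided by Proposition \ref{prop:EIforpowertype}, and then read off the finiteness of $X$ from the qualitative behavior of the growth function via Theorem \ref{statespacetheorem}. Since the hypotheses of Proposition \ref{prop:EIforpowertype} are exactly those assumed here, $L$ satisfies $EI(\Phi)$ with $\Phi$ given by \eqref{eq:powertypegrowthfunction}, and Proposition \ref{prop:EIforpowertype}(i) tells us that $\Phi$ is bounded precisely when $\delta>1$.

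For the implication from the condition on the right to $X$ being finite, I would split into two cases. If $\delta>1$, then $\Phi$ is bounded, so Theorem \ref{statespacetheorem}(i) applies directly and forces $X$ to be finite. If $\delta=1$, then Corollary \ref{logEIunderCD(kappa,n)} gives the explicit form $\Phi(r)=\frac{n}{2}\log\bigl(1+\frac{r}{\kappa n}\bigr)$, which grows only logarithmically at $\infty$, hence strictly sub-linearly. If moreover $M_{1,\sup}<\infty$, then the contrapositive of Theorem \ref{statespacetheorem}(ii) immediately rules out $X$ being infinite, because otherwise $\Phi$ would have to grow linearly at $\infty$, contradicting its logarithmic growth. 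Thus $X$ must be finite in this case as well.

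The converse implication is in fact essentially a tautology under the standing assumption \eqref{eq:M1assumption} that $M_1(x)<\infty$ for every $x$: if $X$ is finite, then trivially $M_{1,\sup}=\max_{x\in X}M_1(x)<\infty$, so for the given $\delta\geq 1$ we automatically have either $\delta>1$, or $\delta=1$ together with $M_{1,\sup}<\infty$. No substantive argument is needed here, only to record that this direction is free.

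There is no real obstacle; the whole statement is a packaging of two previously proven results (Proposition \ref{prop:EIforpowertype} and Theorem \ref{statespacetheorem}), and the only thing to be careful about is that Theorem \ref{statespacetheorem}(ii) is used in its contrapositive form and requires the assumption $M_{1,\sup}<\infty$ in order to deduce linear growth of $\Phi$ from infiniteness of $X$, which is precisely why the case $\delta=1$ needs the extra hypothesis on $M_{1,\sup}$ while the case $\delta>1$ does not.
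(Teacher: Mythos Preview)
Your argument is correct and matches the paper's approach exactly: the corollary is stated right after the sentence ``Combining Proposition \ref{prop:EIforpowertype} with Theorem \ref{statespacetheorem} leads to the following interesting observation,'' and that combination is precisely what you carry out. Your handling of the two cases ($\delta>1$ via part (i), and $\delta=1$ with $M_{1,\sup}<\infty$ via the contrapositive of part (ii) together with the logarithmic growth from Corollary \ref{logEIunderCD(kappa,n)}) is spot-on, and your observation that the converse direction is trivial under the standing assumption \eqref{eq:M1assumption} is exactly right.
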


\section{Ultracontractive Bounds under Entropy-Information inequalities}\label{sec:ultracontractive}
In the classical diffusive setting,  entropy-energy inequalities imply under the condition that $r \mapsto \frac{\Phi'(r)}{r}$ is integrable at $\infty$ ultracontractivity of the semigroup, cf. \cite{BGL}. But as the entropy-information inequality compares to entropy-energy inequalities like the modified logarithmic Sobolev inequality to logarithmic Sobolev inequalities, it is natural to expect that ultracontractive bounds come in the form of the hypercontractivity bounds from \cite{BoTe} for the modified logarithmic Sobolev inequality, i.e. not with respect to the respective norm of the semigroup, but of $e^{P_t f}$ instead. We recall the following auxiliary result, whose proof is contained in the proof of \cite[Theorem 7.1]{BoTe}, where the authors have considered an even more general setting.  Note in fact that the assumptions (1)--(4) of \cite[Section 7]{BoTe} are satisfied provided that $f \in \ell^\infty(X)$. Moreover, $f\in \ell^\infty(X)$ implies that  we have $e^{\eta(t)P_t f} \in \ell^{\infty,+}(X)$ for any fixed $t >0$ (with $\eta(t)\in \R $), which yields that $\mathcal{I}(e^{\eta(t)P_t f}) <\infty$ if we assume in addition that $M_1 \in \ell^1(\mu)$.
\begin{lemma}\label{lem:BoTeLemma}
Let $M_1 \in \ell^1(\mu)$, $f \in \ell^\infty(X)$, $t>0$ and  $q:(0,C_0)\to (0,\infty)$ be some differentiable mapping, where $C_0 \in (0,\infty]$. Then we have
\begin{equation*}
q(t) \Vert e^{P_t f}\Vert^{q(t)-1}_{q(t)}\frac{d}{dt}\Vert e^{P_t f}\Vert_{q(t)} = \frac{q'(t)}{q(t)}\mathrm{Ent}_\mu(e^{q(t)P_t f})-\mathcal{I}(e^{q(t)P_t f}).
\end{equation*} 
\end{lemma}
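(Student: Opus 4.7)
The plan is to reduce the claim to the chain-rule substitute \eqref{eq:FFI} by a direct calculation. I would first set $Q(t) := \int_X e^{q(t) P_t f}\,\mathrm{d}\mu = \Vert e^{P_t f}\Vert_{q(t)}^{q(t)}$ and take a logarithmic derivative of the identity $\Vert e^{P_t f}\Vert_{q(t)} = Q(t)^{1/q(t)}$, which yields
\[
\frac{d}{dt}\Vert e^{P_t f}\Vert_{q(t)} \;=\; \Vert e^{P_t f}\Vert_{q(t)}\left[\frac{Q'(t)}{q(t)Q(t)} - \frac{q'(t)\log Q(t)}{q(t)^{2}}\right].
\]
Multiplying by $q(t)\Vert e^{P_t f}\Vert_{q(t)}^{q(t)-1}$ and using $\Vert e^{P_t f}\Vert_{q(t)}^{q(t)} = Q(t)$, the left-hand side of the claim turns into $Q'(t) - \frac{q'(t)}{q(t)}Q(t)\log Q(t)$, and the task is to match this with $\frac{q'(t)}{q(t)}\mathrm{Ent}_\mu(e^{q(t)P_t f}) - \mathcal{I}(e^{q(t)P_t f})$.

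Next I would differentiate $Q(t)$ under the summation, giving $Q'(t) = \int_X\bigl(q'(t)P_t f + q(t)LP_t f\bigr)\,e^{q(t)P_t f}\,\mathrm{d}\mu$, and expand the entropy via its definition as $\mathrm{Ent}_\mu(e^{q(t)P_t f}) = q(t)\int_X P_t f\,e^{q(t)P_t f}\,\mathrm{d}\mu - Q(t)\log Q(t)$. The two $Q(t)\log Q(t)$ contributions and the two multiples of $\int_X P_t f\,e^{q(t)P_t f}\,\mathrm{d}\mu$ then cancel, so the claim reduces to the single identity
\[
\int_X q(t)\,LP_t f\cdot e^{q(t)P_t f}\,\mathrm{d}\mu \;=\; -\mathcal{I}\bigl(e^{q(t)P_t f}\bigr).
\]
To establish this I would apply \eqref{eq:FFI} to $g := e^{q(t)P_t f}\in\ell^{\infty,+}(X)$, for which $\log g = q(t)P_t f$, to obtain $q(t)LP_t f = g^{-1}Lg - \Psi_\Upsilon(q(t)P_t f)$. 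Multiplying by $g$, integrating against $\mu$, using invariance $\int_X Lg\,\mathrm{d}\mu = 0$ and the representation $\mathcal{I}(g) = \int_X g\,\Psi_\Upsilon(\log g)\,\mathrm{d}\mu$ recalled at the beginning of Section \ref{sec:EIinequalities} then closes the argument.

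The main obstacle I foresee is purely an integrability/exchange-of-limits check: both the differentiation of $Q(t)$ under the summation and the identity $\int_X Lg\,\mathrm{d}\mu = 0$ rely on absolute convergence. The hypothesis $f\in \ell^\infty(X)$ forces $P_t f$, and hence $g$, to stay within a compact subinterval of $(0,\infty)$ uniformly in $x$; together with $M_1\in \ell^1(\mu)$ this produces $\ell^1(\mu)$-integrable dominants for $Lh$ and for $\partial_t\bigl(e^{q(t)P_t(\cdot) f}\bigr)$ whenever $h$ is bounded, which is exactly what is needed to pass both the time derivative and the $L$-operator past the $\mu$-integral. Once these routine checks are carried out, the remaining computation is purely algebraic.
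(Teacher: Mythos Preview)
Your proposal is correct and follows essentially the same route as the paper's proof: both differentiate $\Vert e^{P_t f}\Vert_{q(t)} = Q(t)^{1/q(t)}$ directly, pass the derivative under the integral (justified by $M_1\in\ell^1(\mu)$ and $f\in\ell^\infty(X)$), and then regroup terms into the entropy and Fisher information. The only cosmetic difference is that you identify $\int_X q(t)\,LP_t f\cdot e^{q(t)P_t f}\,\mathrm{d}\mu = -\mathcal{I}(e^{q(t)P_t f})$ via the chain-rule substitute \eqref{eq:FFI} together with invariance, whereas the more direct observation (implicit in the paper) is that this integral equals $\int_X g\,L(\log g)\,\mathrm{d}\mu = -\mathcal{E}(g,\log g) = -\mathcal{I}(g)$ by symmetry of $L$; both arguments are equivalent and equally short.
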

\begin{proof}
We briefly repeat the calculation of \cite{BoTe} for the reader's convenience and refer for more details to \cite[Section 7]{BoTe}. Note that the assumption of $M_1 \in \ell^1(\mu)$ in fact justifies to interchange integration and differentation in the lines below. We have
\begin{align*}
\frac{d}{dt}\Big(\int_X e^{q(t)P_t f}&\mathrm{d}\mu\Big)^{\frac{1}{q(t)}}\\ &= \Vert e^{P_t f} \Vert_{q(t)} \bigg( \frac{\int_X e^{q(t)P_t f} \big(q'(t)P_t f + q(t) L P_t f \big) \mathrm{d}\mu}{q(t)\int_X e^{q(t)P_t f}\mathrm{d}\mu} - \frac{q'(t)\log \int_X e^{q(t)P_t f}\mathrm{d}\mu}{q(t)^2}\bigg)\\
&= \frac{\Vert e^{P_t f} \Vert_{q(t)}^{1-q(t)}}{q(t)}\Big(\ \frac{q'(t)}{q(t)}\mathrm{Ent}_\mu(e^{q(t)P_t f})-\mathcal{I}(e^{q(t)P_t f}) \Big).
\end{align*}
\end{proof}
\begin{theorem}\label{ultracontractivityresult}
Let $L$ satisfy $EI(\Phi)$ and $M_1 \in \ell^1(\mu)$. Then for every $1 \leq p \leq q \leq \infty$, every $f \in \ell^\infty(X)$ and every $\varrho>0$
\begin{equation*}
\Vert e^{P_{t(\varrho)}f} \Vert_{q} \leq \Vert e^f \Vert_p e^{m(\varrho)}
\end{equation*}
holds true, where
\begin{equation}\label{eq:trhomrho}
t(\varrho)= \int_p^q \frac{\Phi'(\varrho r)}{r}\mathrm{d}r, \,\,
m(\varrho)= \frac{\Phi(\varrho p)}{p}-\frac{\Phi(\varrho q)}{q}.
\end{equation}
Here the case of $q=\infty$ has to be understood in the limit  $q\to \infty$ in both formulas in \eqref{eq:trhomrho}  and can be reached only if $r\mapsto \frac{\Phi'(r)}{r}$ is integrable at $\infty$.
\end{theorem}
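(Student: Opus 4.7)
The strategy is to combine Lemma~\ref{lem:BoTeLemma} with the linearized form of the entropy-information inequality from Remark~(iii), choosing an interpolating exponent curve $\tilde q(t)$ between $p$ and $q$ that makes the Fisher information contribution cancel exactly. What remains is an integrable quantity that, after a substitution and a telescoping identity, reproduces precisely $m(\varrho)$.

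First I would introduce $\tilde q:[0,T]\to[p,q]$ as the (strictly increasing) inverse of the map
$$
\tilde q\;\longmapsto\;\int_p^{\tilde q}\frac{\Phi'(\varrho r)}{r}\,\d r,
$$
so that $\tilde q(0)=p$, $\tilde q(T)=q$ with $T=t(\varrho)$, and $\tilde q$ satisfies the ODE
$$
\tilde q'(t)=\frac{\tilde q(t)}{\Phi'(\varrho \tilde q(t))}, \qquad t\in[0,T].
$$
Since $\Phi$ is strictly increasing and concave, $\Phi'>0$, so this ODE is well posed on $[0,T]$. The endpoint $q=\infty$ is reached in finite time precisely when $r\mapsto\Phi'(r)/r$ is integrable at infinity, as the statement asserts.

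Second, for each $t\in[0,T]$, observe that $f\in\ell^\infty(X)$ together with $M_1\in\ell^1(\mu)$ guarantees that $g_t:=e^{\tilde q(t)P_tf}\in\ell^{\infty,+}(X)$, $\mathrm{Ent}_\mu(g_t)<\infty$ and $\mathcal{I}(g_t)<\infty$. Applying Remark~(iii) with parameter $r=\varrho\tilde q(t)$ to the (unnormalized) positive function $g_t$ yields
$$
\mathrm{Ent}_\mu(g_t)\;\leq\;\Phi'(\varrho\tilde q(t))\,\mathcal{I}(g_t)+\Theta(\varrho\tilde q(t))\int_X g_t\,\d\mu.
$$
Multiplying by $\tilde q'(t)/\tilde q(t)$ and comparing with the right-hand side of Lemma~\ref{lem:BoTeLemma} (applied to the curve $\tilde q$), the crucial identity $\frac{\tilde q'(t)}{\tilde q(t)}\Phi'(\varrho\tilde q(t))=1$ built into the ODE causes the Fisher information contributions to cancel and produces
$$
\tilde q(t)\,\|e^{P_tf}\|_{\tilde q(t)}^{\tilde q(t)-1}\,\tfrac{d}{dt}\|e^{P_tf}\|_{\tilde q(t)}\;\leq\;\frac{\tilde q'(t)}{\tilde q(t)}\,\Theta(\varrho\tilde q(t))\,\|e^{P_tf}\|_{\tilde q(t)}^{\tilde q(t)},
$$
which after dividing through rewrites as
$$
\tfrac{d}{dt}\log\|e^{P_tf}\|_{\tilde q(t)}\;\leq\;\frac{\tilde q'(t)}{\tilde q(t)^2}\,\Theta(\varrho\tilde q(t)).
$$

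Third, integrating on $[0,T]$ and substituting $r=\tilde q(t)$ (so $\d r=\tilde q'(t)\,\d t$) transforms the upper bound into $\int_p^q \Theta(\varrho r)/r^2\,\d r$. Using $\Theta(\varrho r)=\Phi(\varrho r)-\varrho r\Phi'(\varrho r)$ together with the elementary identity
$$
\frac{d}{dr}\frac{\Phi(\varrho r)}{r}\;=\;-\frac{\Phi(\varrho r)-\varrho r\Phi'(\varrho r)}{r^2}\;=\;-\frac{\Theta(\varrho r)}{r^2},
$$
the integral telescopes to $\Phi(\varrho p)/p-\Phi(\varrho q)/q=m(\varrho)$, and exponentiating gives the claimed ultracontractivity estimate. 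For the case $q=\infty$, one first proves the statement for finite $q$ and passes to the limit $q\to\infty$, using monotone convergence of the $\ell^{q}$-norms to the $\ell^\infty$-norm and the integrability hypothesis to ensure $t(\varrho)$ and $m(\varrho)$ remain finite.

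The main obstacle is the justification of Lemma~\ref{lem:BoTeLemma} along the nonconstant exponent curve $\tilde q(\cdot)$ (that is, the interchange of differentiation and summation over $X$ in the norm $\|e^{P_tf}\|_{\tilde q(t)}$), which is exactly what the assumption $M_1\in\ell^1(\mu)$ is designed to handle; once this step is granted, the remaining argument reduces to the cancellation choice for the ODE and the telescoping identity for $\Theta$.
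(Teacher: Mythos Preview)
Your proposal is correct and follows essentially the same route as the paper's proof: apply Lemma~\ref{lem:BoTeLemma} along a time-dependent exponent curve, use the linearized entropy-information inequality \eqref{eq:EIPhinonnormalized} with parameter $r=\varrho\tilde q(t)$, choose $\tilde q$ to satisfy the ODE $\tilde q'\Phi'(\varrho\tilde q)=\tilde q$ so that the Fisher information cancels, and then integrate. The only cosmetic differences are that the paper first keeps the linearization parameter $r(q)$ unspecified and only sets $r(q)=\varrho q$ at the end, and that it computes $m(\varrho)$ via integration by parts rather than via your direct antiderivative identity for $\Theta(\varrho r)/r^2$; these are the same computation in different clothing.
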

\begin{proof}
We define $\Lambda(t)=\Vert e^{P_t f}\Vert_{q(t)}$ for a strictly increasing and differentiable  $q:(0,C_0)\to (0,\infty)$, which, together with $C_0$, will be specified below.  By Lemma \ref {lem:BoTeLemma} we have
\begin{equation*}
q(t) \Lambda(t)^{q(t)-1} \Lambda'(t) = \frac{q'(t)}{q(t)}\mathrm{Ent}_\mu(e^{q(t)P_t f}) - \mathcal{I}(e^{q(t)P_t f}).
\end{equation*}
Applying $EI(\Phi)$ in the form of \eqref{eq:EIPhinonnormalized} to $e^{q(t)P_t f}$  yields for any $r>0$
\begin{equation*}
q(t) \Lambda(t)^{q(t)-1} \Lambda'(t)\leq \mathcal{I}(e^{q(t)P_t f})\big( \frac{q'(t)}{q(t)}\Phi'(r)-1\big) + \frac{q'(t)}{q(t)}\Theta(r) \Lambda(t)^{q(t)}.
\end{equation*}
For given $r=r(q)$ (which will be made precise below) we choose $q(t)$ such that the differential equation $q'\Phi'(r(q))=q$ is satisfied, which in fact can be done by separation of variables. Indeed, let $T:(p,\infty)\to (0,C_0)$ be defined as $T(s)= \int_p^s \frac{\Phi'(r(q))}{q}\mathrm{d}q$, where $C_0=\int_p^\infty \frac{\Phi'(r(q))}{q}\mathrm{d}q$ (the value $C_0=\infty$ is allowed). Then $T$ is bijective and  $q:(0,C_0)\to (p,\infty)$, given by $q(t)=T^{-1}(t)$, solves the ODE mentioned above on $(0,C_0)$. In particular,  $q(0)=p$ extends $q$ continuously onto $[0,C_0)$.  We conclude that 
\begin{equation*}
\Lambda'(t) \leq \frac{q'(t)}{q(t)^2}\Theta(r(q(t))) \Lambda(t)
\end{equation*}
holds for any $t\in(0,C_0)$, which is equivalent to the differential inequality
\begin{equation}\label{eq:diffineqofrultra}
(\log \Lambda)' \leq \frac{q'}{q^2}\Theta(r(q)).
\end{equation}
Integrating \eqref{eq:diffineqofrultra} yields
\begin{align*}
\log (\Lambda (t)) &\leq \log(\Lambda(0)) + \int_0^t \frac{q'(s) \Theta(r(q(s)))}{q(s)^2}\mathrm{d}s \\
&= \log(\Lambda(0)) + \int_{q(0)}^{q(t)} \frac{\Theta(r(q))}{q^2}\mathrm{d}q.
\end{align*}
Note that we can write
\begin{align*}
t= \int_0^t \frac{q'(s)\Phi'(r(q(s)))}{q(s)}\mathrm{d}s = \int_{q(0)}^{q(t)} \frac{\Phi'(r(q))}{q}\mathrm{d}q.
\end{align*}
Choosing $r(q)= \varrho q$ establishes the formula for $t(\varrho)$. Moreover, recalling that $\Theta(s)=\Phi(s)-\Phi'(s)s$, $s \in (0,\infty)$, we deduce from a simple application of integration by parts that
\begin{align*}
\int_p^q \frac{\Theta(\varrho s)}{s^2}\mathrm{d}s = \int_p^q \frac{\Phi(\varrho s)}{s^2}\mathrm{d}s - \varrho t(\varrho) = \frac{\Phi(\varrho p)}{p} - \frac{\Phi(\varrho q)}{q}, 
\end{align*}
which yields the claim.
\end{proof}
We see from Theorem \ref{ultracontractivityresult} that $q=\infty$ can be reached provided that $\frac{\Phi'(r)}{r}$ is integrable at $\infty$. In particular, in case of the modified logarithmic Sobolev inequality, Theorem \ref{ultracontractivityresult} does not lead to ultracontractive bounds, which is consistent to the role of the logarithmic Sobolev inequality in the diffusive setting. Otherwise, for a growth function $\Phi$ that behaves as $r^\alpha$ with $0<\alpha<1$ at $\infty$ we have that $\frac{\Phi'(r)}{r}$ is integrable  at $\infty$. In this sense, the modified logarithmic Sobolev inequality constitutes an extreme case.

The growth function resulting from $CD_\Upsilon(\kappa,F)$, with $F$ being a power-type $CD$-function from Proposition \ref{prop:EIforpowertype}, satisfies the integrability condition that we have mentioned throughout the previous lines. This fact can be seen from the identity \eqref{eq:derivativepowertypegrowth}.  We close this section with an application of Theorem \ref{ultracontractivityresult} in this particular context.

\begin{corollary}
Let $M_1 \in \ell^2(\mu)$, $M_2 \in \ell^1(\mu)$ and the Markov chain generated by $L$ be positive recurrent. Further, let $L$ satisfy $CD_\Upsilon(\kappa,F)$ with $\kappa>0$ and $F(r)=\frac{r^{1+\delta}}{n}$, $r\geq 0$, for some $\delta\geq 1$ and $n \in (0,\infty)$. Then we have for any $t>0$ that
\begin{equation}\label{eq:ultraconpowertype}
\Vert e^{P_t f} \Vert_\infty \leq e^{\Phi\big(\sqrt[\delta]{\frac{n}{2\delta t}}\big)} \Vert e^f \Vert_1
\end{equation}
holds for any $f \in \ell^\infty(X)$, where $\Phi$ denotes the growth function given by \eqref{eq:powertypegrowthfunction}.
In particular, in case of the $CD_\Upsilon(\kappa,n)$ condition, \eqref{eq:ultraconpowertype} reads as
\begin{equation}\label{eq:ultraconCDsquare}
\Vert e^{P_t f} \Vert_\infty \leq \Big( 1+ \frac{1}{2\kappa t}\Big)^\frac{n}{2} \Vert e^f \Vert_1 .
\end{equation} 
\end{corollary}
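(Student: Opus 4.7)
The plan is to invoke Theorem~\ref{ultracontractivityresult} with $p=1$ and $q=\infty$, applied to the growth function $\Phi$ from \eqref{eq:powertypegrowthfunction}, whose admissibility via $EI(\Phi)$ is guaranteed by Proposition~\ref{prop:EIforpowertype}. Two preliminary checks are needed. First, the explicit formula $\Phi'(r)=\tfrac{n}{2(\kappa n+r^\delta)}$ from \eqref{eq:derivativepowertypegrowth} shows that $\Phi'(r)/r\sim \tfrac{n}{2}\,r^{-(1+\delta)}$ at infinity, so $r\mapsto\Phi'(r)/r$ is integrable at $\infty$ and the choice $q=\infty$ is permitted. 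Second, $\Phi$ itself is bounded when $\delta>1$ by Proposition~\ref{prop:EIforpowertype}(i), and grows only logarithmically when $\delta=1$ by Corollary~\ref{logEIunderCD(kappa,n)}; in either case $\lim_{q\to\infty}\Phi(\varrho q)/q=0$, so $m(\varrho)=\Phi(\varrho)$.

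Next, I would compute $t(\varrho)$ in closed form. The substitution $s=\varrho r$ (so that $dr/r=ds/s$) converts the defining integral into $t(\varrho)=\int_\varrho^\infty\Phi'(s)/s\,ds$, and a further substitution $u=s^\delta$ together with the partial-fraction identity $\tfrac{1}{u(\kappa n+u)}=\tfrac{1}{\kappa n}\bigl(\tfrac{1}{u}-\tfrac{1}{\kappa n+u}\bigr)$ yields
\[
t(\varrho)=\frac{1}{2\delta\kappa}\log\Big(1+\frac{\kappa n}{\varrho^\delta}\Big).
\]
Inverting the relation $t(\varrho)=t$ gives $\varrho^\delta=\kappa n/(e^{2\delta\kappa t}-1)$. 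The elementary inequality $e^x-1\geq x$ for $x\geq 0$ then produces the clean upper bound $\varrho\leq\sqrt[\delta]{n/(2\delta t)}$, and since $\Phi$ is increasing, $m(\varrho)=\Phi(\varrho)\leq \Phi\bigl(\sqrt[\delta]{n/(2\delta t)}\bigr)$. Feeding this into Theorem~\ref{ultracontractivityresult} yields \eqref{eq:ultraconpowertype}.

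For the special case $\delta=1$, Corollary~\ref{logEIunderCD(kappa,n)} provides the explicit form $\Phi(r)=\tfrac{n}{2}\log(1+r/(\kappa n))$. Here $\sqrt[\delta]{n/(2\delta t)}=n/(2t)$, and a direct evaluation gives $\Phi(n/(2t))=\tfrac{n}{2}\log(1+1/(2\kappa t))$; exponentiating produces the factor $(1+1/(2\kappa t))^{n/2}$, which is exactly \eqref{eq:ultraconCDsquare}.

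There is no real obstacle: once Theorem~\ref{ultracontractivityresult} and Proposition~\ref{prop:EIforpowertype} are in hand, the result reduces to an explicit computation of $t(\varrho)$ via a partial-fraction decomposition together with the simple bound $e^x-1\geq x$. The only mild subtlety is the verification of the two admissibility conditions (that $q=\infty$ is allowed and that $m(\varrho)$ collapses to $\Phi(\varrho)$), both of which follow immediately from the explicit form of $\Phi'$ and the growth/boundedness behaviour of $\Phi$ itself.
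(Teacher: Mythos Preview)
Your proposal is correct and follows essentially the same route as the paper: invoke Proposition~\ref{prop:EIforpowertype} to obtain $EI(\Phi)$, apply Theorem~\ref{ultracontractivityresult} with $p=1$, $q=\infty$, and use the explicit form of $\Phi'$ to control $t(\varrho)$ and $m(\varrho)=\Phi(\varrho)$. The only minor difference is that the paper bounds the integral $t(\varrho)$ directly by dropping $\kappa n$ from the denominator (obtaining $t(\varrho)\le \tfrac{n}{2\delta\varrho^\delta}$), whereas you compute $t(\varrho)=\tfrac{1}{2\delta\kappa}\log(1+\kappa n/\varrho^\delta)$ exactly via partial fractions and then invoke $e^x-1\ge x$; both yield the same bound $\varrho\le\sqrt[\delta]{n/(2\delta t)}$, and your explicit formula has the small bonus of making the bijectivity of $\varrho\mapsto t(\varrho)$ immediate.
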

\begin{proof}
Due to Proposition \ref{prop:EIforpowertype}, $L$ satisfies $EI(\Phi)$ with growth function given by \eqref{eq:powertypegrowthfunction}. We choose $p=1$ and $q=\infty$ (in the limit sense) in \eqref{eq:trhomrho}, recall the formula \eqref{eq:derivativepowertypegrowth} for the derivative of the growth function and observe for $\varrho>0$
\begin{equation*}
t(\varrho)= \frac{n}{2}\int_1^\infty \frac{1}{r(\kappa n +(\varrho r)^\delta)}\mathrm{d}r \leq \frac{n}{2\varrho^\delta}\int_1^\infty \frac{1}{r^{1+\delta}}\mathrm{d}r = \frac{n}{2\delta \varrho^\delta}.
\end{equation*}
From this we infer by monotonicity of the growth function that
\begin{equation*}
m(\varrho)= \Phi(\varrho) \leq \Phi\Big(\sqrt[\delta]{\frac{n}{2\delta t(\varrho)}}\Big).
\end{equation*}
Consequently, by Theorem \ref{ultracontractivityresult} we get that
\begin{equation*}
\Vert e^{P_{t(\varrho)} f} \Vert_\infty \leq e^{\Phi\big(\sqrt[\delta]{\frac{n}{2\delta t(\varrho)}}\big)} \Vert e^f \Vert_1
\end{equation*}
holds for any $\varrho>0$. But as the mapping $\varrho \mapsto t(\varrho)$, $\varrho>0$, is bijective onto $(0,\infty)$ since it is decreasing with $t(\varrho)\to 0$ as $\varrho \to \infty$ and $t(\varrho)\to \infty$ as $\varrho \to 0$, \eqref{eq:ultraconpowertype} follows. The special case of \eqref{eq:ultraconCDsquare} now can be established by the explicit formula for the growth function in the case of $\delta=1$, see Corollary \ref{logEIunderCD(kappa,n)}.
\end{proof}
\section{Exponential Integrability of Lipschitz functions and Diameter bounds}\label{sec:diameter}
Exponential integrability of Lipschitz functions and diameter bounds (see the definitions below) are both important properties to investigate in the classical theory of \cite{BGL}.  In order to reach finite diameter bounds in the diffusive setting,  Poincar\'e inequalities resp. logarithmic Sobolev inequalities are not sufficient. Instead, Sobolev inequalities resp. certain entropy-energy inequalities ensure the validity of a finite diameter. Speaking on the level of CD-inequalities this means that in the diffusive setting positive curvature and finite dimension suffices to deduce finite diameter bounds, while positive curvature alone does not. In this section we will be able to show that the $CD_\Upsilon$ condition behaves consistently in the discrete setting of Markov chains. 

Now, we recall the definitions of Lipschitz functions and the diameter.
\begin{defi}\label{def:Lipschitz}
A function $f\in \R^X$ is called Lipschitz function if $\Gamma(f)(x)$ exists at any $x \in X$ (in the sense that the sum in \eqref{eq:PsiH} with $H(r)=\frac{r^2}{2}$ is finite) and $\Vert f \Vert_{\mathrm{Lip}}:= \sqrt{\Vert \Gamma(f) \Vert_\infty} < \infty$. Moreover, we say that $f$ is $C$-Lipschitz, where $C>0$, when $\Vert f \Vert_{\mathrm{Lip}}\leq C$ holds true.
\end{defi}
\begin{defi}\label{def:diameter}
Considering the mapping
$\varrho: X \times X \to [0,\infty)$, given by
\begin{equation*}
\varrho (x,y) = \sup \{f(y)-f(x) : \Vert f \Vert_{Lip} \leq 1 \},
\end{equation*}
we define the diameter with respect to $L$ as $\mathrm{diam}_{\varrho}= \sup\limits_{x,y \in X}\varrho(x,y)$.
\end{defi}
These definitions have been used in the diffusive situation of \cite{BGL}, but also in the discrete setting in \cite{LMP}, where $\mathrm{diam}_\varrho$ has been called the resistance diameter. Definition \ref{def:diameter} is further closely related to the diameter with respect to the combinatorical graph distance on the underlying graph  to $L$. In fact, in \cite{LMP} it has been shown that the estimate
\begin{equation}\label{eq:diametercomparison}
\mathrm{dist}(x,y)\leq \sqrt{\frac{M_{1,\sup}}{2}}\varrho(x,y)
\end{equation}
holds true on locally finite graphs, provided that $M_{1,\sup}<\infty$, where $\mathrm{dist}:X\times X\to[0,\infty)$ denotes the combinatorical graph distance. We emphasize that the bound \eqref{eq:diametercomparison} extends to the case of locally infinite graphs with $M_{1,\sup}<\infty$ as the proof of \cite[Lemma 1.4]{LMP} holds verbatim. 

\begin{lemma}\label{lem:FisherLipschitz}
Let $f \in \R^X$ be a bounded $C$-Lipschitz function. Then we have that 
\begin{equation*}
\mathcal{I}(e^{sf}) \leq C^2\, s^2 \int_X e^{sf}\mathrm{d}\mu.
\end{equation*}
holds true for any $s\in \R$.
\end{lemma}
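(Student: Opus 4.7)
My plan is to expand the Fisher information by its definition in \eqref{eq:FisherInfo} with $f$ replaced by $e^{sf}$, using $\log e^{sf} = sf$, to obtain
\begin{equation*}
\mathcal{I}(e^{sf}) = \frac{s}{2}\sum_{x,y \in X} k(x,y)\pi(x) \bigl(e^{sf(y)}-e^{sf(x)}\bigr)\bigl(f(y)-f(x)\bigr).
\end{equation*}
Note that the summand is pointwise nonnegative regardless of the sign of $s$, since $(e^b-e^a)(b-a)\geq 0$ always, so boundedness of $f$ and the assumption $M_1\in\ell^1$ (which is implicit via \eqref{eq:M1assumption}) justify these manipulations and the use of Fubini.

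The key pointwise bound I plan to use is the elementary inequality
\begin{equation*}
(e^b-e^a)(b-a) \leq \tfrac{1}{2}(b-a)^2\bigl(e^a+e^b\bigr), \qquad a,b\in\R,
\end{equation*}
which follows from convexity of the exponential via the trapezoidal upper bound on $\int_a^b e^t\,\mathrm{d}t$ (and both sides flip sign symmetrically if $b<a$). Applying this with $a=sf(x)$ and $b=sf(y)$ yields
\begin{equation*}
\bigl(e^{sf(y)}-e^{sf(x)}\bigr)\cdot s\bigl(f(y)-f(x)\bigr) \leq \tfrac{1}{2}s^2\bigl(f(y)-f(x)\bigr)^2\bigl(e^{sf(x)}+e^{sf(y)}\bigr).
\end{equation*}

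Substituting this into the expression for $\mathcal{I}(e^{sf})$ splits the bound into two sums, one weighted by $e^{sf(x)}$ and one by $e^{sf(y)}$. I then invoke the detailed balance condition \eqref{eq:dbcondition}, $k(x,y)\pi(x)=k(y,x)\pi(y)$, and relabel $x\leftrightarrow y$ to see that the two sums are equal, producing
\begin{equation*}
\mathcal{I}(e^{sf}) \leq \frac{s^2}{2}\sum_{x,y\in X} k(x,y)\pi(x)\bigl(f(y)-f(x)\bigr)^2 e^{sf(x)} = s^2 \int_X e^{sf}\,\Gamma(f)\,\mathrm{d}\mu,
\end{equation*}
where in the final step I recognize the inner sum over $y$ as $2\Gamma(f)(x)$ by the definition $\Gamma(f)=\Psi_H(f)$ for $H(r)=\tfrac{1}{2}r^2$. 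The claim now follows by estimating $\Gamma(f)\leq \Vert\Gamma(f)\Vert_\infty\leq C^2$ pointwise. The only subtle step is the symmetrization via detailed balance, which crucially turns an asymmetric quadratic bound into a clean $\Gamma$-expression; beyond that the argument is entirely elementary and does not invoke the curvature-dimension condition.
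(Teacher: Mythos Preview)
Your argument is correct. Both your proof and the paper's reach the same intermediate bound $\mathcal{I}(e^{sf}) \leq s^2 \int_X e^{sf}\,\Gamma(f)\,\mathrm{d}\mu$, but via different elementary inequalities. The paper restricts the double sum to the asymmetric index set $A_{x,s}=\{y:s(f(y)-f(x))<0\}$ (using detailed balance to absorb the factor $\tfrac{1}{2}$ from $\Gamma$) and then applies the one-sided bound $|\tau|\geq |e^\tau-1|$ for $\tau\leq 0$. Your route avoids the asymmetric restriction entirely by using the symmetric trapezoidal estimate $(e^b-e^a)(b-a)\leq \tfrac{1}{2}(b-a)^2(e^a+e^b)$ and then symmetrizing the two resulting sums via detailed balance. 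Your approach is arguably more direct, since the pointwise inequality already has the right symmetry and no case distinction on the sign of $s(f(y)-f(x))$ is needed.

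One small remark: your parenthetical that $M_1\in\ell^1(\mu)$ is ``implicit via \eqref{eq:M1assumption}'' is not quite right---\eqref{eq:M1assumption} only gives pointwise finiteness of $M_1$, not integrability. This does not affect your proof, however, since every summand is nonnegative and the final upper bound $C^2 s^2\int_X e^{sf}\,\mathrm{d}\mu$ is finite (as $f$ is bounded and $\mu$ is a probability measure), so Tonelli suffices throughout.
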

\begin{proof}
Clearly, there is nothing to show in the case of $s=0$.
Note that the detailed balance condition implies that 
\begin{equation*}
k(x,y) s\big(f(y)-f(x)\big)\big(h(sf(y))-h(sf(x))\big)\pi(x)= k(y,x) s\big(f(x)-f(y)\big)\big(h(sf(x))-h(sf(y))\big)\pi(y)
\end{equation*}
holds for any $x,y\in X$ and $s\neq 0$, where $h:\R \to \R$ is some arbitrary function. Defining for $x\in X$ and $s\neq 0$ the set $A_{x,s}:= \{y \in X: s(f(y)-f(x))<0 \}$ , we then infer
\begin{align*}
C^2 s^2 \int_X e^{sf} \mathrm{d}\mu &\geq s^2 \int_X e^{sf}\Gamma(f) \mathrm{d}\mu \\
&= s^2 \sum_{x \in X} e^{sf(x)} \sum_{y \in A_{x,s}} k(x,y) \big(f(y)-f(x)\big)^2\pi(x)\\
&\geq \sum_{x \in X}e^{sf(x)} \sum_{y \in A_{x,s}}k(x,y) |s(f(y)-f(x))|\,|e^{s(f(y)-f(x))}-1|\,\pi(x)\\
&= \sum_{x \in X} e^{s f(x)} \sum_{y \in A_{x,s}} k(x,y) s\big(f(y)-f(x)\big)\big(e^{s(f(y)-f(x))}-1\big) \pi(x) \\
&= \sum_{x \in X} \sum_{y \in A_{x,s}}k(x,y)s\big(f(y)-f(x)\big)\big(e^{sf(y)}-e^{sf(x)}\big)\pi(x) \\
&= \mathcal{I}(e^{sf}).
\end{align*}
Here, we have applied the inequality $|\tau|\geq|e^\tau - 1|$, which is valid if $\tau\leq 0$.
\end{proof}
We now state the main result of this section.
\begin{theorem}\label{expintegrabilityforLipschitz}
Let $L$ satisfy $EI(\Phi)$ and let $f \in \ell^1(\mu)$ be $1$-Lipschitz. Further, we assume that $s\mapsto \frac{\Phi(s^2)}{s^2}$ is integrable at $0$. Then we have
\begin{equation}\label{eq:expintegrabilityLip}
\int_X e^{tf} \mathrm{d}\mu \leq \exp \Big( t \Big( \int_0^t \frac{\Phi(s^2)}{s^2}\mathrm{d}s + \int_X f \mathrm{d}\mu \Big) \Big),\,\, t \neq 0.
\end{equation}
If in addition $s\mapsto \frac{\Phi(s^2)}{s^2}$ is also integrable at $\infty$, then any Lipschitz function is bounded and it holds
\begin{equation}\label{eq:meanvalueestimate}
\Vert f - \int_X f \mathrm{d}\mu \Vert_\infty \leq \int_0^\infty \frac{\Phi(s^2)}{s^2}\mathrm{d}s
\end{equation}
 for any $1$-Lipschitz function $f$.\\
In particular, the diameter bound
 \begin{equation}\label{eq:Diameterbound}
\mathrm{diam_\varrho} \leq 2  \int_0^\infty \frac{\Phi(s^2)}{s^2}\mathrm{d}s
\end{equation}
is valid.
\end{theorem}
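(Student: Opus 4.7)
I plan to adapt the classical Herbst argument to the entropy--information setting. Assume first that $f \in \ell^\infty(X)$ is $1$-Lipschitz; the case $f \in \ell^1(\mu)$ will be recovered by truncation. Set $Z(t) = \int_X e^{tf}\,d\mu$, so that $Z(0)=1$, $Z'(t) = \int_X f e^{tf}\,d\mu$, and
\[
\mathrm{Ent}_\mu(e^{tf}) = t Z'(t) - Z(t)\log Z(t).
\]
Introduce $K(t) = t^{-1}\log Z(t)$, extended continuously at $t=0$ by $K(0) = \int_X f\,d\mu$. A direct calculation then gives
\[
K'(t) = \frac{\mathrm{Ent}_\mu(e^{tf})}{t^2\, Z(t)}, \qquad t \neq 0.
\]

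The key estimate comes from applying $EI(\Phi)$ to the probability density $e^{tf}/Z(t)\in \mathcal{P}_*(X)$. Using the scaling identities $\mathrm{Ent}_\mu(cg) = c\,\mathrm{Ent}_\mu(g)$ and $\mathcal{I}(cg) = c\,\mathcal{I}(g)$, together with Lemma \ref{lem:FisherLipschitz} (which gives $\mathcal{I}(e^{tf}) \leq t^2 Z(t)$ since $\Vert f\Vert_{\mathrm{Lip}}\leq 1$) and monotonicity of $\Phi$, we obtain
\[
\mathrm{Ent}_\mu(e^{tf}) \,\leq\, Z(t)\, \Phi\!\left(\tfrac{\mathcal{I}(e^{tf})}{Z(t)}\right) \,\leq\, Z(t)\,\Phi(t^2).
\]
This produces the differential inequality $K'(t) \leq \Phi(t^2)/t^2$. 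Integrating from $0$ to $t$ when $t>0$, and from $t$ to $0$ when $t<0$ (so that multiplying back by $t$ absorbs the sign correctly), yields in both cases
\[
t\, K(t) \,\leq\, t \int_X f\,d\mu + t \int_0^t \frac{\Phi(s^2)}{s^2}\,ds,
\]
which is precisely \eqref{eq:expintegrabilityLip} after exponentiation. The passage to general $f \in \ell^1(\mu)$ is standard: the truncations $f_M = \max(-M,\min(f,M))$ are bounded, still $1$-Lipschitz and converge pointwise to $f$; Fatou on the left and dominated convergence on the right (using $|f_M|\leq |f|\in \ell^1(\mu)$) transfer the bound to $f$.

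For the second assertion, set $C := \int_0^\infty \Phi(s^2)/s^2\,ds < \infty$ and let $\bar f_M := f_M - \int_X f_M\,d\mu$. Applying \eqref{eq:expintegrabilityLip} to $f_M$ for both signs of $t$ gives $\int_X e^{t \bar f_M}\,d\mu \leq e^{|t| C}$ for every $t\in\R$, since $t\int_0^t \Phi(s^2)/s^2\,ds = |t|\int_0^{|t|}\Phi(u^2)/u^2\,du \leq |t|C$. Bounding the integral below by a single term, $\pi(x_0)\, e^{t\bar f_M(x_0)} \leq e^{|t| C}$, and then sending $t\to +\infty$ (resp.\ $t\to -\infty$), forces $\bar f_M(x_0)\leq C$ (resp.\ $\geq -C$) at every $x_0\in X$. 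Hence $\Vert \bar f_M\Vert_\infty \leq C$ uniformly in $M$, which means $f$ has oscillation at most $2C$, is in particular bounded, and satisfies \eqref{eq:meanvalueestimate}. The diameter bound \eqref{eq:Diameterbound} then follows at once: for any $1$-Lipschitz $f$ and any $x,y\in X$, $f(y)-f(x) \leq 2 \Vert f - \int_X f\,d\mu\Vert_\infty \leq 2C$, and taking the supremum over such $f$ yields $\varrho(x,y)\leq 2C$.

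The principal delicate point is the careful sign bookkeeping when integrating the differential inequality for $t<0$: the integrand $\Phi(s^2)/s^2$ is non-negative and $\int_0^t$ is negative, yet the final bound must still read as in \eqref{eq:expintegrabilityLip}, which is accomplished by the identity $t\int_0^t \Phi(s^2)/s^2\,ds = |t|\int_0^{|t|} \Phi(u^2)/u^2\,du \geq 0$. Everything else reduces to standard manipulations once the central estimate $K'(t)\leq \Phi(t^2)/t^2$ is in hand.
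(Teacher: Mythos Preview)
Your proposal is correct and follows essentially the same Herbst-type argument as the paper: define $K(t)=t^{-1}\log Z(t)$, derive $K'(t)=\mathrm{Ent}_\mu(e^{tf})/(t^2 Z(t))$, bound this by $\Phi(t^2)/t^2$ via Lemma~\ref{lem:FisherLipschitz}, integrate, and then pass from bounded to $\ell^1(\mu)$ functions by truncation. The only cosmetic difference is that you apply $EI(\Phi)$ directly to the normalized density and use monotonicity of $\Phi$, whereas the paper goes through the linearized form \eqref{eq:EEIlinearizedform} and then optimizes by choosing $r=s^2$; both routes land on the identical inequality $K'(t)\le \Phi(t^2)/t^2$, and your handling of the second assertion via uniform bounds on the truncations is a mild streamlining of the paper's bootstrapping.
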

\begin{proof}
First, let $f \in \R^X$ be $1$-Lipschitz and bounded. In particular, by Lemma \ref{lem:FisherLipschitz}  we have that $\mathcal{I}(e^{sf})<\infty$ for any $s\in \R$. We set $Z(s)= \int_X e^{sf}\mathrm{d\mu}$ and observe that $Z'(s)= \int_X f e^{sf}\mathrm{d}\mu$. Further, we define  $\Lambda(s)=\frac{\log ( Z(s)) }{s}$, $s \neq 0$. Note that we can extend $\Lambda$ to a continuous function on $\R$ by means of L'Hospital's rule by $\Lambda(0)= \int_X f \mathrm{d}\mu$. For some fixed $s\neq 0$ we obtain 
\begin{align*}
\Lambda'(s) = \frac{s Z'(s)-Z(s) \log(Z(s))}{s^2 Z(s)} = \frac{\mathrm{Ent}_\mu(e^{sf})}{s^2 Z(s)}.
\end{align*}
Now, we apply $EI(\Phi)$ (in the form of \eqref{eq:EIPhinonnormalized}) to $e^{sf}$ and observe that
\begin{equation}\label{eq:LambdaderivativeforLipschitz}
\Lambda'(s) \leq \frac{\Phi'(r)\mathcal{I}(e^{sf})+\Theta(r)Z(s)}{s^2 Z(s)}
\end{equation}
holds for any $r \in (0,\infty)$. Due to $f$ being bounded and $1$-Lipschitz, Lemma \ref{lem:FisherLipschitz} translates to
\begin{equation*}
\mathcal{I}(e^{sf}) \leq s^2 Z(s).
\end{equation*}
With this at hand, we deduce from \eqref{eq:LambdaderivativeforLipschitz} that
\begin{equation*}
\Lambda'(s) \leq \frac{\Phi'(r)s^2 + \Theta(r)}{s^2} = \frac{\Phi(r)+\Phi'(r)(s^2 - r)}{s^2}
\end{equation*}
holds for any $r \in (0,\infty)$. Specifying $r=s^2$, we end up with
\begin{equation*}
\Lambda'(s) \leq \frac{\Phi(s^2)}{s^2}.
\end{equation*}
Integrating this yields
\begin{align*}
\Lambda(t) \leq \Lambda(0) + \int_0^t \frac{\Phi(s^2)}{s^2}\mathrm{d}s,
\end{align*}
when $t>0$ and with the reverse inequality in case that $t<0$. In both situations, \eqref{eq:expintegrabilityLip} follows from the definition of $\Lambda$ in the case that $f$ is bounded.

Regarding the general case, let $f\in \ell^1(\mu)$ be $1$-Lipschitz and consider  $f_N \in \ell^\infty(X)$, $N \in \N$, given by $f_N(x)=f(x)$ if $|f(x)|\leq N$, $f_N(x)=N$ if $f(x)>N$ and $f_N(x)=-N$ if $f(x)<-N$. Clearly, for any $x,y \in X$, we have $|f_N(x)-f_N(y)| \leq|f(x)-f(y)|$ and thus $f_N$ is $1$-Lipschitz for any $N \in \N$. As $f \in \ell^1(\mu)$, $\int_X f_N \mathrm{d}\mu$ converges to $\int_X f \mathrm{d}\mu$ as $N \to \infty$ by the dominated convergence theorem. Furthermore, Fatou's lemma implies
\begin{align*}
\int_X e^{sf}\mathrm{d}\mu \leq \liminf_{N \to \infty} \int_X e^{s f_N} \mathrm{d}\mu &\leq \liminf_{N \to \infty} \exp \Big( t \Big( \int_0^t \frac{\Phi(s^2)}{s^2}\mathrm{d}s + \int_X f_N \mathrm{d}\mu \Big) \Big) \\
&= \exp \Big( t \Big( \int_0^t \frac{\Phi(s^2)}{s^2}\mathrm{d}s + \int_X f \mathrm{d}\mu \Big) \Big).
\end{align*}

As a first step to establish the second claim, we note that if \eqref{eq:meanvalueestimate} is valid for bounded $1$-Lipschitz functions then any $1$-Lipschitz function is an element of $\ell^1(\mu)$. Indeed, let $f$ be a general $1$-Lipschitz function and denote by $\big( f_N\big)_{N \in \N}\subset \ell^\infty(X)$ the approximating sequence of $1$-Lipschitz functions as defined above. Then, choosing $x \in X$ arbitrary, we observe, since clearly $|f|$ is also $1$-Lipschitz, that
\begin{align*}
\big| |f (x)| - \int_X |f_N| \mathrm{d}\mu\, \big| \leq \big\Vert |f_N| - \int_X |f_N| \mathrm{d}\mu \,\big\Vert_\infty \leq \int_0^\infty
\frac{\Phi(s^2)}{s^2}\mathrm{d}s,
\end{align*}
where $N\geq N_0$ with $\N_0\in \N$ such that $|f(x)|\leq N_0$. Sending $N\to \infty$ in the latter estimation, we deduce that $f \in \ell^1(\mu)$.

We will now show that \eqref{eq:meanvalueestimate} holds for any $1$-Lipschitz function that is an element of $\ell^1(\mu)$ (and hence for any $1$-Lipschitz function).
We set $C= \int_0^\infty \frac{\Phi(s^2)}{s^2}\mathrm{d}s$ and apply \eqref{eq:expintegrabilityLip} to $f - \int_X f \mathrm{d}\mu$, assuming that $f\in \ell^1(\mu)$ is $1$-Lipschitz. We obtain for any $t>0$ that
\begin{equation}\label{eq:secondclaimDiameterbound}
\int_X \exp\Big(t \big( f - \int_X f \mathrm{d}\mu\big)\Big)\mathrm{d}\mu \leq \exp \big( t C\big)   .
\end{equation}
Now, we assume for contradiction that we can find some $x \in X$ and some $\varepsilon>0$ such that 
\begin{equation*}
f(x)-\int_X f \mathrm{d}\mu > C+\varepsilon.
\end{equation*} 
Then we have for $t>0$
\begin{align*}
\int_X \exp\Big(t \big( f - \int_X f \mathrm{d}\mu\big)\Big)\mathrm{d}\mu > e^{t(C+\varepsilon)}\pi(x), 
\end{align*}
which contradicts \eqref{eq:secondclaimDiameterbound} in the asymptotic behavior of $t \to \infty.$ Simultaneously, by considering the asymptotic behavior as $t \to -\infty$, one obtains that $f(x) - \int_X f \mathrm{d}\mu \geq - C$ holds for all $x \in X$. This establishes \eqref{eq:meanvalueestimate} for any $1$-Lipschitz function. From that we conclude
\begin{equation}\label{eq:lipschitzfxfybound}
f(y)-f(x) \leq 2 \Vert f - \int_X f \mathrm{d}\mu\Vert_\infty \leq 2 \int\limits_0^\infty \frac{\Phi(s^2)}{s^2}\mathrm{d}s.
\end{equation}
for any $x,y \in X$ and $1$-Lipschitz function $f$. But \eqref{eq:lipschitzfxfybound} implies that $f$ must be bounded. Hence we deduce by scaling that any Lipschitz function is bounded. Furthermore, by \eqref{eq:lipschitzfxfybound} and the definition of $\varrho$, we deduce \eqref{eq:Diameterbound}.
\end{proof}
Theorem \ref{expintegrabilityforLipschitz} yields finite bounds on $\mathrm{diam}_\varrho$ if $L$ satisfies $CD_\Upsilon(\kappa,F)$ with $\kappa>0$ and a power-type $CD$-function by means of Proposition \ref{prop:EIforpowertype}(ii). In the special case of the quadratic $CD$-function, we get the following bound.
\begin{corollary}\label{Diameterbound}
If $M_1 \in \ell^2(\mu)$, $M_2 \in \ell^1(\mu)$, the Markov chain generated by $L$ is positive recurrent and $L$ satisfies $CD_\Upsilon(\kappa,n)$ with $\kappa>0$ and $n<\infty$, then the diameter bound 
\begin{equation}\label{diameterboundunderCD}
\mathrm{diam}_\varrho\leq \pi\sqrt{\frac{n}{\kappa}}
\end{equation}
holds true.
\end{corollary}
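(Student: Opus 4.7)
The plan is to combine Corollary \ref{logEIunderCD(kappa,n)} and Theorem \ref{expintegrabilityforLipschitz} and then compute a single explicit integral. More precisely, the hypotheses of Corollary \ref{logEIunderCD(kappa,n)} are satisfied, so $L$ satisfies the entropy-information inequality $EI(\Phi)$ with the logarithmic growth function
\begin{equation*}
\Phi(r) = \frac{n}{2}\log\Bigl(1+\frac{r}{\kappa n}\Bigr), \quad r>0.
\end{equation*}
My first step would be to check the integrability hypothesis of Theorem \ref{expintegrabilityforLipschitz}. Near $s=0$ we have $\Phi(s^2)/s^2 \sim \frac{1}{2\kappa}$, so the integrand is bounded there; near $s=\infty$ we have $\Phi(s^2)/s^2 = O\bigl(s^{-2}\log s\bigr)$, which is integrable. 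Hence $s\mapsto \Phi(s^2)/s^2$ is integrable on $(0,\infty)$ and the diameter bound \eqref{eq:Diameterbound} of Theorem \ref{expintegrabilityforLipschitz} applies.

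The remaining step is therefore the explicit evaluation of $2\int_0^\infty \Phi(s^2)/s^2\,ds$ for this particular $\Phi$. Substituting $u = s/\sqrt{\kappa n}$ turns the integral into
\begin{equation*}
2\int_0^\infty \frac{\Phi(s^2)}{s^2}\,ds = n\int_0^\infty \frac{1}{s^2}\log\Bigl(1+\frac{s^2}{\kappa n}\Bigr)\,ds = \sqrt{\tfrac{n}{\kappa}}\int_0^\infty \frac{\log(1+u^2)}{u^2}\,du.
\end{equation*}
The last integral can be evaluated by a single integration by parts with $f(u)=\log(1+u^2)$ and $g'(u) = u^{-2}$: the boundary term vanishes at both endpoints (at $0$ because $\log(1+u^2)/u \sim u$, at $\infty$ because $\log(1+u^2)/u \to 0$), leaving
\begin{equation*}
\int_0^\infty \frac{\log(1+u^2)}{u^2}\,du = \int_0^\infty \frac{2}{1+u^2}\,du = \pi.
\end{equation*}
Combining the two displays yields exactly $\mathrm{diam}_\varrho \leq \pi\sqrt{n/\kappa}$.

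There is no real obstacle: the whole proof is a transparent application of two already-proved results followed by the elementary computation $\int_0^\infty u^{-2}\log(1+u^2)\,du = \pi$. The only mildly delicate point is justifying the vanishing of the boundary term at $0$ in the integration by parts, which is a one-line Taylor expansion. The intrinsic geometric content — the diameter bound — has already been pushed into the analytic result of Theorem \ref{expintegrabilityforLipschitz}, so the corollary amounts essentially to computing the precise value of a universal constant.
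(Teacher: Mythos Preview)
Your proposal is correct and follows exactly the paper's approach: combine Corollary \ref{logEIunderCD(kappa,n)} with Theorem \ref{expintegrabilityforLipschitz} and then evaluate the single integral $\frac{n}{2}\int_0^\infty s^{-2}\log(1+s^2/(\kappa n))\,\mathrm{d}s=\frac{\pi}{2}\sqrt{n/\kappa}$. The only difference is that you spell out the integration-by-parts computation of the integral, whereas the paper simply declares it an elementary calculation.
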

\begin{proof} 
By elementary methods one calculates the integral
\begin{equation*}
\frac{n}{2}\int_0^\infty \frac{\log\big(1+\frac{s^2}{\kappa n}\big)}{s^2}\mathrm{d}s = \frac{\pi}{2}\sqrt{\frac{n}{\kappa}}.
\end{equation*}
The claim follows by combining Corollary \ref{logEIunderCD(kappa,n)} with Theorem \ref{expintegrabilityforLipschitz}.
\end{proof}
\begin{remark}\label{rem:LMPcomparision}
By \eqref{diameterboundunderCD} we recover (by different methods) exactly the same diameter bound as in \cite{LMP}, where there it is assumed on the one hand only $CD(\kappa,n)$ but on the other hand that the underlying graph to $L$ is locally finite and satisfies the completeness assumption and non-degeneracy of the vertex measure. Note that by \cite[Theorem 2.2]{shuLiu} the latter boils down to the case of finite graphs since $\kappa>0$. Hence, although the curvature-dimension condition  of Corollary \ref{Diameterbound} is more restrictive, the setting where it applies can be expected to be more general compared to the one of \cite{LMP}.
\end{remark}
With the following example we aim to emphasize that the property that Lipschitz functions are bounded is quite strong in the sense that it fails for a large class of examples that all satisfy corresponding modified logarithmic Sobolev inequalities. In particular, it turns out that $CD_\Upsilon(\kappa,\infty)$, with $\kappa>0$, is not sufficient for deducing a finite diameter bound.
\begin{example}
We consider a birth-death process on $\N_0$ as in Example \ref{ex:birthdeath} and employ the notation that has been used therein. In particular, we assume that the rate functions $a$ and $b$ are monotone as in Example \ref{ex:birthdeath} and that condition \eqref{eq:Caputoassumption} holds for some $\kappa>0$. We set $f(0)=0$ and define the sequence of partial sums
\begin{equation}\label{eq:bdprocesspartialsums}
f(n)=\sum_{k=1}^n \frac{1}{\sqrt{b(k)}}, \, n \in \N.
\end{equation}
We claim that Lipschitz functions are bounded if and only if the partial sums given by \eqref{eq:bdprocesspartialsums} converge.

First, we assume that the partial sums given by \eqref{eq:bdprocesspartialsums} diverge as $n \to \infty$. Then, we have
\begin{equation*}
2\Gamma (f) (n) = a(n) (f(n+1)-f(n))^2 + b(n)(f(n-1)-f(n))^2 = \frac{a(n)}{b(n+1)} + 1
\end{equation*}
for any $n \in \N$. From the monotonicity assumption on the rates we infer that $\Gamma(f)$ is bounded, or in other words that $f$ is a Lipschitz function. But apparently, $f$ is unbounded. This yields that the corresponding generator does not satisfy an entropy-information inequality with growth function $\Phi$ such that $\int_0^\infty \frac{\Phi(s^2)}{s^2}\mathrm{d}s<\infty$. On the other hand, it is known by \cite{CDP} that $L$ satisfies a corresponding modified logarithmic Sobolev inequality. Moreover, we emphasize that among those birth-death processes of the present example are also processes that even satisfy the condition $CD_\Upsilon(\kappa,\infty)$, see \cite{WZ}. This shows that the condition $CD_\Upsilon(\kappa,\infty)$ with $\kappa>0$ is in general  not sufficient to obtain a finite diameter. The latter finding is consistent to the Bakry-\'Emery condition in the diffusive setting.

Now, let us assume conversely that the partial sums given by \eqref{eq:bdprocesspartialsums} converge and let $g$ be $C$-Lipschitz for some $C>0$. In particular, 
\begin{equation*}
2 b(n) \big(g(n-1)-g(n)\big)^2 \leq C^2
\end{equation*}
holds for any $n \in \N$. Consequently, we have $|g(n-1)-g(n)|\leq \frac{C}{\sqrt{2 b(n)}}$, $n \in \N$, and by the triangle inequality we deduce
\begin{equation*}
|g(N)-g(0)|\leq \frac{C}{\sqrt{2}} f(N)\leq \frac{C}{\sqrt{2}} \sum_{n \in \N}\frac{1}{\sqrt{b(n)}}
\end{equation*}
for any $N\in \N$, which yields that $g$ is bounded.
\end{example}

\section{Modified Nash inequalities}\label{sec:Nash}
In the diffusive setting, logarithmic entropy-energy inequalities are known to be equivalent to Nash inequalities. Regarding the discrete setting of Markov chains we refer to \cite{DiSC}, and also \cite{SC}, for an extensive account on Nash inequalities. Clearly,  it can not be expected that logarithmic entropy-information inequalities  are linked to the classical Nash inequality in the discrete setting as they are in the diffusive setting by the lack of chain rule. We refer to the natural analogue as the modified Nash inequality, which can be induced from corresponding logarithmic entropy-information inequality as will be shown subsequently. We say that a function $f\in \R^X$ is non-vanishing if $f(x)\neq 0$ for any $x \in X$.
\begin{theorem}\label{modNash}
If $L$ satisfies $EI(\Phi)$ with $\Phi(r)= \alpha \log \big( A + \frac{r}{\beta}\big) $ and $\alpha,\beta>0$, $A\geq 1$, then the following modified Nash inequality
\begin{equation}\label{eq:Nash}
\Vert f \Vert_2^{2\alpha+2} \leq \Big( A\Vert f \Vert_2^2 + \frac{\mathcal{I}(f^2)}{\beta}\Big)^{\alpha} \Vert f \Vert_1^2
\end{equation}
holds for any non-vanishing $f \in \ell^2(\mu)$.
\end{theorem}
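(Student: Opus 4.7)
The plan is to apply $EI(\Phi)$ to the probability density $f^2/\Vert f\Vert_2^2$ and bracket $\mathrm{Ent}_\mu(f^2)$ from below by a Jensen-type bound involving $\Vert f\Vert_1$ and $\Vert f\Vert_2$. Since $f$ is non-vanishing we have $f^2/\Vert f\Vert_2^2 \in \mathcal{P}_*(X)$, and the scaling identities \eqref{eq:scalingEntro}--\eqref{eq:scalingFisher} turn $EI(\Phi)$ into
\[
\mathrm{Ent}_\mu(f^2) \,\leq\, \alpha\, \Vert f\Vert_2^2 \log\!\left(A+\frac{\mathcal{I}(f^2)}{\beta\, \Vert f\Vert_2^2}\right).
\]

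For the matching lower bound I would invoke Jensen's inequality for the concave function $\log$ against the probability measure $(f^2/\Vert f\Vert_2^2)\,d\mu$ (note that $f \in \ell^2(\mu)\subset \ell^1(\mu)$ since $\mu$ is a probability measure, so the integral on the right-hand side below is finite):
\[
\int_X \frac{f^2}{\Vert f\Vert_2^2}\log \frac{1}{|f|}\, d\mu \,\leq\, \log \int_X \frac{|f|}{\Vert f\Vert_2^2}\,d\mu \,=\, \log \frac{\Vert f\Vert_1}{\Vert f\Vert_2^2}.
\]
Multiplying by $-2\Vert f\Vert_2^2$ and subtracting $\Vert f\Vert_2^2 \log \Vert f\Vert_2^2$ (using $\int f^2 \log f^2\, d\mu = 2\int f^2 \log |f|\, d\mu$) rearranges to
\[
\mathrm{Ent}_\mu(f^2) \,\geq\, \Vert f\Vert_2^2 \log \frac{\Vert f\Vert_2^2}{\Vert f\Vert_1^2}.
\]

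Chaining both bounds and dividing through by $\Vert f\Vert_2^2>0$ gives
\[
\log \frac{\Vert f\Vert_2^2}{\Vert f\Vert_1^2} \,\leq\, \alpha \log\!\left(A+\frac{\mathcal{I}(f^2)}{\beta\, \Vert f\Vert_2^2}\right),
\]
and exponentiating, then multiplying through by $\Vert f\Vert_1^2 \,\Vert f\Vert_2^{2\alpha}$, yields precisely \eqref{eq:Nash}.

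The main technical point is not the algebra but verifying that $EI(\Phi)$ can be invoked, i.e.\ handling the degenerate regularity of $f^2$: if $\mathcal{I}(f^2)=\infty$ then the right-hand side of \eqref{eq:Nash} is infinite and the inequality is vacuous; if $\mathrm{Ent}_\mu(f^2)=\infty$ while $\mathcal{I}(f^2)<\infty$, a standard truncation $f \mapsto (f\wedge N)\vee(-N)$ together with monotone/dominated convergence on both sides of \eqref{eq:Nash} closes the gap. Once these regularity issues are dispatched, the proof reduces to the two-sided bracketing of $\mathrm{Ent}_\mu(f^2)$ displayed above.
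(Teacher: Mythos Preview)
Your proof is correct and follows essentially the same approach as the paper: both bracket $\mathrm{Ent}_\mu(f^2)$ from above via $EI(\Phi)$ applied to the normalized density $f^2/\Vert f\Vert_2^2$ and from below by $\Vert f\Vert_2^2\log(\Vert f\Vert_2^2/\Vert f\Vert_1^2)$, then combine and handle general $f\in\ell^2(\mu)$ by the same truncation. The only cosmetic difference is that you obtain the lower bound by a direct Jensen inequality with respect to the probability measure $(f^2/\Vert f\Vert_2^2)\,d\mu$, whereas the paper phrases it as convexity of $r\mapsto\log\Vert f\Vert_{1/r}$ and evaluates the tangent at $r=\tfrac12$; these are the same inequality in different clothing.
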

\begin{proof}
Clearly, we can assume that $\mathcal{I}(f^2)<\infty$. Further, it suffices to prove \eqref{eq:Nash} for bounded non-vanishing functions by a standard truncation argument. Indeed, let $(f_N)_{N \in \N}$ denote the sequence of bounded functions that has been considered in the proof of Theorem \ref{expintegrabilityforLipschitz}. Then it follows readily by means of the monotone convergence theorem that $\Vert f_N \Vert_2 \to \Vert f \Vert_2$, $\Vert f_N \Vert_1 \to \Vert f \Vert_1$ and $\mathcal{I}(f_N^2) \to \mathcal{I}(f^2)$ as $N\to \infty$.

It is a well known consequence of H\"older's inequality that the mapping $r\mapsto \Vert f \Vert_{\frac{1}{r}}$, $r \in (0,1]$ is log-convex.
Then, for  a given non-vanishing $f \in \ell^\infty(X)$ with $\Vert f \Vert_2 = 1$,  we consider  the convex mapping $\Lambda(r)= \log \Vert f \Vert_{\frac{1}{r}}$, $r \in (0,1]$, which is well defined since $\mu$ is a probability measure. One readily verifies by a similar calculation as in the proof of Lemma \ref{lem:BoTeLemma} that 
\begin{equation*}
\Lambda'(r)=- \frac{\mathrm{Ent}_\mu(|f|^{\frac{1}{r}})}{\int_X |f|^\frac{1}{r}\mathrm{d}\mu},
\end{equation*}
where we use that $|f|$ is bounded in order to interchange differentation and integration. In particular, using $\Vert f \Vert_2 = 1$, we observe that
 $\Lambda'\big(\frac{1}{2}\big)=- \mathrm{Ent}_\mu(f^2)$. By convexity, we thus have
\begin{equation*}
2 \big( \Lambda(1)-\Lambda\big(\frac{1}{2}\big)\big) \geq \Lambda'\big(\frac{1}{2}\big).
\end{equation*}
Consequently, by the entropy-information inequality $EI(\Phi)$ and the fact that $\Lambda\big(\frac{1}{2}\big)=0$, we observe
\begin{align*}
\log  \frac{1}{\Vert f \Vert_1^2}\leq \mathrm{Ent}_\mu(f^2) \leq \log \Big( A + \frac{\mathcal{I}(f^2)}{\beta} \Big)^\alpha,
\end{align*}
which implies
\begin{equation}\label{eq:Nashfornormalizedcase}
1 \leq \big(A+ \frac{\mathcal{I}(f^2)}{\beta}\big)^{\alpha} \Vert f \Vert_1^2\,.
\end{equation}
Now, for the non-normalized case we apply \eqref{eq:Nashfornormalizedcase} to $\frac{f}{\Vert f \Vert_2}$. By the scaling behavior of the Fisher information (cf. \eqref{eq:scalingFisher}), we deduce 
\begin{align*}
\Vert f \Vert_2^2 \leq \frac{1}{\Vert f \Vert_2^{2\alpha}} \Big( A\Vert f \Vert_2^2 + \frac{\mathcal{I}(f^2)}{\beta} \Big)^{\alpha} \Vert f \Vert_1^2,
\end{align*}
from which the claim follows.
\end{proof}
Combining Corollary \ref{logEIunderCD(kappa,n)} with Theorem \ref{modNash}, we observe the following result.
\begin{corollary}\label{modNashunderCD}
If $M_1 \in \ell^2(\mu)$, $M_2 \in \ell^1(\mu)$, the Markov chain generated by $L$ is positive recurrent and $L$ satisfies $CD_\Upsilon(\kappa,n)$ with $\kappa>0$ and $n<\infty$, then $L$ satisfies the following modified Nash inequality
\begin{equation*}
\Vert f \Vert_2^{n+2} \leq \Big( \Vert f \Vert_2^2 + \frac{\mathcal{I}(f^2)}{\kappa n}\Big)^{\frac{n}{2}} \Vert f \Vert_1^2
\end{equation*}
for any non-vanishing $f \in \ell^2(\mu)$.
\end{corollary}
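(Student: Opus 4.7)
The plan is to obtain Corollary \ref{modNashunderCD} as a direct consequence of combining the two preceding results, Corollary \ref{logEIunderCD(kappa,n)} and Theorem \ref{modNash}. Since all the analytic work has been done in those two statements, what remains is essentially a matching of parameters, and I do not expect any genuine obstacle.

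First, I would invoke Corollary \ref{logEIunderCD(kappa,n)}: under the hypotheses $M_1\in\ell^2(\mu)$, $M_2\in\ell^1(\mu)$, positive recurrence and $CD_\Upsilon(\kappa,n)$ with $\kappa>0$ and $n<\infty$, the generator $L$ satisfies $EI(\Phi)$ with the logarithmic growth function
\begin{equation*}
\Phi(r)=\frac{n}{2}\log\!\Big(1+\frac{r}{\kappa n}\Big),\qquad r>0.
\end{equation*}

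Second, I would match this $\Phi$ against the form required by Theorem \ref{modNash}, namely $\Phi(r)=\alpha\log\!\bigl(A+\tfrac{r}{\beta}\bigr)$ with $\alpha,\beta>0$ and $A\geq 1$. The choice $\alpha=\tfrac{n}{2}$, $A=1$, $\beta=\kappa n$ reproduces exactly the growth function from Corollary \ref{logEIunderCD(kappa,n)} and satisfies all structural hypotheses of Theorem \ref{modNash}. Substituting these values into the inequality \eqref{eq:Nash} yields, for any non-vanishing $f\in\ell^2(\mu)$,
\begin{equation*}
\Vert f\Vert_2^{2\alpha+2}=\Vert f\Vert_2^{n+2}\leq\Big(A\Vert f\Vert_2^2+\frac{\mathcal{I}(f^2)}{\beta}\Big)^{\!\alpha}\Vert f\Vert_1^2=\Big(\Vert f\Vert_2^2+\frac{\mathcal{I}(f^2)}{\kappa n}\Big)^{\!\frac{n}{2}}\Vert f\Vert_1^2,
\end{equation*}
which is precisely the claimed modified Nash inequality.

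Since the reduction to the two cited results is immediate, there is essentially no technical difficulty to overcome here; the only point worth double-checking is that the logarithmic growth function from Corollary \ref{logEIunderCD(kappa,n)} genuinely falls into the parameter family treated in Theorem \ref{modNash} (which it does, with $A=1$ being admissible as $A\geq 1$ is allowed), so that no additional argument such as a truncation or approximation step is needed beyond the one already carried out inside the proof of Theorem \ref{modNash}.
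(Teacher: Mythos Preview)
Your proposal is correct and matches the paper's own approach exactly: the paper simply states that the corollary follows by combining Corollary~\ref{logEIunderCD(kappa,n)} with Theorem~\ref{modNash}, and your parameter identification $\alpha=\tfrac{n}{2}$, $A=1$, $\beta=\kappa n$ is precisely the intended substitution.
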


\appendix
\section{Auxiliary Lemma}
In this section we provide an auxiliary result, which has been used to investigate the example of the two-point space in Example \ref{ex:2point}. For further properties of the functions $\nu_{c,d}:\R \to \R$, given by
\begin{equation*}
\nu_{c,d}(r)=c \Upsilon'(r)r+ \Upsilon(-r)-d \Upsilon(r), \, c,d \in \R,
\end{equation*}
which also have been used throughout Section \ref{sec:CDcondition}, we refer to the Appendix of \cite{WZ}.
\begin{lemma}\label{nulambdaconvex}
$\nu_{1+\lambda,\lambda}$ is strictly convex for any $\lambda \in (0,1]$.
\end{lemma}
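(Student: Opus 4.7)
The plan is to show directly that the second derivative of $\nu_{1+\lambda,\lambda}$ is strictly positive on all of $\R$. First I would differentiate twice using $\Upsilon(r)=e^r-r-1$, $\Upsilon'(r)=e^r-1$, $\Upsilon''(r)=e^r$. A direct computation of
$$\nu_{c,d}(r)=c(e^r-1)r+e^{-r}+r-1-d(e^r-r-1)$$
and two differentiations give
$$\nu''_{c,d}(r)=(2c+cr-d)e^r+e^{-r},$$
so that with $c=1+\lambda$, $d=\lambda$,
$$\nu''_{1+\lambda,\lambda}(r)=\bigl(2+\lambda+(1+\lambda)r\bigr)e^r+e^{-r}.$$

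Since $e^{-r}>0$, showing $\nu''_{1+\lambda,\lambda}(r)>0$ is equivalent (after multiplication by $e^{-r}$) to $g(r)>0$ for every $r \in \R$, where
$$g(r):=e^{-2r}+(1+\lambda)r+2+\lambda.$$
The function $g$ is strictly convex, as $g''(r)=4e^{-2r}>0$. Hence it possesses a unique global minimum at the critical point $r_0$ determined by $g'(r_0)=-2e^{-2r_0}+(1+\lambda)=0$, giving $e^{-2r_0}=\frac{1+\lambda}{2}$ and therefore $r_0=-\frac{1}{2}\log\frac{1+\lambda}{2}$. Observe that $r_0\geq 0$ because $\lambda\leq 1$ forces $\frac{1+\lambda}{2}\leq 1$.

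Evaluating $g$ at the minimum and using the relation $(1+\lambda)r_0=-\frac{1+\lambda}{2}\log\frac{1+\lambda}{2}$ yields
$$g(r_0)=\frac{1+\lambda}{2}\Bigl(1-\log\frac{1+\lambda}{2}\Bigr)+2+\lambda.$$
Because $\lambda\in(0,1]$ implies $\frac{1+\lambda}{2}\in(\frac{1}{2},1]$, we have $\log\frac{1+\lambda}{2}\leq 0$, so both summands are strictly positive and $g(r_0)>2+\lambda>0$. Consequently $g(r)\geq g(r_0)>0$ for every $r\in\R$, which translates back to $\nu''_{1+\lambda,\lambda}(r)>0$ on $\R$ and establishes strict convexity of $\nu_{1+\lambda,\lambda}$.

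There is no real obstacle in this proof: the only point that deserves attention is verifying that the minimum $r_0$ lies in $[0,\infty)$ precisely when $\lambda\leq 1$, which is why the range $\lambda\in(0,1]$ is the natural one. For $\lambda>1$, the critical point would slip into $(-\infty,0)$ and the sign of $g(r_0)$ would have to be reanalyzed, so the hypothesis $\lambda\leq 1$ enters in the cleanest possible way through positivity of $1-\log\frac{1+\lambda}{2}$.
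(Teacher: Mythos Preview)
Your proof is correct. Both you and the paper show $\nu''_{1+\lambda,\lambda}>0$ by locating its global minimum and checking positivity there, but the routes differ. The paper works with $\nu''$ directly: it computes $\nu'''$, argues that the equation $\nu'''(r)=0$ has a unique root $r_*$ (not available in closed form), rewrites $\nu''(r_*)$ using the implicit relation $e^{2r_*}((1+\lambda)r_*+3+2\lambda)=1$, and then shows $r_*>\bar r:=-\frac{5+3\lambda}{2(1+\lambda)}$ by checking $\nu'''(\bar r)<0$. Your trick of multiplying $\nu''$ by $e^{-r}$ and passing to $g(r)=e^{-2r}+(1+\lambda)r+2+\lambda$ is a genuine simplification: $g$ is manifestly convex and its minimiser $r_0$ is explicit, so the positivity of the minimum reduces to the elementary estimate $g(r_0)=\frac{1+\lambda}{2}\bigl(1-\log\frac{1+\lambda}{2}\bigr)+2+\lambda>0$. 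The paper's argument is slightly more robust in that it never needs $r_*$ explicitly, but yours is shorter and more transparent. One minor remark: in your closing paragraph the condition $\lambda\le 1$ enters through $\log\frac{1+\lambda}{2}\le 0$ rather than through $r_0\ge 0$ as such (these are equivalent), and your formula $g(r_0)=3x-x\log x+1$ with $x=\frac{1+\lambda}{2}$ actually stays positive well beyond $x=1$, so the range $\lambda\in(0,1]$ is not the sharp one for this argument; it is simply sufficient.
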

\begin{proof}
We have
\begin{align*}
\nu_{1+\lambda,\lambda}''(r)&=e^r\big( (1+\lambda)r + 2 + \lambda\big) +e^{-r},\\
\nu_{1+\lambda,\lambda}'''(r)&= e^r\big( (1+\lambda)r+3+2\lambda \big) -e^{-r}.
\end{align*}
Hence, $\nu_{1+\lambda,\lambda}'''(r)=0$ if and only if
\begin{equation}\label{eq:nulambdathird}
e^{2r}\big( (1+\lambda)r+3+2\lambda\big) = 1,
\end{equation}
which yields by monotonicity that $\nu_{1+\lambda,\lambda}''$ has a unique critical point $r_*$ for which we have $r_* > - \frac{3+2\lambda}{1+\lambda}$. Then we can reformulate \eqref{eq:nulambdathird} by applying the logarithm into
\begin{equation}\label{eq:rstaridentity}
r_* = \log \big( \frac{1}{\sqrt{(1+\lambda)r_*+3+2\lambda}}\big).
\end{equation}
We infer that
\begin{equation*}
\nu_{1+\lambda,\lambda}''(r_*) = \frac{(1+\lambda)r_* + 2 + \lambda}{\sqrt{(1+\lambda)r_* +3 + 2\lambda}} + \sqrt{(1+\lambda)r_* +3 + 2\lambda} \, ,
\end{equation*}
which is positive if and only if 
\begin{equation*}
r_* > - \frac{5+3\lambda}{2(1+\lambda)}.
\end{equation*}
Defining $\bar{r}:=- \frac{5+3\lambda}{2(1+\lambda)}$, we now observe that 
\begin{align*}
e^{2\bar{r}} < 1 \leq \frac{2}{\lambda + 1} = \frac{1}{(1+\lambda)\bar{r}+3+2\lambda}.
\end{align*}
In particular, this implies that $\nu_{1+\lambda,\lambda}'''(\bar{r})< 0$ and hence we have $r_* > \bar{r}$ by monotonicity, which establishes the claim.
\end{proof}

\end{document}